\numberwithin{equation}{section}
\theoremstyle{definition}
\newtheorem{defn}{Definition}[section]
\theoremstyle{remark}
\newtheorem{rmk}[defn]{Remark}
\theoremstyle{plain}
\newtheorem{prop}[defn]{Proposition}
\newtheorem{cor}[defn]{Corollary}
\newtheorem{thm}[defn]{Theorem}
\newtheorem{lem}[defn]{Lemma}
\renewcommand{\Re}{\mathrm{Re}}
\renewcommand{\Im}{\mathrm{Im}}
\renewcommand{\epsilon}{\varepsilon}
\title[Propagation of singularities for non homogeneous systems]{An FBI characterization for Gevrey vectors on hypo-analytic structures and propagation of Gevrey singularities}
\author{N. Braun Rodrigues }
\date{March 2020}
\begin{document}

\maketitle

\begin{abstract}
    In this work we prove a FBI characterization for Gevrey vectors on hypo-analytic structures, and we analyze the main differences of Gevrey regularity and hypo-analyticity concerning the FBI transform. We end with an application of this characterization on a propagation of Gevrey singularities result, for solutions of the non-homogeneous system associated with the hypo-analytic structure, for analytic structures of tube type. 
\end{abstract}

\section{Introduction}

In 1983 N. Hanges and F. Treves proved in \cite{hanges92} that on CR (embedded) manifolds, holomorphic extendability for CR functions propagates along connected complex submanifolds. They actually proved their result on the set up of hypo-analytic structures, introduced by  M.S. Baouendi, C.H. Chang, and F. Treves in \cite{baouendi1983microlocal}, and they proved that hypo-analytic singularity of solutions propagates along connected elliptic submanifolds. Propagation of holomorphic extendability is widely studied in the context of CR geometry, for instance \cite{trepreau1990propagation}, \cite{baouendi1984holomorphic} and \cite{tumanov1997propagation}. Now for Gevrey regularity little is known concerning propagation of singularities on hypo-analytic structures. In 2000 P. Caetano started the study of Gevrey vectors on hypo-analytic structures of maximum codimension in his Ph.D dissertation (\cite{caetano2000classes}), and his work was continued in \cite{caetano2011gevrey} and \cite{ragognette2019ultradifferential}, but their aim was solvability questions for the associate differential complex. Our goal here is to initiate the study of regularity problems on these structures, for instance, propagation of Gevrey singularities. 

A very useful tool in the study of propagation of singularities is the FBI transform. Also in \cite{baouendi1983microlocal} the authors proved that the decay of the FBI transform can be used to characterize hypo-analyticity. The usual characterization of analytic regularity, Gevrey regularity (ultradifferential regularity) and smooth regularity of distributions on $\mathbb{R}^N$ by the decay of the FBI transform differs from one another by the type of their decay. Loosely speaking, a distribution $u$ is analytic at $x_0$ if 

\begin{equation*}
    |\mathfrak{F}[\chi u](x,\xi)|\leq Ce^{-\epsilon|\xi|},
\end{equation*}

\noindent for all $x$ in some neigborhood of $x_0$, all $\xi\in\mathbb{R}^N$, and for some positive constants $C,\epsilon$, where $\chi$ is a test function supported in some open neighborhood of $x_0$, and $\mathfrak{F}[\chi u](x,\xi)$ is the FBI transform of $\chi u$. Now $u$ is Gevrey at $x_0$ if 

\begin{equation*}
    |\mathfrak{F}[\chi u](x,\xi)|\leq Ce^{-\epsilon|\xi|^{\frac{1}{s}}},
\end{equation*}

\noindent for all $x$ in some neighborhood of $x_0$, all $\xi\in\mathbb{R}^N$, and for some positive constants $C,\epsilon$. So the difference between analytic regularity and Gevrey regularity in this context is the type of the bound. On hypo-analytic structures there is an additional difficulty that arises from its complex nature which remains unseen when dealing with analytic regularity\footnote{In the context of hypo-analytic structures, by analytic regularity we mean hypo-analyticity}.  

For simplicity let $M\subset\mathbb{C}^N$ be a smooth generic CR submanifold of codimension $d$, so the CR dimension of $M$ is $n=N-d$, and let $p$ be an arbitrary point of $M$. Therefore there are $\mathrm{L}_1,\dots,\mathrm{L}_n$ anti-holomorphic vector fields tangent to $M$ on a neighborhood of $p$, and real vector fields $\mathrm{T}_1,\dots,\mathrm{T}_d$ tangent to $M$ on a neighborhood of $p$ such that $\{\mathrm{L}_1,\dots,\mathrm{L}_n,\overline{\mathrm{L}_1},\dots,\overline{\mathrm{L}_n},\mathrm{T}_1,\dots,\mathrm{T}_d\}$ span the complexified tangent bundle of $M$ on a neighborhood of $p$. In this set up our main theorem states that for a CR function $u$ to be a Gevrey vector for $\overline{\mathrm{L}_1},\dots,\overline{\mathrm{L}_n},\mathrm{T}_1,\dots,\mathrm{T}_d$ it is necessary and sufficient that its FBI transform has the same bound as in the $\mathbb{R}^N$ scenario, but only for points on the so-called real structure bundle, which is a real subbundle of $(\mathrm{T}^{0,1}M)^\perp$ (cf. section $2.3.$). Here one might notice that we are not asking any additional regularity on the CR structure.

Our initial goal was to investigate the validity of the propagation of singularities result proved in \cite{hanges92} for Gevrey regularity. One difficulty is that this result is deeply based on holomorphic function theory, which is not available for the Gevrey case. One of the drawbacks, when using the same techniques (the FBI approach), is that in our case we need some sort of folliation near the "propagator", an unnecessary assumption on \cite{hanges92}. On the other hand, this technique allows us to consider solutions of the non-homogeneous system, which makes sense in the Gevrey scenario.

This paper is organized as follows: In the first section we discuss what is needed from locally integrable structures theory, and hypo-analytic structures theory. Then in section \ref{section:Gevrey} we prove a FBI characterization for Gevery vectors, and in the last section we prove (and give some examples) a propagation of Gevrey singularities result on hypo-analytic structures of tube type.

This work contains the results obtained by the author on his Ph.D. dissertation.  

\section{Preliminaries}

\subsection{Locally integrable structures}

Let $\Omega\subset\mathbb{R}^N$ be an open set. By a locally integrable structure on $\Omega$ we mean a complex vector bundle $\mathcal{V}\subset\mathbb{C}\mathrm{T}\Omega$, such that $[\mathcal{V},\mathcal{V}]\subset\mathcal{V}$, and at every point $p\in\Omega$ there are $Z_1,\dots,Z_m$, smooth, complex-valued functions in some open neighborhood of $p$ in $\Omega$, such that
\begin{equation*}
\begin{cases}
    \mathrm{d}Z_1\wedge\cdots\wedge\mathrm{d}Z_m\neq 0;\\
    \mathrm{L}Z_j=0,\quad\forall \,\mathrm{L}\in\mathcal{V},\;j=1,\dots,m.
\end{cases}
\end{equation*}

\noindent We denote by $\mathrm{T}^\prime\subset\mathbb{C}\mathrm{T}^\ast\Omega$ the orthogonal bundle, with respect to the duality between forms and vectors, of the bundle $\mathcal{V}$. Let $p$ be an arbitrary point at $\Omega$. Then there exist a local coordinate system vanishing at $p$ on some open set $U=V\times W$, $(x_1,\dots, x_m,t_1,\dots, t_n)$, and smooth, real-valued functions $\phi_1,\dots, \phi_m$, defined on $U$ and satisfying $\phi(0)=0$ and $\mathrm{d}_x\phi(0)=0$, such that the differentials of the functions

\begin{equation}\label{eq:Z}
    Z_k(x,t)\doteq x_k+i\phi_k(x,t),\quad k=1,\dots,m,
\end{equation}

\noindent span $\mathrm{T}^\prime$ in $U$. There are also linear independent, pairwise commuting, complex vector fields:

\begin{equation*}
    \mathrm{M}_j=\sum_{k=1}^m a_{j,k}(x,t)\frac{\partial}{\partial x_k},\quad j=1,\dots, m,
\end{equation*}

\noindent and

\begin{equation*}
    \mathrm{L}_j=\frac{\partial}{\partial t_j}-i\sum_{k=1}^m\frac{\partial\phi_k}{\partial t_j}(x,t)\mathrm{M}_k,\quad j=1,\dots,n,
\end{equation*}

\noindent satisfying the relations

\begin{center}
\begin{tabular}{ll}
    $\mathrm{L}_j Z_k=0$  & $\mathrm{M}_l Z_k = \delta_{l,k}$\\
   $\mathrm{L}_j t_i = \delta_{j,i}$  & $\mathrm{M}_l t_i=0$.
\end{tabular}
\end{center}

\noindent  Now let $u$ be a distribution on $U$ such that $\mathrm{L}_ju\in\mathcal{C}^\infty(U)$, for $j=1,\dots, n$, then actually $u\in\mathcal{C}^\infty(W;\mathcal{D}^\prime(V))$ (see proof of Proposition I.$4.3$ of \cite{trevesbook} with minor modifications). By an uniform boundedness principle argument we have that for every compact sets $K_1\Subset V$, and $K_2\Subset W$, there exist a constant $C>0$ and an integer $q>0$ such that

\begin{equation}\label{q:uniform-cont-distribution-solution}
    \left|\langle u(x,t), \phi(x) \rangle\right|\leq C\sum_{|\alpha|\leq q}\sup_{x\in K_1}\left|\partial^\alpha\phi\right|,\quad\forall\phi\in\mathcal{C}^\infty_c(K_1),
\end{equation}

\noindent for every $t\in K_2$. 

Now let $\mathcal{H}\subset\Omega$ be a (embedded) submanifold. We say that $\mathcal{H}$ is maximally real if 

\begin{equation*}
    \mathbb{C}\mathrm{T}_p\Omega=\mathcal{V}_p\oplus\mathbb{C}\mathrm{T}_p\mathcal{H},\quad\forall p\in\mathcal{H},
\end{equation*}

\noindent or equivalently,

\begin{equation*}
    \mathbb{C}\mathrm{T}^\ast_p\Omega=\mathbb{C}\mathrm{N}^\ast_p\mathcal{H}\oplus\mathrm{T}^\prime_p,\quad\forall p\in\mathcal{H}.
\end{equation*}

\subsection{Hypo-analytic structures}

 Let $\Omega\subset \mathbb{R}^N$ be an open set. A hypo-analytic structure on $\Omega$ is a pair $\{(U_\alpha)_{\alpha\in\Lambda},(Z_\alpha)_{\alpha\in\Lambda}\}$ such that 
  
  \begin{itemize}
      \item $(U_\alpha)_{\alpha\in\Lambda}$ is an open covering for $\Omega$;
      \item $Z_\alpha:U_\alpha\longrightarrow\mathbb{C}^m$ is a smooth map, for every $\alpha\in\Lambda$;
      \item $\mathrm{d}Z_{\alpha,1},\dots,\mathrm{d}Z_{\alpha,m}$ are $\mathbb{C}-$linear independent on $U_\alpha$, for every $\alpha\in\Lambda$;
      \item if $\alpha\neq\beta$, then to each $p\in U_\alpha\cap U_\beta$ there is a holomorphic map $F$ such that $Z_\alpha=F\circ Z_\beta$, in a neighborhood of $p$ in $U_\alpha\cap U_\beta$;
      \item if $Z:U\longrightarrow\mathbb{C}^m$ is a smooth function such that for every $p\in U\cap U_\alpha$ there exists a holomorphic function $F$ such that $Z=F\circ Z_\alpha$, then $(U,Z)=(U_\beta,Z_\beta)$, for some $\beta\in\Lambda$.  
  \end{itemize}
 
 \noindent We call each pair $(U_\alpha,Z_\alpha)$ as a hypo-analytic chart. We say that a distribution $u\in\mathcal{D}^\prime(\Omega)$ is hypo-analytic at $p$ if for some $\alpha\in\Lambda$ such that $p\in U_\alpha$, there is a holomorphic function $F$, defined on a complex neighborhood of $Z_\alpha(p)$, such that $u=F\circ Z_\alpha$, in some open neighborhood of $p$. Given a hypo-analytic structure on $\Omega$ we can associate a locally integrable structure $\mathcal{V}$ setting its orthogonal $\mathrm{T}^\prime$ as the complex bundle locally defined by the differentials $\mathrm{d}Z_1,\dots,\mathrm{d}Z_m$. So let $p\in\Omega$ and $(U,Z)$ a hypo-analytic chart, with $p\in U$. We can assume that there are local coordinates $(x_1,\dots, x_m,t_1, \dots, t_n)$ in $U=V\times W$, as described in the section above, so the function $Z$ is given by \eqref{eq:Z}. Note that in this coordinate system, the point $p$ is the origin.
 
 \begin{defn}
 Let $s>1$. We say that a distribution $u$ on $U$ is a Gevrey-$s$ vector if $u$ is a smooth function on $U$, and for every compact set $K\subset U$ there exists a constant $C>0$ such that

\begin{equation*}
    \sup_{(x,t)\in K}|\mathrm{L}^\alpha\mathrm{M}^\beta u(x,t)|\leq C^{|\alpha|+|\beta|+1}\alpha!^s\beta!^s,\quad\forall \alpha\in\mathbb{Z}_+^n, \beta\in\mathbb{Z}_+^m.
\end{equation*}

\noindent We denote by $\mathrm{G}^s(U; \mathrm{L}_1,\cdots,\mathrm{L}_n,\mathrm{M}_1,\cdots,\mathrm{M}_m)$ the space of all Gevrey-$s$  vectors on $U_1$. If $s=1$ we say that $u$ is an analytic vector, and we write $u\in\mathcal{C}^\omega(U; \mathrm{L}_1,\cdots,\mathrm{L}_n,\mathrm{M}_1,\cdots,\mathrm{M}_m))$.
 \end{defn}
 
 \noindent We have the following characterization of Gevrey vectors in terms of almost-analytic extensions:
 
 \begin{thm}[Theorem $1.1$ of \cite{caetano2000classes}]
Let $u$ be a distribution on $U$ and $U_1=V_1\times W_1\Subset U$, where $V_1$ and $W_1$ are balls centered at the origin. Are equivalent:

\begin{enumerate}
    \item $u$ is a Gevrey-$s$ vector on $U_1$;
    \item There exist $\mathcal{O}$ an open neighborhood of $(Z(U_1),W_1)$ on $\mathbb{C}^{n+m}$ and a Gevrey function $F\in\mathrm{G}^s(\mathcal{O})$ such that
    
    \begin{equation*}
        \begin{cases}
            F(Z(x,t),t)=u(x,t),\quad\forall (x,t)\in U_1;\\
            \big(\overline{\partial_z}+\overline{\partial_\tau}\big)F(z,\tau)\sim 0,\quad \text{on}\;\; (Z(U_1),W_1).
        \end{cases}
    \end{equation*}
\end{enumerate}
\end{thm}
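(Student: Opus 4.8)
The two implications are of quite different character. For $(2)\Rightarrow(1)$ I would argue by the chain rule. Write $\mathcal M:=\{(Z(x,t),t):(x,t)\in U_1\}$, the totally real $(n+m)$‑dimensional submanifold of $\mathbb C^{n+m}$ on which $u$ lives, and recall that $\big(\overline{\partial_z}+\overline{\partial_\tau}\big)F\sim 0$ on $\mathcal M$ means that $\overline{\partial_z}F$, $\overline{\partial_\tau}F$ and all of their derivatives vanish on $\mathcal M$. Using $\mathrm M_lZ_k=\delta_{lk}$, $\mathrm M_lt_i=0$, $\mathrm L_jZ_k=0$, $\mathrm L_jt_i=\delta_{ji}$ together with $Z_k=x_k+i\phi_k$, one checks by induction on $|\alpha|+|\beta|$ (the $\phi$‑terms produced by $\mathrm L_j$ cancelling at each step) that
\[
\mathrm L^\alpha\mathrm M^\beta u(x,t)=\big(\partial_\tau^\alpha\partial_z^\beta F\big)(Z(x,t),t)+E_{\alpha,\beta}(x,t),
\]
where every term of $E_{\alpha,\beta}$ contains a factor $\partial^{\alpha'}_{\bar\tau}\partial^{\beta'}_{\bar z}\big(\partial^{\alpha''}_{\tau}\partial^{\beta''}_{z}F\big)$ with $|\alpha'|+|\beta'|\ge 1$, the remaining factors being bounded derivatives of $\phi$ and of the $a_{j,k}$ (only smooth, but harmless here). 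Such factors vanish on $\mathcal M$, so $\mathrm L^\alpha\mathrm M^\beta u=(\partial_\tau^\alpha\partial_z^\beta F)\circ(Z,\mathrm{id})$ on $U_1$, and restricting the Gevrey‑$s$ bounds for $\partial^\gamma F$ on compact subsets of $\mathcal O$ to $\mathcal M$ yields the Gevrey‑vector bounds for $u$ on every $K\Subset U_1$.

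For $(1)\Rightarrow(2)$ the plan is to produce $F$ as a Gevrey almost‑analytic extension to a tubular neighbourhood of $\mathcal M$ of the ``jet in the variables $Z,t$'' carried by $u$, namely of the family of formal power series
\[
\widehat F_q(z,\tau)=\sum_{\alpha,\beta}\frac{\mathrm L^\alpha\mathrm M^\beta u(q)}{\alpha!\,\beta!}\,(\tau-t_q)^\alpha\,(z-Z(q))^\beta,\qquad q=(x_q,t_q)\in U_1,
\]
which are purely holomorphic (no $\bar z,\bar\tau$ monomials) and which, by the computation above, are forced to be the formal Taylor expansions of any admissible $F$. The technically central step is to show that $\{\widehat F_q\}_q$ is a coherent Whitney field of Gevrey‑$s$ class along $\mathcal M$, i.e.\ the Taylor‑type estimate
\[
\Big|\,\mathrm L^\alpha\mathrm M^\beta u(q')-\!\!\!\sum_{|\gamma|+|\delta|<N}\!\!\!\frac{\mathrm L^{\alpha+\gamma}\mathrm M^{\beta+\delta}u(q)}{\gamma!\,\delta!}\,(t_{q'}-t_q)^\gamma(Z(q')-Z(q))^\delta\,\Big|\le C^{\,N+|\alpha|+|\beta|}(N!)^{s-1}\big(|t_{q'}-t_q|+|Z(q')-Z(q)|\big)^N ,
\]
uniformly for $q,q'$ in a compact subset of $U_1$ (the commutators of the frame, which stay in the span of $\mathrm M_1,\dots,\mathrm M_m$, making orderings irrelevant up to the constants). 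It is essential to expand $u$, and the $\mathrm L^\alpha\mathrm M^\beta u$ (again Gevrey vectors), in the \emph{increments of $Z$ and $t$} rather than in the raw coordinates: since $Z$, $\phi$ and the $a_{j,k}$ are merely smooth, $u$ need not be a Gevrey function of $(x,t)$, so this estimate must be obtained using only the frame, exploiting $\mathrm M_kZ_l=\delta_{kl}$; the Baouendi–Treves approximation formula, with its parameter sent to infinity, is the natural vehicle.

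Granting that, I would assemble $F$ in Whitney fashion: take a Whitney decomposition of $\mathcal O\setminus\mathcal M$ into boxes $Q_\nu$ of side $\ell_\nu\simeq\operatorname{dist}(Q_\nu,\mathcal M)$, a Gevrey‑$s$ partition of unity $\{\psi_\nu\}$ subordinate to them (so $|\partial^\mu\psi_\nu|\le (C/\ell_\nu)^{|\mu|}\mu!^{\,s}$), base points $q_\nu\in U_1$ with $(Z(q_\nu),t_{q_\nu})$ adjacent to $Q_\nu$, and set $F=\sum_\nu\psi_\nu P_\nu$, where $P_\nu$ is the holomorphic polynomial obtained by truncating $\widehat F_{q_\nu}$ at order $j_\nu\simeq\ell_\nu^{-1/(s-1)}$. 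Since the $P_\nu$ are holomorphic, $\overline\partial F=\sum_\nu(\overline\partial\psi_\nu)(P_\nu-P_{\nu_0})$ for any fixed neighbouring $\nu_0$ (using $\sum_\nu\overline\partial\psi_\nu=0$), and the coherence estimate bounds $P_\nu-P_{\nu_0}$ and its derivatives on overlapping boxes by $\exp(-c\,\operatorname{dist}^{-1/(s-1)})$, which is exactly the Gevrey‑$s$ flatness $\big(\overline{\partial_z}+\overline{\partial_\tau}\big)F\sim0$ on $\mathcal M$; the same estimate shows $P_\nu\to u$ as $\ell_\nu\to0$, so $F$ extends continuously up to $\mathcal M$ with $F(Z(x,t),t)=u(x,t)$; and the standard Whitney–Gevrey bookkeeping gives $|\partial^\mu F|\le C^{|\mu|+1}\mu!^{\,s}$, i.e.\ $F\in\mathrm G^s(\mathcal O)$ for a suitable $\mathcal O\supset(Z(U_1),W_1)$ (after shrinking $U_1$ if needed). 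For $s=1$ the series $\widehat F_q$ converge and the same computation returns a genuine holomorphic extension with $\overline{\partial_z}F=\overline{\partial_\tau}F=0$.

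The main obstacle is the coherence/Taylor estimate of the second paragraph: it must hold with constants growing only like $(N!)^{s-1}$ while being derived \emph{solely} from the rough frame $\mathrm L_1,\dots,\mathrm L_n,\mathrm M_1,\dots,\mathrm M_m$ — this is precisely where the weakness of the hypothesis (a Gevrey \emph{vector}, not a Gevrey function) is felt, and the reason one is forced to expand in $Z$ and $t$ and to invoke the approximation formula. A secondary, by now classical but delicate, point is the tuning in the Whitney step, so that the single scale $\ell_\nu^{-1/(s-1)}$ simultaneously delivers the Gevrey‑$s$ regularity of $F$ and the Gevrey‑$s$ flatness of $\overline\partial F$ on $\mathcal M$.
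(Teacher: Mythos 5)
This statement is quoted from Caetano's thesis (\cite{caetano2000classes}); the paper itself does not reproduce a proof, so there is no in-paper argument to compare against. Judged on its own merits, your outline is the natural and essentially correct strategy for both directions, but it leaves the genuinely difficult step unproved.

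The $(2)\Rightarrow(1)$ direction via the chain rule is fine: since $\mathrm M_kZ_l=\delta_{kl}$, $\mathrm M_kt_i=0$, $\mathrm L_jZ_l=0$, $\mathrm L_jt_i=\delta_{ji}$, one does get $\mathrm L^\alpha\mathrm M^\beta u(x,t)=(\partial_\tau^\alpha\partial_z^\beta F)(Z(x,t),t)+E_{\alpha,\beta}$ with every term of $E_{\alpha,\beta}$ carrying a factor that is a derivative of some component of $\overline\partial F$, hence vanishing on $\mathcal M$ by flatness (the coefficients coming from $\phi$ and the $a_{j,k}$ are only smooth, but since those terms vanish identically on $\mathcal M$ their lack of Gevrey regularity is harmless). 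Restricting the Gevrey bounds for $\partial^\gamma F$ then gives the Gevrey-vector bounds for $u$. Also note that the frame here is pairwise commuting by construction, so there is no commutator issue to worry about as you suggest in passing.

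For $(1)\Rightarrow(2)$, the Whitney almost-analytic extension scheme is the right shape, and the key observation — that the jet $\{\mathrm L^\alpha\mathrm M^\beta u\}$ together with zeros in the $\bar z,\bar\tau$ slots is a holomorphic Whitney field on the totally real manifold $\mathcal M$, so that coherence need only be checked against the holomorphic Taylor polynomials — is correct. However, the entire weight of the proof rests on the Taylor-type coherence estimate in the second paragraph, and that is precisely what you do not prove. As you yourself acknowledge, this is the nontrivial point: $u$ is not a Gevrey function of $(x,t)$, $\phi$ and the $a_{j,k}$ are only $C^\infty$, and one must avoid ever differentiating those rough coefficients, working only with the frame and with contour integrals in the $(Z,t)$ parametrization of $\mathcal M$. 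The one-variable FTC/iterated-integral mechanism (moving along fibers $\{Z=\text{const}\}$ to realize $\mathrm L_j$, and along $\{t=\text{const}\}$ against $dZ$ to realize $\mathrm M_k$, and controlling arc-lengths by Euclidean increments because $d_x\phi(0)=0$) is indeed the route one would take, but it is not executed here, and it has some genuine delicacies in several variables (ordering of the one-dimensional flows, path length versus $|Z(q')-Z(q)|+|t_{q'}-t_q|$, uniformity in $q,q'$). There is also a small bookkeeping slip in the estimate as stated: the right-hand side should carry a factor of order $\alpha!^s\beta!^s$ (or the left-hand side should be normalized), since for $|\gamma|+|\delta|=N$ one has $\sup|\mathrm L^{\alpha+\gamma}\mathrm M^{\beta+\delta}u|\lesssim C^{|\alpha|+|\beta|+N}\alpha!^s\beta!^sN!^s$, not $C^{|\alpha|+|\beta|+N}N!^s$. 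Once the coherence estimate is actually established, the Whitney--Gevrey assembly you sketch in the third paragraph is standard and would close the argument; but as it stands the proposal identifies the correct plan without carrying out its crucial step.
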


\noindent Here $f\sim 0$ at $\Sigma$ means that $f$ is flat on $\Sigma$. A useful consequence of this theorem, that we shall use later on, is the following:

\begin{cor}\label{cor:caetano-decay}
Let $u$ be a distribution on $U$ and $U_1=V_1\times W_1\Subset U$, where $V_1$ and $W_1$ are open balls centered at the origin. Suppose that $u|_{U_1}\in\mathrm{G}^s(U_1;\mathrm{L}_1,\dots\mathrm{L}_n,\mathrm{M}_1,\dots,\mathrm{M}_m)$. Then there are an open neighborhood $\mathcal{O}$ of $\{Z(x,t)\;:\;x\in V_1,t\in W_1\}$ on $\mathbb{C}^m$ and a smooth function $F\in \mathcal{C}^\infty(\mathcal{O}\times W_1)$ such that
        
       \begin{equation}\label{eq:caetano-decay}
            \begin{cases}
                u(x,t)=F(Z(x,t),t),\quad  (x,t)\in V_1\times W_1\\
                \big|\partial_{\overline{z}}F(z,t)\big|\leq C^{k+1}k!^{s-1}\mathrm{dist}(z;\mathfrak{W}_t)^k,\quad \forall k\in\mathbb{Z}_+, z\in\mathcal{O}, t\in W_1,
            \end{cases}
        \end{equation}
        
\noindent where $C$ is a positive constant, and 

\begin{equation*}
    \mathfrak{W}_t=\{Z(x,t)\;:\;x\in V_1\}.
\end{equation*}

\end{cor}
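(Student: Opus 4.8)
The plan is to deduce the corollary directly from the preceding theorem: restrict the almost-analytic extension to real values of its second variable, and then turn its flatness into the quantitative bound by Taylor's formula together with the Gevrey estimates on the extension.

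Since $u|_{U_1}$ is a Gevrey-$s$ vector, the theorem provides an open neighbourhood $\widetilde{\mathcal{O}}$ of $\Sigma\doteq\{(Z(x,t),t):(x,t)\in U_1\}$ in $\mathbb{C}^{n+m}$ and a function $\widetilde{F}\in\mathrm{G}^s(\widetilde{\mathcal{O}})$ with $\widetilde{F}(Z(x,t),t)=u(x,t)$ on $U_1$ and $(\overline{\partial_z}+\overline{\partial_\tau})\widetilde{F}\sim0$ on $\Sigma$. I would then set $F(z,t)\doteq\widetilde{F}(z,t)$ for $(z,t)\in\mathcal{O}\times W_1$, where $\mathcal{O}\subset\mathbb{C}^m$ is a sufficiently thin open neighbourhood of $\mathfrak{W}\doteq\{Z(x,t):x\in V_1,\ t\in W_1\}$, small enough that $\mathcal{O}\times W_1$ lies in a fixed compact set $K\Subset\widetilde{\mathcal{O}}$ and that, for each $(z,t)\in\mathcal{O}\times W_1$, the segment joining $(z,t)$ to a point of $\Sigma$ almost realizing $\mathrm{dist}((z,t),\Sigma)$ still lies in $K$. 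Then $F\in\mathcal{C}^\infty(\mathcal{O}\times W_1)$ and $F(Z(x,t),t)=u(x,t)$ holds because $t\in W_1$ is real; it remains only to estimate $\partial_{\overline{z}}F(z,t)=(\partial_{\overline{z}}\widetilde{F})(z,t)$.

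For the estimate, the flatness of the $(0,1)$-form $(\overline{\partial_z}+\overline{\partial_\tau})\widetilde{F}$ on $\Sigma$ means that each of its coefficients is flat on $\Sigma$; in particular every scalar function $g_j\doteq\partial_{\overline{z}_j}\widetilde{F}$ vanishes to infinite order at each point of $\Sigma$. Because $\widetilde{F}\in\mathrm{G}^s(\widetilde{\mathcal{O}})$ there is $A>0$ with $|\partial^\gamma\widetilde{F}|\le A^{|\gamma|+1}(\gamma!)^s$ on $K$ for every real multi-index $\gamma$; differentiating once more and absorbing the extra factor $(|\gamma|+1)^s$ into the geometric part gives $|\partial^\gamma g_j|\le B^{|\gamma|+1}(\gamma!)^s$ on $K$. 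Fix $(z,t)\in\mathcal{O}\times W_1$ and $q\in\Sigma$ with $|(z,t)-q|$ as close as we wish to $r\doteq\mathrm{dist}((z,t),\Sigma)$, and write the order-$k$ Taylor expansion of $g_j$ about $q$ with Lagrange remainder: the terms of order $<k$ vanish since $g_j$ is flat at $q$, so
\begin{equation*}
    |g_j(z,t)|\ \le\ \sum_{|\gamma|=k}\frac{|\partial^\gamma g_j(\xi)|}{\gamma!}\,|(z,t)-q|^{k}\ \le\ \Big(\sum_{|\gamma|=k}(\gamma!)^{s-1}\Big)B^{k+1}\,r^{k},
\end{equation*}
for some $\xi$ on the segment (which lies in $K$). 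Since $\gamma!\le k!$ when $|\gamma|=k$ and the number of such $\gamma$ in $2(n+m)$ variables grows at most geometrically in $k$, the sum is $\le D^{k+1}(k!)^{s-1}$, so $|g_j(z,t)|\le(BD)^{k+1}(k!)^{s-1}r^{k}$. Finally, because $(Z(x,t),t)\in\Sigma$ for every $x\in V_1$ — with the \emph{same} $t$ — one has $r=\mathrm{dist}((z,t),\Sigma)\le\inf_{x\in V_1}|z-Z(x,t)|=\mathrm{dist}(z;\mathfrak{W}_t)$; summing over $j=1,\dots,m$ and enlarging the constant yields \eqref{eq:caetano-decay}.

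The only genuinely delicate points are (i) converting the qualitative flatness together with the Gevrey bounds on $\widetilde{F}$ into the explicit factor $(k!)^{s-1}$, which amounts to noting that one extra derivative costs only a polynomial factor absorbable into the geometric constant and that the Taylor remainder must be controlled uniformly on the fixed compact $K$ (this is what forces $\mathcal{O}$ to be thin); and (ii) the structural observation that the distance from $(z,t)$ to the whole graph $\Sigma$ is dominated by the distance from $z$ to the single slice $\mathfrak{W}_t$, which is exactly what makes the final bound depend on $t$ only through $\mathfrak{W}_t$. Everything else is routine bookkeeping.
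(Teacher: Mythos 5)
Your argument is correct and is exactly the fleshed‑out version of what the paper means by ``a consequence of the previous theorem and the Taylor formula'': restrict the Gevrey almost‑analytic extension to real $\tau=t$, Taylor‑expand the flat function $\partial_{\bar z}\widetilde F$ about a nearby point of $\Sigma$, use the $\mathrm{G}^s$ bounds to control the $k$‑th remainder by $B^{k+1}(k!)^{s-1}$, and observe that $\mathrm{dist}((z,t),\Sigma)\le\mathrm{dist}(z;\mathfrak W_t)$ since the slice $\{(Z(x,t),t):x\in V_1\}\subset\Sigma$ shares the same $t$. No gaps; this matches the paper's intended proof.
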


\noindent This corollary is a consequence of previous theorem and the Taylor formula.
Despite the difference between Gevrey and analytic vectors beeing a power of $s$ in their definition, they have very different properties. To illustrate this difference let us recall some well-known properties of hypo-analytic functions: 

Let $u$ be a distribution on $U$ such that $\mathrm{L}_j u=0$, $j=1,\dots,n$. Then it is equivalent (see \cite{trevesbook}):

\begin{enumerate}
    \item $u$ is hypo-analytic at the origin;
    \item the restriction of $u$ to a maximally real submaniold, passing through the origin, is hypo-analytic at the origin (with respect to the induced hypo-analytic structure);
    \item $u$ is an analytic vector in some open neighborhood of the origin.
\end{enumerate}

\noindent Let us prove that $(2)\Rightarrow(1)$. So let $\mathcal{H}$ be a maximally real submanifold such that $u|_\mathcal{H}$ is hypo-analytic at $p$. Then there exists $U_\mathcal{H}$ an open neighborhood of $p$ on $\mathcal{H}$, and a holomorphic function $F$ defined on $\mathcal{O}$, an open neighborhood  of $Z(U_\mathcal{H})$ on $\mathbb{C}^m$, such that 

\begin{equation*}
    u(p^\prime)=F(Z(p^\prime)),\quad\forall p^\prime\in U_\mathcal{H}.
\end{equation*}

\noindent Now set $\tilde{u}\doteq F\circ Z$, defined in some neighborhood of $p$ on $\Omega$. Since $F$ is holomorphic we have that $L \tilde{u}=0$, for every $L\in\mathcal{V}$, so the same is valid for $u-\tilde{u}$, and $u-\tilde{u}$ vanishes on a neighborhood of $p$ on $\mathcal{H}$. By a standard uniqueness result, based on the Baouendi-Treves approximation formula, we have that $u-\tilde{u}$ vanishes on some neighborhood of $p$, \textit{i.e.}, $u$ is hypo-analytic at $p$. Note that a key ingredient of this argument is that the composition of a holomorphic function with the first integrals $Z$s are solutions in a full neighborhood of $p$. So in the Gevrey scenario, where the function $F$ would be a Gevrey function such that $\overline{\partial_{z}}F$ is flat on $Z(U_\mathcal{H})$, we do not have this same phenomena anymore, that is, $F\circ Z$ is not a solution on a full neighborhood of $p$, just on $U_\mathcal{H}$, so we cannot apply the uniqueness result in this case. Conclusion: For Gevrey regularity, testing on maximally real submanifolds is not enough.
 
\subsection{The real structure bundle and the FBI transform}

An object that plays a central role in the analysis on hypo-analytic structures is the so-called real structure bundle. It allows us to mimic some "real techniques" on this complex scenario. Let $\mathcal{H}\subset\mathbb{C}^m$ be a maximally real submanifold (\textit{i.e.}, the restriction of the coordinate functions $z_1,\dots, z_m$ to $\mathcal{H}$ defines an hypo-analytic structure of co-rank $0$). Suppose that the origin belongs to $\mathcal{H}$, so $\mathcal{H}$ is locally the image of the map

\begin{equation*}
    Z(x)=x+i\phi(x),
\end{equation*}

\noindent where the function $\phi$ is real-valued, $\phi(0)=0$ and $\mathrm{d}\phi(0)=0$. The real structure bundle of $\mathcal{H}$ is locally defined as 

\begin{equation*}
    \mathbb{R}\mathrm{T}^\prime_{\mathcal{H}}=\{(Z(x),{}^\mathrm{t}Z_x(x)^{-1}\xi)\;:\; x\in U,\,\xi\in\mathbb{R}^m\},
\end{equation*}

\noindent where $U$ is the open neighborhood of the origin where the map $Z$ is defined. For every $\kappa>0$ we write

\begin{equation*}
    \mathfrak{C}_\kappa\doteq\{\zeta\in\mathbb{C}^m\;:\;|\Im\zeta|<\kappa|\Re\zeta|\}.
\end{equation*}

\noindent If $\zeta\in\mathbb{C}^m$ we write $\langle\zeta\rangle^2\doteq \zeta\cdot\zeta=\zeta_1^2+\cdots+\zeta_m^2$. Using the main branch of the square root we can define $\langle\zeta\rangle=[\langle\zeta\rangle^2]^{1/2}$, for $\zeta\in\mathfrak{C}_\kappa$, .

\begin{defn}
We shall say that the maximally real submanifold $\mathcal{H}$ of $\mathbb{C}^m$ at one of its points, $p$, is well positioned if there is a number $\kappa$, $0<\kappa<1$, and an open neighborhood $U$ of $p$ in $\mathcal{H}$ such that

\begin{equation*}
    \forall z,z^\prime\in U, \zeta\in \mathbb{R}^m,\;\textrm{and}\;\zeta\in\left(\left.\mathbb{R}\mathrm{T}^\prime_{\mathcal{H}}\right|_z\right)\cap\left(\left.\mathbb{R}\mathrm{T}^\prime_{\mathcal{H}}\right|_{z^\prime}\right)\;\textrm{then},
\end{equation*}
 \begin{equation}\label{eq:maximally-real-well-positioned}
     \begin{cases}
     |\Im\zeta|<\kappa|\Re\zeta|;\\
       \Im\left\{\zeta\cdot(z-z^\prime)+i\langle\zeta\rangle\langle z-z^\prime\rangle^2\right\}\geq (1-\kappa)|\zeta||z-z^\prime|^2.
     \end{cases}
 \end{equation}
 
 We shall say that $\mathcal{H}$ is very well-positioned at $p$ if, given any $0<\kappa<1$, there is an open neighborhood $U$ of $p$ in $\mathcal{H}$ such that \eqref{eq:maximally-real-well-positioned} is valid. 
\end{defn}

\noindent After applying a biholomorphism we can always assume that a maximally real submanifold is very well positioned at $p$. Now we recall the definition of the FBI transform on maximally real submanifolds:

\begin{defn}
Let $u\in\mathcal{E}^\prime(\mathcal{H})$. So we define

\begin{equation*}
    \mathfrak{F}[u](z,\zeta)\doteq\left\langle u(z^\prime), e^{i\zeta\cdot(z-z^\prime)-\langle\zeta\rangle\langle z-z^\prime\rangle^2}\Delta(z-z^\prime,\zeta)\right\rangle,
\end{equation*}

\noindent for $z\in\mathbb{C}^m$ and $\zeta\in\mathfrak{C}_1$, where

\begin{equation*}
    \Delta(z,\zeta)=\det(\mathrm{Id}+i(z\odot \zeta)/\langle\zeta\rangle),
\end{equation*}

\noindent and $(z\odot\zeta)=(z_i\zeta_j)_{i,j=1,\cdots,m}$.

\end{defn}

\noindent The real structure bundle is essential when it comes to estimates, as on can see in the next proposition (Proposition IX$.1.1.$ and Proposition IX$.2.1.$ of \cite{trevesbook}):

\begin{prop}\label{prop:basic-estimates-maximally-real-FBI}
Let $u\in\mathcal{E}^\prime(\mathcal{H})$. Then $\mathfrak{F}[u](z,\zeta)$ is holomorphic in $(z,\zeta)\in\mathbb{C}^m\times\mathfrak{C}_1$ and for every $K\subset \mathbb{C}^m$ compact set and every $0<\kappa<1$, there are constants $C,R>0$ such that

\begin{equation*}
    |\mathfrak{F}[u](z,\zeta)|\leq Ce^{R|\zeta|},\quad \forall z\in K,\,\zeta\in\mathfrak{C}_\kappa.
\end{equation*}

\noindent Now if in addition $\mathcal{H}$ is well positioned at $p\in\mathcal{H}$, then there exists an open neighborhood $U$ of $p$ on $\mathcal{H}$ such that if the support of $u$ is contained in $U$, there exist an integer $k>0$ and a constant $C>0$ such that

\begin{equation*}
    |\mathfrak{F}[u](z,\zeta)|\leq C(1+|\zeta|)^k,\quad (z,\zeta)\in\left.\mathbb{R}\mathrm{T}^\prime_{\mathcal{H}}\right|_{U},
\end{equation*}

\noindent where $(z,\zeta)\in\left.\mathbb{R}\mathrm{T}^\prime_{\mathcal{H}}\right|_{U}$ means that $z\in U$ and $\zeta\in \left.\mathbb{R}\mathrm{T}^\prime_{\mathcal{H}}\right|_{z}$.

\end{prop}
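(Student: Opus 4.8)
The plan is to write
\begin{equation*}
\mathfrak{F}[u](z,\zeta)=\bigl\langle u(z'),\,G(z-z',\zeta)\bigr\rangle,\qquad G(w,\zeta)=e^{i\zeta\cdot w-\langle\zeta\rangle\langle w\rangle^2}\Delta(w,\zeta),
\end{equation*}
and to read off all three assertions from properties of the kernel $G$ together with the seminorm estimate satisfied by the compactly supported distribution $u$. First I would check that $\langle\zeta\rangle=(\zeta\cdot\zeta)^{1/2}$ is holomorphic and nonvanishing on the cone $\mathfrak{C}_1$: for $\zeta\in\mathfrak{C}_1$ one has $\Re(\zeta\cdot\zeta)=|\Re\zeta|^2-|\Im\zeta|^2>0$, so $\zeta\cdot\zeta$ stays off the branch cut $(-\infty,0]$ and does not vanish, and moreover $c(\kappa)|\zeta|\le|\langle\zeta\rangle|\le|\zeta|$ on each $\mathfrak{C}_\kappa$ with $0<\kappa<1$. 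As $(w\odot\zeta)$ has rank at most one, $\Delta(w,\zeta)=1+i(w\cdot\zeta)/\langle\zeta\rangle$, again holomorphic on $\mathbb{C}^m\times\mathfrak{C}_1$, so $G$ is holomorphic there. Consequently, writing $z'=Z(x')$ with $x'\in U$, the map $(z,\zeta)\mapsto\bigl(x'\mapsto G(z-Z(x'),\zeta)\bigr)$ is holomorphic from $\mathbb{C}^m\times\mathfrak{C}_1$ into $\mathcal{C}^\infty(U)$, and since pairing with the fixed distribution $u\in\mathcal{E}^\prime(\mathcal{H})$ commutes with $\overline\partial_z$ and $\overline\partial_\zeta$ (differentiation under the bracket, valid because $u$ has finite order), $\mathfrak{F}[u]$ is holomorphic on $\mathbb{C}^m\times\mathfrak{C}_1$.

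For the bound $|\mathfrak{F}[u](z,\zeta)|\le Ce^{R|\zeta|}$ I would fix a compact $K\subset\mathbb{C}^m$ and $0<\kappa<1$, invoke the order-$N$ seminorm estimate for $u\in\mathcal{E}^\prime(\mathcal{H})$ (in the coordinate $x'$: $|\langle u,\psi\rangle|\le C_0\sum_{|\alpha|\le N}\sup|\partial_{x'}^\alpha\psi|$), and bound $G$ and its $x'$-derivatives crudely. For $z\in K$ and $z'=Z(x')$ in the compact set $\supp u$ one has $|z-z'|\le R_0$, hence $|e^{i\zeta\cdot(z-z')}|=e^{-\Im(\zeta\cdot(z-z'))}\le e^{R_0|\zeta|}$, $|e^{-\langle\zeta\rangle\langle z-z'\rangle^2}|=e^{-\Re(\langle\zeta\rangle\langle z-z'\rangle^2)}\le e^{R_0^2|\zeta|}$, and $|\Delta(z-z',\zeta)|\le 1+R_0/c(\kappa)$. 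Each application of $\partial_{x'_j}$ to $G(z-Z(x'),\zeta)$ multiplies it by a degree-one polynomial in $(\zeta,\langle\zeta\rangle)$, hence by a factor $O(|\zeta|)$, so $|\partial_{x'}^\alpha G(z-Z(x'),\zeta)|\le C(1+|\zeta|)^{|\alpha|}e^{R|\zeta|}$ for $|\alpha|\le N$; summing over $\alpha$ and applying the seminorm estimate gives the claim, after enlarging $R$.

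The substantive point is the polynomial bound on the real structure bundle, and here the hypothesis that $\mathcal{H}$ is well positioned at $p$ does the decisive work. I would take $U$ to be the neighbourhood furnished by \eqref{eq:maximally-real-well-positioned} and assume $\supp u\subset U$. For $(z,\zeta)\in\left.\mathbb{R}\mathrm{T}^\prime_{\mathcal{H}}\right|_U$ and $z'=Z(x')\in\supp u$, the second line of \eqref{eq:maximally-real-well-positioned} is exactly the estimate
\begin{align*}
-\Re\bigl\{i\zeta\cdot(z-z')-\langle\zeta\rangle\langle z-z'\rangle^2\bigr\}
&=\Im\bigl\{\zeta\cdot(z-z')+i\langle\zeta\rangle\langle z-z'\rangle^2\bigr\}\\
&\ge(1-\kappa)|\zeta||z-z'|^2,
\end{align*}
so $|e^{i\zeta\cdot(z-z')-\langle\zeta\rangle\langle z-z'\rangle^2}|\le e^{-(1-\kappa)|\zeta||z-z'|^2}\le 1$, while the first line of \eqref{eq:maximally-real-well-positioned} keeps $\zeta$ in $\mathfrak{C}_\kappa$, so that $\langle\zeta\rangle\asymp|\zeta|$ and $|\Delta(z-z',\zeta)|$ remains bounded. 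Differentiating $x'\mapsto G(z-Z(x'),\zeta)$ brings down factors of $\zeta$ (cost $O(|\zeta|)$) and factors of $\langle\zeta\rangle\,(z-z')$; the latter, absorbed into the Gaussian $e^{-(1-\kappa)|\zeta||z-z'|^2}$, contribute only $O(|\zeta|^{1/2})$. Hence $|\partial_{x'}^\alpha G(z-Z(x'),\zeta)|\le C(1+|\zeta|)^{|\alpha|}$ for $|\alpha|\le N$, and the seminorm estimate yields $|\mathfrak{F}[u](z,\zeta)|\le C(1+|\zeta|)^N$, i.e. the claim with $k=N$. I expect the only delicate part to be this last bookkeeping — checking that no $x'$-derivative costs more than one power of $|\zeta|$ once the accompanying powers of $|z-z'|$ have been absorbed into the Gaussian decay supplied by \eqref{eq:maximally-real-well-positioned}; the holomorphy and the crude $e^{R|\zeta|}$ bound are routine, and the crucial convexity inequality is handed to us by the very definition of a well positioned submanifold.
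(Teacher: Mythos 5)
The paper does not prove Proposition~\ref{prop:basic-estimates-maximally-real-FBI}; it is quoted directly from Treves' book (Propositions IX.1.1 and IX.2.1 of \cite{trevesbook}), so there is no in-paper proof to compare against. That said, your argument is correct and is essentially the standard one: reduce to properties of the kernel $G(w,\zeta)=e^{i\zeta\cdot w-\langle\zeta\rangle\langle w\rangle^2}\bigl(1+i(w\cdot\zeta)/\langle\zeta\rangle\bigr)$ together with the finite-order seminorm estimate for $u\in\mathcal{E}'(\mathcal{H})$. Your observations that $\Re(\zeta\cdot\zeta)=|\Re\zeta|^2-|\Im\zeta|^2>0$ on $\mathfrak{C}_1$, that $|\langle\zeta\rangle|\asymp|\zeta|$ on $\mathfrak{C}_\kappa$, and that $\Delta$ collapses to $1+i(w\cdot\zeta)/\langle\zeta\rangle$ by the rank-one determinant identity are all accurate, and differentiation under the bracket is justified by the finite order of $u$. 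For the $e^{R|\zeta|}$ bound, the crude estimates on the exponential and the $O(|\zeta|)$ per $x'$-derivative are enough. For the polynomial bound, you correctly use the well-positioned inequality to obtain $|e^{i\zeta\cdot(z-z')-\langle\zeta\rangle\langle z-z'\rangle^2}|\le e^{-(1-\kappa)|\zeta||z-z'|^2}$ for $z\in U$, $z'\in\supp u\subset U$, and $\zeta\in\left.\mathbb{R}\mathrm{T}'_{\mathcal{H}}\right|_z$; here you are implicitly reading the $\cap$ in \eqref{eq:maximally-real-well-positioned} as the $\cup$ that the paper actually uses in \eqref{eq:1/2-well-positioned} (otherwise the $\zeta$ you need would not be covered), which is the correct interpretation. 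The bookkeeping for the derivatives is the only mildly delicate point, and your observation that the terms $\langle\zeta\rangle(z_k-z'_k)$ can be traded against the Gaussian for an $O(|\zeta|^{1/2})$ cost, while bare $\zeta_k$ factors cost $O(|\zeta|)$, so that $|\partial_{x'}^\alpha G|\lesssim(1+|\zeta|)^{|\alpha|}$ on the region of interest, is exactly what is needed to conclude via the order-$N$ seminorm estimate.
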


\noindent The FBI transform can be used to characterize holomorphic extendability, as well as other kinds of extandabilities, as we shall see later on. To prove holomorphic extendability using the FBI transform one needs the following inversion formula:

\begin{prop}[Proposition IX$.2.2.$ of \cite{trevesbook}]
    Let $u\in\mathcal{E}^\prime(\mathcal{H})$. Then
    
    \begin{equation}\label{eq:first-inversion-formula}
        u(z)=\lim_{\epsilon\to 0^+}\frac{1}{(2\pi)^m}\int_{\mathrm{R}^m} e^{-\epsilon|\xi|^2}\mathfrak{F}[u](z,\xi)\mathrm{d}\xi,
    \end{equation}
    
    \noindent where the convergence is in $\mathcal{D}^\prime(\mathcal{H})$.
    \end{prop}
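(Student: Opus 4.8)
The plan is to read \eqref{eq:first-inversion-formula} as an approximate-identity statement and prove it by duality. Write $K(z,z^\prime,\zeta)=e^{i\zeta\cdot(z-z^\prime)-\langle\zeta\rangle\langle z-z^\prime\rangle^2}\Delta(z-z^\prime,\zeta)$, so that $\mathfrak{F}[u](z,\zeta)=\langle u(z^\prime),K(z,z^\prime,\zeta)\rangle$, and fix $\psi\in\mathcal{C}^\infty_c(\mathcal{H})$. By the global bound $|\mathfrak{F}[u](z,\xi)|\leq Ce^{R|\xi|}$ of Proposition \ref{prop:basic-estimates-maximally-real-FBI} (applied with $\xi\in\mathbb{R}^m\subset\mathfrak{C}_\kappa$), for each $\epsilon>0$ the integrand $e^{-\epsilon|\xi|^2}\mathfrak{F}[u](z,\xi)$ is absolutely integrable, and all interchanges of the $\xi$-integral, the integral against $\psi$ and the bracket $\langle u,\cdot\rangle$ are legitimate; this yields
\begin{equation*}
\Big\langle\tfrac{1}{(2\pi)^m}\!\int_{\mathbb{R}^m}\!e^{-\epsilon|\xi|^2}\mathfrak{F}[u](\cdot,\xi)\,\mathrm{d}\xi,\ \psi\Big\rangle=\langle u,\,R_\epsilon\psi\rangle,\qquad R_\epsilon\psi(z^\prime)=\int_{\mathcal{H}}\psi(z)\,G_\epsilon(z,z^\prime)\,\mathrm{d}z,
\end{equation*}
with $G_\epsilon(z,z^\prime)=\tfrac{1}{(2\pi)^m}\int_{\mathbb{R}^m}e^{-\epsilon|\xi|^2}K(z,z^\prime,\xi)\,\mathrm{d}\xi$. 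Since $u\in\mathcal{E}^\prime(\mathcal{H})$ has finite order, it suffices to prove that $R_\epsilon\psi\to\psi$ as $\epsilon\to 0^+$ in $\mathcal{C}^\infty$ on a neighborhood of $\supp u$; that is, that $G_\epsilon(\cdot,z^\prime)$ is an approximate identity on $\mathcal{H}$, the limit being taken together with all $z^\prime$-derivatives.

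Both the normalizing constant and the off-diagonal vanishing are already present in the flat model $\mathcal{H}=\mathbb{R}^m$ (that is $\phi\equiv 0$), which I would treat first. There $G_\epsilon(z,z^\prime)=g_\epsilon(z-z^\prime)$ with $g_\epsilon(y)=\tfrac{1}{(2\pi)^m}\int_{\mathbb{R}^m}e^{-\epsilon|\xi|^2}e^{i\xi\cdot y-|\xi||y|^2}\big(1+i\tfrac{y\cdot\xi}{|\xi|}\big)\,\mathrm{d}\xi$, and everything is explicit: performing the Gaussian $y$-integral first produces $\tfrac12(\pi/|\xi|)^{m/2}e^{-|\xi|/4}$, after which the $\xi$-integral converges by dominated convergence to exactly $1$ as $\epsilon\to 0^+$, while for $y\neq 0$ a contour computation gives $g_\epsilon(y)\to 0$. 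It is essential here that $\Delta$ contributes precisely the first-order correction compensating the quadratic term $|\xi||y|^2$ in the phase: dropping $\Delta$ would replace the $1$ above by $2$ and would leave a nonzero off-diagonal limit, so it is exactly this amplitude that makes the constant $(2\pi)^{-m}$ correct and the inversion exact.

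For general $\mathcal{H}$ I would keep both features — the vanishing off the diagonal and the exact constant — and transfer them by a contour deformation. After a biholomorphism and a partition of unity over $\supp u$ one may assume $\mathcal{H}$ very well positioned near the point under consideration, so that \eqref{eq:maximally-real-well-positioned} gives, for $\xi\in\mathbb{R}^m$, the decay $\Re\big(i\xi\cdot(z-z^\prime)-\langle\xi\rangle\langle z-z^\prime\rangle^2\big)\leq-(1-\kappa)|\xi|\,|z-z^\prime|^2$, which makes the integrals below converge, legitimizes the deformations, and bounds the curved-versus-flat discrepancies. Since $z\mapsto K(z,z^\prime,\xi)$ is entire (a polynomial times the exponential of an affine-plus-quadratic function), one introduces an almost-analytic extension $\widetilde\psi$ of $\psi|_{\mathcal{H}}$ and deforms the cycle $\mathcal{H}$ to the flat model $\mathbb{R}^m$ through $\mathcal{H}_t=\{x+it\phi(x)\}$; the lateral and $\overline{\partial}\widetilde\psi$ terms are then controlled by the decay above together with the infinite-order flatness of $\overline{\partial}\widetilde\psi$ on $\mathcal{H}$. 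What is left is the flat-model integral of the previous paragraph, whose diagonal concentration and normalization give $R_\epsilon\psi(z^\prime)\to\psi(z^\prime)$, and likewise after any number of $z^\prime$-derivatives.

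The main obstacle is this transfer step: making the deformation rigorous and bounding the error terms — the lateral boundary contribution, the $\overline{\partial}\widetilde\psi$ term, and the discrepancy between $\mathcal{H}$ and its flat model, whose integrands differ by a small smooth quantity built from $\phi$ and its derivatives — uniformly as $\epsilon\to 0^+$ and together with all $z^\prime$-derivatives, so that convergence really takes place in the $\mathcal{C}^\infty$ topology needed to pair against the finite-order distribution $u$. This is precisely where the well positioning of $\mathcal{H}$ and the holomorphy of the phase in $\zeta\in\mathfrak{C}_\kappa$ must be used in full strength; the flat-model Gaussian computation, once it has been set up, is routine.
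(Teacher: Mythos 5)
The paper does not actually prove this proposition; it is quoted verbatim from Proposition IX.2.2 of \cite{trevesbook} and used as a black box, together with the closely related Proposition \ref{prop:FBI-intermidiate-inversion-formula}. So there is no proof in the paper to compare yours against. Your plan --- dualize, compute the flat model, transfer to general $\mathcal{H}$ by contour deformation --- is the standard route for this kind of FBI inversion and is the same general type of argument the paper uses for its $t$-parameterized inversion formula \eqref{eq:FBI-inversion-formula} (Gaussian convolution identities plus Stokes).

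Your flat-model computation is correct, and it contains the real insight. I checked that $\int_{\mathbb{R}^m}e^{i\xi\cdot y-|\xi||y|^2}\bigl(1+i\tfrac{y\cdot\xi}{|\xi|}\bigr)\,\mathrm{d}y=\tfrac12(\pi/|\xi|)^{m/2}e^{-|\xi|/4}$, that the resulting $\xi$-integral evaluates to exactly $1$ (so the normalization $(2\pi)^{-m}$ is the right one), and that for $y\neq 0$ the identity $\tfrac{\xi_j}{|\xi|}e^{-|y|^2|\xi|}=-\tfrac{1}{|y|^2}\partial_{\xi_j}e^{-|y|^2|\xi|}$ plus integration by parts shows the $\Delta$ term exactly cancels the Poisson kernel, so $g_0(y)=0$. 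Your remark that dropping $\Delta$ would change the constant to $2$ and leave a nonzero off-diagonal limit is accurate and a good structural check.

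The gap is in the transfer step, and it is not just a matter of ``routine once set up.'' You deform the $z$-cycle from $\mathcal{H}$ to $\mathbb{R}^m$, absorb the lateral and $\overline{\partial}\widetilde\psi$ errors, and then say ``what is left is the flat-model integral.'' But the variable $z^\prime$ --- the argument of $R_\epsilon\psi$ and hence of the distribution $u$ --- is not an integration variable and stays on $\mathcal{H}$. After Stokes you are left with $\int_{\mathbb{R}^m}\widetilde\psi(x)K(x,z^\prime,\xi)\,\mathrm{d}x$ with $z^\prime=x^\prime+i\phi(x^\prime)\in\mathcal{H}$, which is not $g_\epsilon(x-x^\prime)$. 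You still have to parametrize $z^\prime$ by $x^\prime$ and compare this mixed kernel with the flat one, estimating the discrepancy (controlled by $\phi$ and its derivatives via well positioning \eqref{eq:maximally-real-well-positioned}) uniformly in $\epsilon$ and together with all $x^\prime$-derivatives, so that the limit holds in the $\mathcal{C}^\infty$ topology needed to pair against the finite-order $u$. Either that, or run a direct stationary-phase argument on $\mathcal{H}$ itself without detour through the flat model. That comparison is where the technical weight of the proposition sits; as written, the proof stops exactly at the point where the work begins.
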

    
\noindent So with this inversion formula one can prove the following theorem:

 \begin{thm}[Theorem IX$.3.1.$ of \cite{trevesbook}]
    Let $u\in\mathcal{E}^\prime(\mathcal{H})$ and $p\in\mathcal{H}$. The following are equivalent:
    \begin{enumerate}
        \item There exists $\mathcal{O}\subset\mathbb{C}^m$, an open neighborhood of $p$, $F$ a holomorphic function at $\mathcal{O}$, such that $u|_{\mathcal{O}\cap \Sigma}=F|_{\mathcal{O}\cap \Sigma}$;\\
        \item There exists $\mathcal{O}\subset\mathbb{C}^m$, an open neighborhood of $p$, $0<\kappa^\prime<1$, and $C,\epsilon>0$ such that
        
        \begin{equation*}
            \left|\mathfrak{F}[u](z,\zeta)\right|\leq Ce^{-\epsilon|\zeta|},\quad\forall (z,\zeta)\in\mathcal{O}\times\mathfrak{C}_{\kappa^\prime}
        \end{equation*}
       \item There exists $\mathcal{O}\subset\mathbb{C}^m$, an open neighborhood of $p$, such that $\mathfrak{F}[u](z,\xi)$ is bounded by an integrable function with respect to $\xi\in\mathbb{R}^m$, uniformly in $z\in\mathcal{O}$.
    \end{enumerate}
    \end{thm}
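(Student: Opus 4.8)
\emph{Strategy.} The plan is to establish the cycle $(1)\Rightarrow(2)\Rightarrow(3)\Rightarrow(1)$, with $(1)\Rightarrow(2)$ carrying essentially all the weight; here $\Sigma$ denotes the manifold $\mathcal{H}$ on which $u$ is supported. Since a biholomorphism defined near $p$ alters $\mathfrak{F}[u]$ only through a nonvanishing factor and a biholomorphic change of the covariable, and therefore preserves each of the three conditions, I may assume throughout that $\mathcal{H}$ is very well positioned at $p$, so that \eqref{eq:maximally-real-well-positioned} is available for every $0<\kappa<1$.

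\emph{The easy implications.} For $(2)\Rightarrow(3)$, restrict the estimate to $\zeta=\xi\in\mathbb{R}^m\setminus\{0\}\subset\mathfrak{C}_{\kappa^\prime}$: then $|\mathfrak{F}[u](z,\xi)|\leq Ce^{-\epsilon|\xi|}$ on $\mathcal{O}$ and $\xi\mapsto Ce^{-\epsilon|\xi|}$ lies in $L^1(\mathbb{R}^m)$. For $(3)\Rightarrow(1)$, let $g\in L^1(\mathbb{R}^m)$ dominate $\mathfrak{F}[u](z,\cdot)$ uniformly in $z\in\mathcal{O}$. By Proposition \ref{prop:basic-estimates-maximally-real-FBI}, $z\mapsto\mathfrak{F}[u](z,\xi)$ is entire for each $\xi$, so the truncated integrals $(2\pi)^{-m}\int_{|\xi|\leq R}\mathfrak{F}[u](z,\xi)\,\mathrm{d}\xi$ are holomorphic on $\mathcal{O}$ and, by the domination, converge uniformly on $\mathcal{O}$ as $R\to\infty$ to a function $F$; hence $F$ is holomorphic on $\mathcal{O}$. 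Dominated convergence also gives $(2\pi)^{-m}\int e^{-\epsilon|\xi|^2}\mathfrak{F}[u](z,\xi)\,\mathrm{d}\xi\to F(z)$ as $\epsilon\to0^+$, uniformly in $z\in\mathcal{O}$, hence in $\mathcal{D}^\prime(\mathcal{O}\cap\Sigma)$; comparing with the inversion formula \eqref{eq:first-inversion-formula} yields $u|_{\mathcal{O}\cap\Sigma}=F|_{\mathcal{O}\cap\Sigma}$, which is $(1)$.

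\emph{The implication $(1)\Rightarrow(2)$.} Write $u=F$ on $\mathcal{O}_0\cap\Sigma$ with $F$ holomorphic on the open set $\mathcal{O}_0\ni p$, pick $\chi\in\mathcal{C}^\infty_c(\mathcal{O}_0)$ equal to $1$ on a smaller ball $B_0\ni p$, and split $\mathfrak{F}[u]=\mathfrak{F}[\chi u]+\mathfrak{F}[(1-\chi)u]$. The term $(1-\chi)u$ is handled directly from the definition: if $z$ stays in a small ball $B^\prime\Subset B_0$ about $p$, then on $\supp((1-\chi)u)$ the quantity $|z-z^\prime|$ is bounded below, and since $\mathrm{d}\phi(0)=0$ the real part of the exponent $i\zeta\cdot(z-z^\prime)-\langle\zeta\rangle\langle z-z^\prime\rangle^2$ is dominated by its Gaussian piece and is $\leq-\epsilon|\zeta|$ on $\mathfrak{C}_{\kappa^\prime}$ once $\kappa^\prime$ and $\supp\chi$ are small; combined with \eqref{q:uniform-cont-distribution-solution} and the polynomial-in-$|\zeta|$ growth of the kernel and its $z^\prime$-derivatives, this yields $|\mathfrak{F}[(1-\chi)u](z,\zeta)|\leq Ce^{-\epsilon|\zeta|}$. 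For the term $\chi u=\chi F$ I would argue by contour deformation: the FBI kernel is entire in $z^\prime$ (no square root of $z-z^\prime$ occurs, $\Delta(z-z^\prime,\zeta)$ is a polynomial, and $\langle\zeta\rangle$ is a constant) and $F$ is holomorphic on $\mathcal{O}_0$, so Stokes' theorem — valid because $\overline{\partial}\chi$ vanishes on $B_0$ and the homotopy involved stays in $B_0$ — lets me replace the integration over $\mathcal{H}$, inside $B_0$, by integration over a maximally real $\widetilde{\mathcal{H}}$ that agrees with $\mathcal{H}$ outside $B_0$. Taking $\widetilde{\mathcal{H}}$ to be $\mathcal{H}$ pushed a small distance into $\mathbb{C}^m$ inside $B_0$, in a direction dictated by the very-well-positioned normalization, one arranges the real part of the exponent to be $\leq-\epsilon|\zeta|$ for $z\in B^\prime$ and $z^\prime\in\widetilde{\mathcal{H}}$ — first for real $\zeta$, then for $\zeta\in\mathfrak{C}_{\kappa^\prime}$ with $\kappa^\prime$ small by perturbation, the part of $\widetilde{\mathcal{H}}$ outside $B_0$ reached by $\supp\chi$ being controlled by the Gaussian as before. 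Adding the two bounds proves $(2)$ with $\mathcal{O}=B^\prime$.

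\emph{The main obstacle.} The single step requiring genuine work is the near term: one must produce one deformation $\widetilde{\mathcal{H}}$ of the contour and one cone $\mathfrak{C}_{\kappa^\prime}$ for which the quadratic phase is uniformly negative near the diagonal over all of $\mathfrak{C}_{\kappa^\prime}$ and over a full complex neighborhood of $p$, while keeping the Stokes homotopy inside the region where $F$ is holomorphic and $\overline{\partial}\chi$ vanishes. This is precisely where the reduction to a very-well-positioned $\mathcal{H}$ and the inequality \eqref{eq:maximally-real-well-positioned} are used; the rest of the argument is routine.
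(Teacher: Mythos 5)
The paper does not actually supply a proof of this statement: it quotes Theorem IX.3.1 of Treves' book and adds only the remark that item $(3)$ is the intermediary through which $(2)\Rightarrow(1)$ is obtained via the inversion formula \eqref{eq:first-inversion-formula}. Your cycle $(1)\Rightarrow(2)\Rightarrow(3)\Rightarrow(1)$ mirrors that remark, and the portions $(2)\Rightarrow(3)$ and $(3)\Rightarrow(1)$ are correct as written: restriction to real $\zeta$ for the former; holomorphicity of $z\mapsto\mathfrak{F}[u](z,\xi)$ together with dominated convergence of the truncated integrals, and comparison with \eqref{eq:first-inversion-formula}, for the latter.

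There is a gap in $(1)\Rightarrow(2)$. In your ``main obstacle'' paragraph you assert that it suffices to ``produce one deformation $\widetilde{\mathcal{H}}$ of the contour and one cone $\mathfrak{C}_{\kappa^\prime}$'' making the exponent $\leq -\epsilon|\zeta|$, i.e.\ a $\zeta$-independent deformed contour. That cannot work near the diagonal: when $z^\prime=z$ the Gaussian factor disappears and $\Delta(0,\zeta)=1$, so the only source of decay is the linear term $i\zeta\cdot(z-z^\prime)$. Pushing $\mathcal{H}$ by a fixed vector $-i\delta v$ makes the real part of this linear term roughly $-\delta\,\zeta\cdot v$ for real $\zeta$, and there is no single $v\in\mathbb{R}^m$ with $\zeta\cdot v\geq\epsilon|\zeta|$ for all $\zeta\in\mathbb{R}^m\setminus 0$. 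The deformation direction must depend on $\zeta$: one pushes in the direction $-i\zeta/\langle\zeta\rangle$, applying Stokes' theorem separately for each fixed $\zeta$ (which is legitimate since the homotopy lies in the region where $F$ is holomorphic and $\overline{\partial}\chi=0$). The Gaussian then absorbs the $O(\delta^2)$ perturbation uniformly in $\zeta$, and passage from $\mathbb{R}^m$ to a thin cone $\mathfrak{C}_{\kappa^\prime}$ is by continuity exactly as you indicate. Your treatment of the far term $(1-\chi)u$ is sound. With the $\zeta$-dependence of the contour reinstated, the argument becomes a correct account of the Treves proof of $(1)\Leftrightarrow(2)$.
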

    
\noindent The third equivalence of this theorem does not appear in the literature, but it is how actually one prove that $(2)$ implies $(1)$, using the inversion formula \eqref{eq:first-inversion-formula}. This simple observation illustrates the advantage of having holomorphic function theory at our disposal. In this cases the FBI decay is not very important because we have the control of it on a full neighborhood of $p$. Now if one wants to measure, for instance, smooth regularity with the FBI transform, then the estimate and where the estimate takes place are both very important.

 \begin{thm}[Theorem IX$.4.1$ of \cite{trevesbook}]
   Let $u\in\mathcal{E}^\prime(\mathcal{H})$ and $p\in\mathcal{H}$. Then are equivalent:
   \begin{enumerate}
       \item $u$ is $\mathcal{C}^\infty$ near $p$;\\
       \item There exists $U$ a neighborhood of $p$, such that for every $k\in \mathbb{Z}_+$ there is a $C_k>0$, such that
       
       \begin{equation*}
           \left|\mathfrak{F}[u](z,\zeta)\right|\leq C_k(1+|\zeta|)^{-k},\quad\forall(z,\zeta)\in\left.\mathbb{R}\mathrm{T}^\prime_{\mathcal{H}}\right|_{U}.
       \end{equation*}
   \end{enumerate}
    \end{thm}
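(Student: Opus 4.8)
The plan is to prove $(1)\Rightarrow(2)$ by non-stationary phase and $(2)\Rightarrow(1)$ by the inversion formula \eqref{eq:first-inversion-formula} after pushing its contour onto the real structure bundle. As a preliminary normalization I would assume, after a biholomorphism, that $\mathcal{H}$ is very well positioned at $p$ on a neighborhood $U_0$ parametrized by $Z(x)=x+i\phi(x)$ with $\mathrm{d}\phi(0)=0$, and I would reduce to $\supp u\subset U_0$ by writing $u=\theta u+(1-\theta)u$ with $\theta\equiv 1$ near $p$, $\supp\theta\subset U_0$; the term $(1-\theta)u$ vanishes near $p$, and I claim (this is really the first computation below) that its FBI transform is exponentially small on $\mathbb{R}\mathrm{T}'_{\mathcal{H}}|_U$ for $U$ a small neighborhood of $p$, so it affects neither statement.

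For $(1)\Rightarrow(2)$ I would split $u=\chi u+(1-\chi)u=:u_0+u_1$, with $\chi\equiv 1$ on a neighborhood $U_1\ni p$ and $\supp\chi$ inside both $U_0$ and the set where $u$ is smooth, and with $U\Subset U_1$. For $u_1$: it is a compactly supported distribution of some finite order $q$ with $\mathrm{dist}(\supp u_1,U)=:\delta>0$, so $|\mathfrak{F}[u_1](z,\zeta)|$ is bounded by $C(1+|\zeta|)^q$ times the supremum over $\supp u_1$ of the $x'$-derivatives of order $\le q$ of $e^{i\zeta\cdot(z-z')-\langle\zeta\rangle\langle z-z'\rangle^2}\Delta(z-z',\zeta)$; the well positioned inequality \eqref{eq:maximally-real-well-positioned} gives, on $\mathbb{R}\mathrm{T}'_{\mathcal{H}}|_U$, $\Re\{i\zeta\cdot(z-z')-\langle\zeta\rangle\langle z-z'\rangle^2\}\le-(1-\kappa)|\zeta||z-z'|^2\le-(1-\kappa)\delta^2|\zeta|$, while each $x'$-derivative of the kernel costs only a power of $|\zeta|$ (the derivatives of $\Delta$ being bounded on the real structure bundle), so $\mathfrak{F}[u_1]$ decays faster than any power of $|\zeta|$. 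For $u_0\in\mathcal{C}^\infty_c$: identifying it with $g\in\mathcal{C}^\infty_c(\mathbb{R}^m)$ through $Z$, on $\mathbb{R}\mathrm{T}'_{\mathcal{H}}|_U$ with $z=Z(x)$ and $\zeta={}^{t}Z_x(x)^{-1}\xi$ one has
\begin{equation*}
    \mathfrak{F}[u_0](z,\zeta)=\int_{\mathbb{R}^m} e^{\psi(x,x',\xi)}\,\Delta(Z(x)-Z(x'),\zeta)\,g(x')\,\mathrm{d}x',\qquad
    \psi=i\zeta\cdot(Z(x)-Z(x'))-\langle\zeta\rangle\langle Z(x)-Z(x')\rangle^{2}.
\end{equation*}
A short computation gives $\nabla_{x'}\psi|_{x'=x}=-i\xi$ and $\partial_{x'}^{\alpha}\psi=O(|\zeta|)$ for every $|\alpha|\ge 1$, so that $|\nabla_{x'}\psi|\gtrsim|\xi|$ on $|x-x'|\le\rho$ for $\rho$ small; cutting the integral smoothly there, on the inner region $N$ integrations by parts with $L=|\nabla_{x'}\psi|^{-2}\sum_j\overline{\partial_{x'_j}\psi}\,\partial_{x'_j}$ give $O(|\xi|^{-N})$, and on the outer region $\Re\psi\le-(1-\kappa)\rho^{2}|\zeta|$ gives exponential decay. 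Summing the two pieces yields (2).

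For $(2)\Rightarrow(1)$ I would fix $x$ with $Z(x)\in U$ and deform, in \eqref{eq:first-inversion-formula}, the $\xi$-integration from $\mathbb{R}^m$ onto the fiber $\mathbb{R}\mathrm{T}'_{\mathcal{H}}|_{Z(x)}={}^{t}Z_x(x)^{-1}\mathbb{R}^m$, which lies in $\mathfrak{C}_1$ when $U$ is small. This is legitimate by the several-variable Cauchy theorem because $\mathfrak{F}[u](Z(x),\cdot)$ is holomorphic on $\mathfrak{C}_1$ with at most exponential growth there (Proposition \ref{prop:basic-estimates-maximally-real-FBI}) and because $\Re\langle\zeta\rangle^{2}\ge(1-\kappa^{2})|\Re\zeta|^{2}>0$ on $\mathfrak{C}_\kappa$, so that the regularizer $e^{-\epsilon\langle\zeta\rangle^{2}}$ kills the contributions at infinity. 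Parametrizing the fiber by $\xi\in\mathbb{R}^m$ this turns \eqref{eq:first-inversion-formula} into
\begin{equation*}
    u(Z(x))=\lim_{\epsilon\to 0^{+}}\frac{1}{(2\pi)^{m}}\int_{\mathbb{R}^m}e^{-\epsilon\langle{}^{t}Z_x(x)^{-1}\xi\rangle^{2}}\,\mathfrak{F}[u]\bigl(Z(x),{}^{t}Z_x(x)^{-1}\xi\bigr)\,\det\bigl({}^{t}Z_x(x)^{-1}\bigr)\,\mathrm{d}\xi
\end{equation*}
in $\mathcal{D}'$; by (2) the integrand is dominated, uniformly in $\epsilon$, by $C_k(1+|\xi|)^{-k}$ for every $k$, so the limit passes inside and the right-hand side is an absolutely convergent integral. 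Finally, the integrand is smooth in $x$ with all $x$-derivatives again dominated by constants times $(1+|\xi|)^{-k}$ for each $k$ (differentiating $Z(x)$ and $\det({}^{t}Z_x(x)^{-1})$, and bounding the $\zeta$-derivatives of $\mathfrak{F}[u]$ via Cauchy's estimates together with (2) on a slightly larger cone), so one may differentiate under the integral sign arbitrarily often; hence $u$ agrees near $p$ with a $\mathcal{C}^\infty$ function.

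The step I expect to be the main obstacle is the contour deformation in $(2)\Rightarrow(1)$: one must connect $\mathbb{R}^m$ to $\mathbb{R}\mathrm{T}'_{\mathcal{H}}|_{Z(x)}$ through a family of $m$-dimensional real contours staying inside $\mathfrak{C}_1$ and control simultaneously the exponential growth of $\mathfrak{F}[u]$ and the Gaussian gain from $e^{-\epsilon\langle\zeta\rangle^{2}}$ uniformly along the homotopy, so that Cauchy's theorem genuinely applies and the boundary terms vanish. The remaining difficulties are bookkeeping: checking that the $x'$-derivatives of the amplitude $\Delta$ stay polynomially bounded on the real structure bundle, making precise the initial reduction to $\supp u\subset U_0$, and passing the $x$-differentiations under the integral in $(2)\Rightarrow(1)$ with honest majorants.
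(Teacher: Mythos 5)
The paper does not actually prove this theorem: it is quoted verbatim as Theorem IX$.4.1$ of \cite{trevesbook} and used as a known result, so there is no ``paper's own proof'' to compare against; I will review your argument on its merits.

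Your $(1)\Rightarrow(2)$ is essentially correct: the piece of $u$ away from $p$ decays exponentially by the well-positioned inequality, and the smooth compactly supported piece is handled by the usual non-stationary phase argument, using $\nabla_{x'}\psi|_{x'=x}=-i\xi$ together with the cut into $|x-x'|\le\rho$ (integration by parts) and $|x-x'|>\rho$ (exponential decay).

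The direction $(2)\Rightarrow(1)$ has a genuine gap, and it is not the one you flag. The contour deformation from $\mathbb{R}^m$ to $\left.\mathbb{R}\mathrm{T}^\prime_{\mathcal{H}}\right|_{Z(x)}$ is unproblematic for each fixed $\epsilon>0$, precisely because the Gaussian factor $e^{-\epsilon\langle\zeta\rangle^2}$ crushes the exponential growth of $\mathfrak{F}[u]$ along the homotopy; the dominated-convergence step then gives a \emph{continuous} limit. The difficulty is the last sentence, where you differentiate under the integral in $x$: this requires bounding $\partial_z\mathfrak{F}[u]$ and $\partial_\zeta\mathfrak{F}[u]$ at points of $\mathbb{R}\mathrm{T}^\prime_{\mathcal{H}}$, and you invoke ``Cauchy's estimates together with $(2)$ on a slightly larger cone.'' But hypothesis $(2)$ gives rapid decay \emph{only} on the real structure bundle, which is a totally real $2m$-dimensional submanifold of $\mathbb{C}^m\times\mathfrak{C}_1$ of measure zero; Cauchy's estimates need a bound on an open polydisc, and on such a polydisc the only available bound is the crude exponential one $Ce^{R|\zeta|}$ from Proposition \ref{prop:basic-estimates-maximally-real-FBI}, which yields nothing. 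To fill this you either need a Phragm\'en--Lindel\"of-type argument propagating the rapid decay from $\mathbb{R}\mathrm{T}^\prime_{\mathcal{H}}$ to a complex neighborhood of size $O(1/|\zeta|)$ (this is essentially what Treves does), or, more cleanly, you should use the second (double) inversion formula --- the one the paper establishes in \eqref{eq:FBI-inversion-formula}, following Treves's Lemma IX$.4.1$ --- in which $\mathfrak{F}[u](z',\zeta)$ is integrated over the real structure bundle and the dependence on $x$ sits entirely in the explicit Gaussian kernel $e^{i\zeta\cdot(Z(x)-z')-\langle\zeta\rangle\langle Z(x)-z'\rangle^2}$. Differentiating that kernel in $x$ produces only polynomial factors in $\zeta$, which the rapid decay of $\mathfrak{F}[u]$ absorbs, and no control on derivatives of $\mathfrak{F}[u]$ is needed. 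This is exactly the route the paper takes in the analogous implication $(3)\Rightarrow(1)$ of Theorem \ref{thm:gevrey-FBI-hypo}.
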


\section{A FBI characterization of Gevrey vectors}\label{section:Gevrey}

Since the main result of this section, Theorem \ref{thm:gevrey-FBI-hypo}, is a local result, we will fix an arbitrary point at $\Omega$, and for simplicity we shall call it the origin. As we saw in the previous section, there is a hypo-analytic chart $(U,Z_1(x,t),\cdots,Z_m(x,t))$, with $0\in U$, and we can assume that the $Z$'s are given by

\begin{equation*}
    Z_j(x,t)=x_j+i\phi_j(x,t),\qquad j=1,\cdots,m,
\end{equation*}

\noindent with  $(x,t)\in U$, where the map $\phi(x,t)=(\phi_1(x,t),\cdots,\phi_m(x,t))$ is smooth, real-valued, $\phi(0)=0$ and $\mathrm{d}_x\phi(0)=0$. We can associate to it the complex vector fields $\{\mathrm{M}_1,\cdots,\mathrm{M}_m,\mathrm{L}_1,\cdots,\mathrm{L}_n\}$ with the following properties:

\begin{center}
    \begin{tabular}{l l}
        $\mathrm{L}_jZ_k=0$ & $\mathrm{M}_jZ_k=\delta_{j,k}$ \\
        $\mathrm{L}_jt_k=\delta_{j,k}$ & $\mathrm{M}_j t_k=0$.
    \end{tabular}
\end{center}

\noindent We can also assume that

\begin{equation}\label{eq:bound-phi}
    |\phi_j(x,t)|\leq C(|x|^3+|t|),\qquad j=1,\cdots,m,
\end{equation}

\noindent for some positive constant $C$, and 

\begin{equation}\label{eq:bound-derivative-phi}
    |\phi(x,t)-\phi(x^\prime,t)|\leq\mu|x-x^\prime|,
\end{equation}

\noindent with $0<\mu$ small as we want, for instance, less than $1$ (see pg. 433 of \cite{trevesbook}). 
From now on we are going to assume that $U=V\times W$, where $V\subset\mathbb{R}^m$ and $W\subset\mathbb{R}^n$ are balls centered at the origin.
Under this assumptions we can assume that for some $0<\kappa<1$ and $c>0$:

\begin{equation*}
    \forall x,x^\prime\in V, t\in W, \xi\in \mathbb{R}^m,\;\textrm{if}\;\zeta={}^\mathrm{t}Z_x(x,t)^{-1}\xi\;\textrm{then},
\end{equation*}
 \begin{equation}\label{eq:well-positioned}
     \begin{cases}
     |\Im\zeta|<\kappa|\Re\zeta|;\\
       \Im\left\{\zeta\cdot(Z(x,t)-Z(x^\prime,t))+i\langle\zeta\rangle\langle Z(x,t)-Z(x^\prime,t)\rangle^2\right\}\geq c|\zeta||Z(x,t)-Z(x^\prime,t)|^2.
     \end{cases}
 \end{equation}
  
\noindent For every $t\in W$ we define the maximally real submaniold $\mathfrak{W}_t$ as

\begin{equation*}
    \mathfrak{W}_t\doteq\{Z(x,t)\,:\,x\in V\}\subset \mathbb{C}^m,
\end{equation*}

\noindent and we can write the real structure bundle of $\mathfrak{W}_t$ as 

\begin{equation*}
    \mathbb{R}\mathrm{T}^\prime|_{\mathfrak{W}_t}=\{(Z(x,t),{}^tZ_x(x,t)^{-1}\xi\,:\, x\in V, \xi\in\mathbb{R}^m\setminus 0)\}.
\end{equation*}

\noindent We can also assume that 

\begin{equation}\label{eq:1/2-well-positioned}
     \Im\left\{\zeta\cdot(Z(x,t)-Z(x^\prime,t))+i\frac{1}{2}\langle\zeta\rangle\langle Z(x,t)-Z(x^\prime,t)\rangle^2\right\}\geq c|\zeta||Z(x,t)-Z(x^\prime,t)|^2,
\end{equation}

\noindent for every $x,x^\prime\in V$, $t\in W$, $\zeta\in\left.\mathbb{R}\mathrm{T}^\prime_{\mathfrak{W}_t}\right|_{Z(x,t)}\cup\left.\mathbb{R}\mathrm{T}^\prime_{\mathfrak{W}_t}\right|_{Z(x^\prime,t)}$. One consequence of \eqref{eq:well-positioned} is that for every $\zeta\in\zeta\in\left.\mathbb{R}\mathrm{T}^\prime_{\mathfrak{W}_t}\right|_{Z(x,t)}$ the following is valid:

\begin{equation}\label{eq:estimate-re-langle-zeta-rangle}
    \Re\langle\zeta\rangle\geq\sqrt{\frac{1-\kappa^2}{1+\kappa^2}}|\zeta|\quad\text{and}\quad \Im\langle\zeta\rangle\leq |\zeta|.
\end{equation}

\begin{defn}
Let $u\in\mathcal{C}^\infty(W;\mathcal{E}^\prime(V))$ and $\lambda>0$. We define the FBI transform of $u$ as

\begin{equation*}
    \mathfrak{F}^\lambda[u](t;z,\zeta)=\int_V e^{i\zeta\cdot(z-Z(x^\prime,t))-\lambda\langle\zeta\rangle\langle z-Z(x^\prime,t)\rangle^2}u(x^\prime,t)\Delta(\lambda(z-Z(x^\prime,t)),\zeta))\mathrm{d}Z(x^\prime,t),
\end{equation*}

\noindent with $z\in\mathbb{C}^m$ and $\zeta\in\mathfrak{C}_1\setminus 0$.

\end{defn}

\noindent If we denote by $\widetilde{u}(z,t)=u(x,t)$, for $z=Z(x,t)$, then we can write

\begin{equation*}
    \mathfrak{F}^\lambda[u](t;z,\zeta)=\int_{\mathfrak{W}_t}e^{i\zeta\cdot(z-z^\prime)-\lambda\langle\zeta\rangle\langle z-z^\prime\rangle^2}\widetilde{u}(z^\prime,t)\Delta(\lambda(z-z^\prime),\zeta)\mathrm{d}z^\prime.
\end{equation*}

\noindent Note that the integral is to be understood in the dual sense. Since $u$ has compact support in $x$ we have that $\mathfrak{F}^\lambda[u](t;z,\zeta)$ is holomorphic with respect to $(z,\zeta)\in\mathbb{C}^m\times\mathfrak{C}_1\setminus 0$, and $\mathcal{C}^\infty$ with respect to $t$. For simplicity we write $\mathfrak{F}[u](t;z,\zeta)$, for $\lambda=1$. As in \ref{prop:basic-estimates-maximally-real-FBI}, we have the following bound for the FBI transform:

\begin{lem}
Let $u\in\mathcal{C}^\infty(W,\mathcal{E}^\prime(V))$. Then there exist $C>0$ and $k\in\mathbb{Z}_+$ such that

\begin{equation}\label{eq:first-bound-FBI}
    |\mathfrak{F}[u](t;z,\zeta)|\leq C(1+|\zeta|)^k,\quad\forall(z,\zeta)\in\mathbb{R}\mathrm{T}^\prime_{\mathfrak{W}_t}.
\end{equation}

\end{lem}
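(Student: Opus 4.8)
The plan is to reduce the estimate to the maximally real case already settled in Proposition \ref{prop:basic-estimates-maximally-real-FBI}, exploiting the fact that for each fixed $t\in W$ the submanifold $\mathfrak{W}_t\subset\mathbb{C}^m$ is maximally real and, by \eqref{eq:well-positioned}, well positioned uniformly in $t$. Indeed, writing $\widetilde{u}(\cdot,t)\in\mathcal{E}^\prime(\mathfrak{W}_t)$ for the compactly supported distribution on $\mathfrak{W}_t$ induced by $u(\cdot,t)$, the displayed formula identifies $\mathfrak{F}[u](t;z,\zeta)$ with the maximally real FBI transform $\mathfrak{F}[\widetilde{u}(\cdot,t)](z,\zeta)$ of Section 2.3, evaluated at $(z,\zeta)\in\mathbb{R}\mathrm{T}^\prime_{\mathfrak{W}_t}$. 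So the second half of Proposition \ref{prop:basic-estimates-maximally-real-FBI} already gives, for each fixed $t$, a bound $|\mathfrak{F}[u](t;z,\zeta)|\leq C_t(1+|\zeta|)^{k_t}$ on $\mathbb{R}\mathrm{T}^\prime_{\mathfrak{W}_t}|_{V}$; the only real content of the lemma is that $C$ and $k$ can be chosen \emph{independently of} $t\in W$.

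To get the uniformity I would simply redo the proof of Proposition IX.2.1 of \cite{trevesbook} keeping track of the parameter $t$. First, from \eqref{q:uniform-cont-distribution-solution} there are a constant $C_0>0$ and an integer $q$, uniform over the compact set $\overline W$, such that $|\langle u(x,t),\psi(x)\rangle|\leq C_0\sum_{|\alpha|\leq q}\sup_{x\in\overline V}|\partial^\alpha\psi|$ for all $\psi\in\mathcal{C}^\infty_c(V)$ and all $t\in W$. Applying this with $\psi(x)=e^{i\zeta\cdot(z-Z(x,t))-\langle\zeta\rangle\langle z-Z(x,t)\rangle^2}\Delta(z-Z(x,t),\zeta)\,J(x,t)$, where $J=\det Z_x$ is the Jacobian of the parametrization and $z=Z(x_0,t)$ with $(z,\zeta)\in\mathbb{R}\mathrm{T}^\prime_{\mathfrak{W}_t}$, reduces everything to bounding the derivatives $\partial_x^\alpha\psi$ for $|\alpha|\leq q$. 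Each such derivative is, by Leibniz, a finite sum of terms carrying at most $|\alpha|\leq q$ powers of $\zeta$ and $\langle\zeta\rangle$ (hence at most a polynomial factor $(1+|\zeta|)^{2q}$, using $|\langle\zeta\rangle|\leq C|\zeta|$ on $\mathfrak{C}_1$ and the estimate \eqref{eq:estimate-re-langle-zeta-rangle}) times the exponential $e^{\Re\{i\zeta\cdot(z-Z(x,t))-\langle\zeta\rangle\langle z-Z(x,t)\rangle^2\}}$. The phase is controlled by the well-positioned inequality \eqref{eq:well-positioned}: for $(z,\zeta)\in\mathbb{R}\mathrm{T}^\prime_{\mathfrak{W}_t}|_{Z(x_0,t)}$ one has
\begin{equation*}
    \Re\big\{i\zeta\cdot(Z(x_0,t)-Z(x,t))-\langle\zeta\rangle\langle Z(x_0,t)-Z(x,t)\rangle^2\big\}=-\,\Im\big\{\zeta\cdot(Z(x_0,t)-Z(x,t))+i\langle\zeta\rangle\langle Z(x_0,t)-Z(x,t)\rangle^2\big\}\leq -c\,|\zeta|\,|Z(x_0,t)-Z(x,t)|^2\leq 0,
\end{equation*}
so the exponential is $\leq 1$. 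All the constants appearing — the bound on $J$ and its $x$-derivatives, the bounds on the $x$-derivatives of $\phi$ up to order $q+1$, the constants $C_0,q,\kappa,c$ — are uniform for $t$ in the ball $W$ because $\phi\in\mathcal{C}^\infty(\overline U)$ and $\overline W$ is compact. Collecting these, $|\mathfrak{F}[u](t;z,\zeta)|\leq C(1+|\zeta|)^{k}$ with $C,k$ depending only on $u$, $U$, and the structure, which is \eqref{eq:first-bound-FBI}.

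The main obstacle, such as it is, is bookkeeping rather than conceptual: one must verify that when $\Delta(z-Z(x,t),\zeta)$ and the exponential are differentiated in $x$ up to order $q$, no factor of $|\zeta|$ of order higher than polynomial is produced — this uses crucially that $\Delta$ depends on $\zeta$ only through the homogeneous-of-degree-zero ratio $z\odot\zeta/\langle\zeta\rangle$, so differentiating it in $x$ brings down factors of $\zeta/\langle\zeta\rangle$ which are bounded on $\mathfrak{C}_1$, while differentiating the Gaussian $e^{-\langle\zeta\rangle\langle\cdot\rangle^2}$ brings down factors of $\langle\zeta\rangle$, each absorbed into the polynomial at the cost of the vanishing quadratic factor in the exponent. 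I would also note in passing the first (global, exponential) bound of Proposition \ref{prop:basic-estimates-maximally-real-FBI}, uniform in $t$ by the same compactness argument, which is what legitimizes writing the integral in the dual sense and guarantees holomorphy in $(z,\zeta)$ and smoothness in $t$; but only the polynomial bound on the real structure bundle is recorded in the lemma.
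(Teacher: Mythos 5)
Your argument is correct and follows the route the paper intends: the paper gives no proof for this lemma, simply writing ``As in Proposition~\ref{prop:basic-estimates-maximally-real-FBI}, we have the following bound'' (thereby invoking Propositions IX.1.1 and IX.2.1 of \cite{trevesbook}), and your proposal supplies exactly the missing uniform-in-$t$ version of that estimate --- applying \eqref{q:uniform-cont-distribution-solution} to the FBI kernel as test function, using the well-positioned inequality \eqref{eq:well-positioned} to keep the real part of the exponent nonpositive on the real structure bundle, and tracking via Leibniz that each $x$-derivative of the kernel contributes only polynomial growth in $|\zeta|$. The one minor point, which the paper itself glosses over, is that the uniformity in $t$ holds for $t$ ranging over a compact subset of $W$, since both \eqref{q:uniform-cont-distribution-solution} and the bounds on the $x$-derivatives of $\phi$ and of $\det Z_x$ are compactness statements; your appeal to $\overline W$ being compact should be read in that localized sense.
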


\noindent Every characterization via control of the decay/growth of the FBI transform is based on an inversion formula. The one that we will use here is not quite the same as in \cite{trevesbook}, so we will present its proof. We start recalling the following inversion formula (usefull when dealing with holomorphic extendability, see \cite{trevesbook}):

\begin{prop}\label{prop:FBI-intermidiate-inversion-formula}
Let $u\in \mathcal{C}^\infty(W;\mathcal{E}^\prime(V))$. For every $\epsilon>0$ set 

\begin{equation}\label{eq:u_epsilon-FBI-inversion-intermediate}
    u_\epsilon(x,t)\doteq \frac{1}{(2\pi)^{m}}\int_{\left.\mathbb{R}\mathrm{T}^\prime_{\mathfrak{W}_t}\right|_{Z(x,t)}}e^{-\epsilon\langle\zeta\rangle^2}\mathfrak{F}^\frac{1}{2}[u](t;Z(x,t),\zeta)\mathrm{d}\zeta.
\end{equation}

\noindent Then $u_\epsilon(x,t)\rightarrow u(x,t)$ in $\mathcal{C}^\infty(W;\mathcal{D}^\prime(V))$.

\end{prop}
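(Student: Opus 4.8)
\section*{Proof proposal}

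The plan is to adapt the proof of the classical FBI inversion formula (Proposition IX.$2.2$ of \cite{trevesbook}), the new features being the parameter $t$, the weight $\lambda=\tfrac12$ in the Gaussian, and the fact that one integrates over the moving real $m$--plane $\left.\mathbb{R}\mathrm{T}^\prime_{\mathfrak{W}_t}\right|_{Z(x,t)}$ instead of over $\mathbb{R}^m$. First I would dispose of the soft part of the statement. The substitution $\zeta={}^{\mathrm{t}}Z_x(x,t)^{-1}\xi$, $\xi\in\mathbb{R}^m$, rewrites the defining integral for $u_\epsilon$ as an integral over $\mathbb{R}^m$ whose integrand is smooth in $(x,t)$ and, for each $\epsilon>0$, dominated by $e^{-\epsilon\langle\zeta\rangle^2}$ times the polynomial growth of $\mathfrak{F}^{1/2}[u]$; hence $u_\epsilon\in\mathcal{C}^\infty(W;\mathcal{D}^\prime(V))$, and by Leibniz $\partial_t^\beta u_\epsilon$ is a finite sum of integrals of the same form, with $u$ replaced by some $\partial_t^{\beta^\prime}u\in\mathcal{C}^\infty(W;\mathcal{E}^\prime(V))$ and the amplitude by one of its $t$--derivatives, still holomorphic in $(z,\zeta)$ and still obeying the bounds coming from \eqref{eq:well-positioned}--\eqref{eq:1/2-well-positioned} locally uniformly in $t$. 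Thus it suffices to prove, for each fixed $t$ and each $\psi\in\mathcal{C}^\infty_c(V)$, that $\langle u_\epsilon(\cdot,t),\psi\rangle\to\langle u(\cdot,t),\psi\rangle$ with an error bound that is locally uniform in $t$.

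Fix $t$ and write $w=Z(x,t)-Z(x^\prime,t)$. Performing the above change of variables and inserting the definition of $\mathfrak{F}^{1/2}[u]$ gives
\[
\langle u_\epsilon(\cdot,t),\psi\rangle=\big\langle u(x^\prime,t),\,G_\epsilon^\psi(x^\prime;t)\big\rangle,\qquad
G_\epsilon^\psi(x^\prime;t)=\int_V\psi(x)\,G_\epsilon(x,x^\prime;t)\,\mathrm{d}Z(x,t),
\]
\[
G_\epsilon(x,x^\prime;t)=\frac{1}{(2\pi)^m}\int_{\mathbb{R}^m}e^{i\zeta\cdot w-\frac12\langle\zeta\rangle\langle w\rangle^2-\epsilon\langle\zeta\rangle^2}\,\Delta(\tfrac12 w,\zeta)\,\det Z_x(x^\prime,t)\,\mathrm{d}\xi,\qquad \zeta={}^{\mathrm{t}}Z_x(x,t)^{-1}\xi .
\]
Interchanging the $u$--pairing with the $x$-- and $\xi$--integrals is legitimate because the cone condition in \eqref{eq:well-positioned} yields $\Re\langle\zeta\rangle^2\gtrsim|\xi|^2$, so $e^{-\epsilon\langle\zeta\rangle^2}$ beats the polynomial growth of the rest, and the continuity estimate \eqref{q:uniform-cont-distribution-solution} applies uniformly for $t$ in a compact set (with \eqref{eq:first-bound-FBI} giving the needed a priori control). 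The proposition therefore reduces to the claim that $G_\epsilon^\psi(x^\prime;t)\to\psi(x^\prime)$ in $\mathcal{C}^\infty(V)$ as $\epsilon\to0^+$, locally uniformly in $t$; that is, that the kernel $G_\epsilon(x,x^\prime;t)\,\mathrm{d}Z(x,t)$ is an approximate identity on the diagonal $x=x^\prime$.

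To prove this I would split $\psi=\chi\psi+(1-\chi)\psi$ with $\chi\equiv1$ near $x^\prime$ and $\operatorname{supp}\chi\subset\{|x-x^\prime|<\rho\}$. On $\operatorname{supp}(1-\chi)\psi$ one has $|w|\ge|x-x^\prime|\ge\rho/2$, and \eqref{eq:1/2-well-positioned} (this is exactly where the $\tfrac12$--sharpened inequality, rather than \eqref{eq:well-positioned}, is used) gives
\[
\Re\big\{i\zeta\cdot w-\tfrac12\langle\zeta\rangle\langle w\rangle^2\big\}=-\Im\big\{\zeta\cdot w+\tfrac{i}{2}\langle\zeta\rangle\langle w\rangle^2\big\}\le-c|\zeta|\,|w|^2\le-c^\prime\rho^2|\xi| ,
\]
so this part of $G_\epsilon$ is dominated, uniformly in $\epsilon$, by a locally integrable function of $x$, converges to its $\epsilon=0$ value, and its contribution disappears in the iterated limit $\epsilon\to0^+$, $\rho\to0^+$ (here one invokes the off--diagonal cancellation produced by the model computation of \cite{trevesbook}, the factor $\Delta$ being designed precisely for that; in our curved setting it is obtained from the flat case by deforming the $\xi$--contour inside $\mathfrak{C}_1$, permissible because the integrand is holomorphic there, the fibre lies in $\mathfrak{C}_\kappa$, and the decay is controlled by \eqref{eq:1/2-well-positioned}, \eqref{eq:estimate-re-langle-zeta-rangle} and the $Ce^{R|\zeta|}$ bound of Proposition \ref{prop:basic-estimates-maximally-real-FBI}). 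On $\operatorname{supp}\chi\psi$ one substitutes $x=x^\prime-y$, $|y|<\rho$, uses $w=Z_x(x^\prime,t)y+O(|y|^2)$, $\zeta\cdot w=\xi\cdot y+O(|y|^2)$, $\Re\langle w\rangle^2\gtrsim|y|^2$, $\langle\zeta\rangle=|\xi|(1+O(\mu))$ and $\Delta(\tfrac12 w,\zeta)=1+\tfrac{i}{2}\xi\cdot y/|\xi|+O(|y|)$, so that the leading term is exactly the Euclidean FBI approximate identity, whose $\epsilon\to0$ limit evaluates at $y=0$ and recovers $\psi(x^\prime)$ (the $\det Z_x$ Jacobians cancelling against the normalisation built into $\Delta$), the remaining terms being of higher order in $\rho$. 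Combining the two regions gives $G_\epsilon^\psi(x^\prime;t)\to\psi(x^\prime)$ and hence $\langle u_\epsilon(\cdot,t),\psi\rangle\to\langle u(\cdot,t),\psi\rangle$.

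The main obstacle I anticipate is precisely carrying the flat--model computation — the off--diagonal cancellation and the unit mass produced by $\Delta$ — over to the curved, fibred, $t$--parametrised situation: one must check that the contour deformations reducing us to the model can be chosen smoothly and uniformly in $(x,t)$, that they are compatible with the $\epsilon$--regularisation, and that every perturbation term (the imaginary part of $w$ coming from $\phi$, the discrepancy between $\langle\zeta\rangle$ and $|\xi|$, $\Delta-1$, the Jacobians $\det Z_x$) is genuinely lower order in the iterated limit. Everything else — the reduction to a single $t$, the passage of $t$--derivatives through the integral, and the distributional nature of $u$ in $x$ — is routine once the above estimates are seen to be locally uniform in $t$ and of the form recorded in \eqref{q:uniform-cont-distribution-solution} and \eqref{eq:first-bound-FBI}.
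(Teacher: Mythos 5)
The paper does not actually prove Proposition~\ref{prop:FBI-intermidiate-inversion-formula}; it states it with the words ``we start recalling the following inversion formula\dots see \cite{trevesbook},'' so this is intended as a direct adaptation of Proposition~IX.$2.2$ of \cite{trevesbook} (with $\lambda=\tfrac12$, a $t$-parameter, and the fiber of $\mathbb{R}\mathrm{T}^\prime_{\mathfrak{W}_t}$ replacing $\mathbb{R}^m$) rather than a result proved in the paper. Your proposal therefore cannot be matched against a proof in the text, but the route you sketch --- reduce to a fixed $t$ and a test function, pull the distributional pairing through the integrals, show the resulting kernel in $x^\prime$ is an approximate identity, split near/far diagonal, get off-diagonal decay from \eqref{eq:1/2-well-positioned}, and reduce to the flat model near the diagonal by a contour deformation inside $\mathfrak{C}_1$ --- is exactly the standard Treves argument being invoked, so the strategy is the right one.

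Two concrete points need fixing. First, your remark that ``the $\det Z_x$ Jacobians cancel against the normalisation built into $\Delta$'' is not correct: since $z\odot\zeta$ has rank one, $\Delta(z,\zeta)=\det(\mathrm{Id}+i(z\odot\zeta)/\langle\zeta\rangle)=1+i\,z\cdot\zeta/\langle\zeta\rangle$, and its role is to make the shifted Gaussian moment integral close up exactly (this is exactly the identity used in the paper's proof of the next theorem), not to absorb a Jacobian. The Jacobian $\det Z_x(x^\prime,t)$ coming from $\mathrm{d}Z(x^\prime,t)$ in the definition of $\mathfrak{F}^{1/2}[u]$ cancels against the Jacobian $\det\big({}^{\mathrm t}Z_x(x,t)\big)^{-1}$ produced by parametrising the fiber $\left.\mathbb{R}\mathrm{T}^\prime_{\mathfrak{W}_t}\right|_{Z(x,t)}$ by $\zeta={}^{\mathrm t}Z_x(x,t)^{-1}\xi$; these agree on the diagonal $x=x^\prime$ and differ by $O(|x-x^\prime|)$ nearby, which is why the flat-model leading term does recover $\psi(x^\prime)$. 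Relatedly, the outer measure in your formula for $G_\epsilon^\psi$ should be $\mathrm{d}x$, not $\mathrm{d}Z(x,t)$, since $\langle u_\epsilon(\cdot,t),\psi\rangle=\int_V u_\epsilon(x,t)\psi(x)\,\mathrm{d}x$. Second, the heavy lifting --- the flat-model computation carried over to the curved, $t$-dependent setting, uniformly in $(x,t)$ and compatibly with the $\epsilon$-regularisation --- is described but not executed; you are candid about this, but as written the argument contains the outline of a proof rather than a proof. For the iterated-limit bookkeeping, it would help to state explicitly that the quantity $\langle u_\epsilon(\cdot,t)-u(\cdot,t),\psi\rangle$ is independent of $\rho$, is bounded by $o_\epsilon(1)+O(\rho)$, and hence must vanish in the limit.
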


\begin{rmk}
The integral \eqref{eq:u_epsilon-FBI-inversion-intermediate} is to be interpreted as 

\begin{equation*}
    \lim_{\delta\to 0^+}\int_{\{\zeta\in\left.\mathbb{R}\mathrm{T}^\prime_{\mathfrak{W}_t}\right|_{Z(x,t)}\;:\;|\zeta|>\delta\}}e^{-\epsilon\langle\zeta\rangle^2}\mathfrak{F}^\frac{1}{2}[u](t;Z(x,t),\zeta)\mathrm{d}\zeta.
\end{equation*}

\end{rmk}

\noindent We shall use this inversion formula to prove the one that we will actually use. But before doing so we need to extend the function $Z(x,t)$ with respect to the variable $x$ to the whole $\mathbb{R}^m$:\\

Let $V_1\Subset V$ and let $\psi\in\mathcal{C}^\infty_c(V)$ satisfying

\begin{align*}
    &0\leq\psi\leq 1\\
    &\psi\equiv 1\quad \text{in}\;V_1.
\end{align*}

\noindent Define $\widetilde{Z}(x,t)\doteq x+i\psi(x)\phi(x,t)$. Then $\widetilde{Z}$ defines the hypo-analytic structure in $V_1\times W$, but $\widetilde{Z}(x,t)$ is defined for all $x\in\mathbb{R}^m$. Also note that ${}^t\widetilde{Z}_x(x,t)^{-1}=(\mathrm{Id}-i{}^t(\psi\phi)_x(x,t))(\mathrm{Id}+{}^t(\psi\phi)_x(x,t)^2)^{-1}$. We can choose $V_1, W_1$ small enough so that ${}^t\widetilde{Z}_x(x,t)$ is invertible for all $x\in\mathbb{R}^m$ and $t\in W_1$. From now on we shall write $Z(x,t)$ instead of $\widetilde{Z}(x,t)$, and $V$ and $W$ instead of $V_1$ and $W_1$. So now

\begin{equation*}
    \mathbb{R}\mathrm{T}^\prime_{\mathfrak{W}_t}=\{(z,\zeta)\in\mathbb{C}^m\times\mathfrak{C}_1\;:\;z=Z(x,t),\;\zeta={}^tZ_x(x,t)^{-1}\xi\;\textrm{for some}\;(x,\xi)\in\mathbb{R}^m\times \mathbb{R}^m\},
\end{equation*}

\noindent and we can also assume that the inequality \eqref{eq:bound-derivative-phi} is valid for all $x\in\mathbb{R}^m$. Note that \eqref{eq:well-positioned} is still valid for $(x,t)\in V\times W$. Now we can prove the following inversion formula for the FBI transform:

\begin{thm}
Let $u\in\mathcal{C}^\infty(W;\mathcal{E}^\prime(V))$. Then

\begin{equation}\label{eq:FBI-inversion-formula}
    u(x,t)=\lim_{\epsilon\to 0^+}\frac{1}{(2\pi^3)^\frac{m}{2}}\iint_{\mathbb{R}\mathrm{T}^\prime_{\mathfrak{W}_t}}e^{i\zeta\cdot(Z(x,t)-z^\prime)-\langle\zeta\rangle\langle Z(x,t)-z^\prime\rangle^2-\epsilon\langle\zeta\rangle^2}\mathfrak{F}[u](t;z^\prime,\zeta)\langle\zeta\rangle^\frac{m}{2}\mathrm{d}z^\prime\wedge\mathrm{d}\zeta,
\end{equation}

\noindent where the convergence takes place in $\mathcal{C}^\infty(W;\mathcal{D}^\prime(V))$

\end{thm}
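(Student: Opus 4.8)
The plan is to deduce \eqref{eq:FBI-inversion-formula} from the intermediate inversion formula of Proposition~\ref{prop:FBI-intermidiate-inversion-formula}. Since that result already gives $u=\lim_{\epsilon\to 0^+}u_\epsilon$ in $\mathcal{C}^\infty(W;\mathcal{D}'(V))$, it suffices to show that, for each fixed $\epsilon>0$, the function $u_\epsilon(x,t)$ coincides with the integral on the right of \eqref{eq:FBI-inversion-formula} before the limit, and then let $\epsilon\to 0^+$.

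The heart of the matter is a Gaussian reproducing identity linking $\mathfrak{F}^{1/2}$ with $\mathfrak{F}=\mathfrak{F}^1$: for $x\in V$, $z=Z(x,t)$, and $\zeta\in\left.\mathbb{R}\mathrm{T}'_{\mathfrak{W}_t}\right|_{z}$,
\begin{equation*}
\mathfrak{F}^{\frac12}[u](t;z,\zeta)=\left(\frac{2}{\pi}\right)^{\!\frac m2}\langle\zeta\rangle^{\frac m2}\int_{\mathfrak{W}_t}e^{i\zeta\cdot(z-z')-\langle\zeta\rangle\langle z-z'\rangle^2}\,\mathfrak{F}[u](t;z',\zeta)\,\mathrm{d}z'.
\end{equation*}
To prove it I would substitute the definition of $\mathfrak{F}[u]$, pull the distributional pairing in the innermost variable outside the $z'$-integration — legitimate since, at fixed $\zeta$, that integral converges absolutely and may be differentiated under the integral sign — and apply the polarization identity $\langle z-z'\rangle^2+\langle z'-z''\rangle^2=2\big\langle z'-\tfrac{z+z''}{2}\big\rangle^2+\tfrac12\langle z-z''\rangle^2$ to split off the Gaussian $e^{-\frac12\langle\zeta\rangle\langle z-z''\rangle^2}$ that appears in $\mathfrak{F}^{1/2}$. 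There remains the integral $\int_{\mathfrak{W}_t}e^{-2\langle\zeta\rangle\langle z'-\frac{z+z''}{2}\rangle^2}\Delta(z'-z'',\zeta)\,\mathrm{d}z'$; since $\Delta(y,\zeta)=\det(\mathrm{Id}+i(y\odot\zeta)/\langle\zeta\rangle)=1+i(y\cdot\zeta)/\langle\zeta\rangle$ is affine in $y$, after deforming the contour $\mathfrak{W}_t$ down to $\mathbb{R}^m$ — which is precisely why $Z(\cdot,t)$ was extended to all of $\mathbb{R}^m$ above, and where the well-positioned estimates \eqref{eq:well-positioned} and \eqref{eq:1/2-well-positioned} enter to keep the Gaussian decaying along the homotopy of contours — the part linear in $z'-\tfrac{z+z''}{2}$ integrates to zero by oddness and the remaining scalar Gaussian equals $\left(\tfrac{\pi}{2\langle\zeta\rangle}\right)^{m/2}$. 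Reassembling gives $\left(\tfrac{\pi}{2}\right)^{m/2}\langle\zeta\rangle^{-m/2}\mathfrak{F}^{1/2}[u](t;z,\zeta)$, which proves the identity and simultaneously accounts for the constant $(2\pi^3)^{-m/2}=(2\pi)^{-m}(2/\pi)^{m/2}$.

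Inserting this identity into \eqref{eq:u_epsilon-FBI-inversion-intermediate} yields
\begin{equation*}
u_\epsilon(x,t)=\frac{1}{(2\pi^3)^{m/2}}\int_{\left.\mathbb{R}\mathrm{T}'_{\mathfrak{W}_t}\right|_{Z(x,t)}}\!\int_{\mathfrak{W}_t}e^{i\zeta\cdot(Z(x,t)-z')-\langle\zeta\rangle\langle Z(x,t)-z'\rangle^2-\epsilon\langle\zeta\rangle^2}\,\mathfrak{F}[u](t;z',\zeta)\,\langle\zeta\rangle^{m/2}\,\mathrm{d}z'\,\mathrm{d}\zeta,
\end{equation*}
where for now the $\zeta$-integration lives on the single fibre of $\mathbb{R}\mathrm{T}'_{\mathfrak{W}_t}$ above $Z(x,t)$. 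To turn this into the integral over the whole bundle $\mathbb{R}\mathrm{T}'_{\mathfrak{W}_t}$ required by \eqref{eq:FBI-inversion-formula}, I would, for each fixed $z'=Z(x',t)\in\mathfrak{W}_t$, deform the inner $\zeta$-contour from the fibre $\{{}^tZ_x(x,t)^{-1}\xi:\xi\in\mathbb{R}^m\}$ over $Z(x,t)$ to the fibre $\{{}^tZ_x(x',t)^{-1}\xi:\xi\in\mathbb{R}^m\}$ over $z'$. This is admissible for $\epsilon>0$: the integrand is holomorphic in $\zeta\in\mathfrak{C}_1$, both fibres lie in a common subcone $\mathfrak{C}_\kappa$ (using $\mathrm{d}_x\phi(0)=0$ and the smallness of $V$), $\mathfrak{F}[u](t;z',\cdot)$ grows at most polynomially by \eqref{eq:first-bound-FBI}, and $e^{-\epsilon\langle\zeta\rangle^2}$ decays like $e^{-c\epsilon|\zeta|^2}$ on $\mathfrak{C}_\kappa$ by \eqref{eq:estimate-re-langle-zeta-rangle}, so the contributions at infinity and along the homotopy vanish. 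Collecting the deformed fibres back together (the Jacobians combine into $\mathrm{d}z'\wedge\mathrm{d}\zeta$ on the bundle) identifies $u_\epsilon(x,t)$ with the $\epsilon$-regularized right-hand side of \eqref{eq:FBI-inversion-formula}, and $\epsilon\to 0^+$ concludes.

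The step I expect to be the main obstacle is the careful, uniform-in-$t$ justification of the two contour deformations — the collapse of $\mathfrak{W}_t$ onto $\mathbb{R}^m$ inside the reproducing identity and the shift of the $\zeta$-fibre from $Z(x,t)$ to $z'$ — together with the accompanying interchanges of integration and the passage to the distributional limit; by contrast, the algebraic skeleton (the polarization identity, the affine nature of $\Delta$, and the bookkeeping of constants and Jacobians) is routine once those analytic points are settled.
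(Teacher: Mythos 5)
Your proof is essentially the paper's proof run in the opposite direction: the paper starts from the right-hand side of \eqref{eq:FBI-inversion-formula}, deforms the $\zeta$-contour from the fibre over $z'$ to the fibre over $Z(x,t)$, performs the Gaussian $z'$-integration via the polarization identity, and lands on $u_\epsilon$; you instead start from $u_\epsilon$, first establish the ``reproducing identity'' for $\mathfrak{F}^{1/2}$ in terms of $\mathfrak{F}$ (using the same polarization identity and the same Gaussian computation with $\Delta$), and then deform the $\zeta$-contour outward from the fibre over $Z(x,t)$ to the fibre over $z'$. Isolating the reproducing identity as a standalone step is a clean reorganization, and your observation that $\Delta(y,\zeta)=1+i(y\cdot\zeta)/\langle\zeta\rangle$ (via the matrix determinant lemma) is a small simplification over the paper's ``degree one in each variable'' formulation.

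One small inaccuracy worth fixing: to justify the $\zeta$-contour deformation for $\epsilon>0$ you invoke \eqref{eq:first-bound-FBI}, a polynomial bound valid only on the real structure bundle. The intermediate contours in the homotopy lie off the bundle (inside a cone $\mathfrak{C}_\kappa$ but not on the fibres), so the estimate you actually need is the exponential bound $|\mathfrak{F}[u](z,\zeta)|\le Ce^{R|\zeta|}$ for $\zeta\in\mathfrak{C}_\kappa$ from Proposition~\ref{prop:basic-estimates-maximally-real-FBI}. The quadratic Gaussian decay $e^{-c\epsilon|\zeta|^2}$ beats this exponential growth, so the deformation is still legitimate, but the citation should be changed. (The same remark applies to the oscillatory factor $e^{i\zeta\cdot(Z(x,t)-z')-\langle\zeta\rangle\langle Z(x,t)-z'\rangle^2}$, whose modulus can grow exponentially in $|\zeta|$ off the bundle but is likewise dominated by $e^{-c\epsilon|\zeta|^2}$.)
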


\noindent The proof of this theorem is very close to the one of Lemma IX.$4.1.$ of \cite{trevesbook}.

\begin{proof}
For simplicity let us assume that $u$ is a continuous function. Then for every $\epsilon>0$ we must deal with the integral

\begin{align*}
   & \frac{1}{(2\pi^3)^\frac{m}{2}}\iint_{\mathbb{R}\mathrm{T}^\prime_{\mathfrak{W}_t}}\int_{V}e^{i\zeta\cdot(z^\prime-Z(x^{\prime\prime},t))-\langle\zeta\rangle\langle z^\prime-Z(x^{\prime\prime},t)\rangle^2}e^{i\zeta\cdot(Z(x,t)-z^\prime)-\langle\zeta\rangle\langle Z(x,t)-z^\prime\rangle^2-\epsilon\langle\zeta\rangle^2}\cdot\\
    &\hspace{3cm}\cdot u(x^{\prime\prime},t)\langle\zeta\rangle^\frac{m}{2}\Delta(z^\prime-Z(x^{\prime\prime},t),\zeta)\mathrm{d}Z(x^{\prime\prime},t)\mathrm{d}z^\prime\wedge\mathrm{d}\zeta=\\
    &= \frac{1}{(2\pi^3)^\frac{m}{2}}\iiint_{V\times\left.\mathbb{R}\mathrm{T}^\prime_{\mathfrak{W}_t}\right|_{Z(x^\prime,t)}\times\mathbb{R}^m}e^{i\zeta\cdot(Z(x,t)-Z(x^{\prime\prime},t))-\langle\zeta\rangle\langle Z(x^\prime,t)-Z(x^{\prime\prime},t)\rangle^2-\langle\zeta\rangle\langle Z(x,t)-Z(x^\prime,t)\rangle^2-\epsilon\langle\zeta\rangle^2}\cdot\\
    &\hspace{3cm}\cdot u(x^{\prime\prime},t)\langle\zeta\rangle^\frac{m}{2}\Delta(Z(x^\prime,t)-Z(x^{\prime\prime},t),\zeta)\mathrm{d}Z(x^{\prime\prime},t)\mathrm{d}\zeta\mathrm{d}Z(x^\prime,t)
\end{align*}

\noindent First we change the domain of the integration in the variable $\zeta$ from $\left.\mathbb{R}\mathrm{T}^\prime_{\mathfrak{W}_t}\right|_{Z(x^\prime,t)}$ to $\left.\mathbb{R}\mathrm{T}^\prime_{\mathfrak{W}_t}\right|_{Z(x,t)}$. So we can change the order of integration and integrate in $Z(x^\prime,t)$ first, and we shall calculate

\begin{equation}\label{eq:gaussian-inversion-formula-proof}
   \frac{\langle\zeta\rangle^\frac{m}{2}}{\pi^\frac{m}{2}} \int_{\mathbb{R}^m}e^{-\langle\zeta\rangle [\langle Z(x^\prime,t)-Z(x^{\prime\prime},t)\rangle^2+\langle Z(x,t)-Z(x^\prime,t)\rangle^2]}\Delta(Z(x^\prime,t)-Z(x^{\prime\prime},t),\zeta)\mathrm{d}Z(x^\prime,t).
\end{equation}

\noindent To do so we start noticing that 

\begin{equation*}
     \frac{\omega^\frac{m}{2}}{\pi^\frac{m}{2}} \int_{\mathbb{R}^m}e^{-\omega\langle Z(x^\prime,t)-z\rangle^2}\mathrm{d}Z(x^\prime,t)=1,
\end{equation*}

\noindent for every $\omega\in\mathbb{C}$, with $\Re\; \omega>0$, and $z\in\mathbb{C}^m$, here note that the imaginary part of $Z(x,t)$ has compact support, and also that

\begin{equation*}
     \frac{\omega^\frac{m}{2}}{\pi^\frac{m}{2}} \int_{\mathbb{R}^m}e^{-\omega\langle Z(x^\prime,t)\rangle^2}P(Z(x^\prime,t))\mathrm{d}Z(x^\prime,t)=0,
\end{equation*}

\noindent for every $P(z)$ polynomial such that it has degree one (exactly one) when viewed as a polynomial in each variable separately (in view of Fubini's Theorem). Therefore 

\begin{equation*}
     \frac{\omega^\frac{m}{2}}{\pi^\frac{m}{2}} \int_{\mathbb{R}^m}e^{-\omega\langle Z(x^\prime,t)-z\rangle^2}\Delta(Z(x^\prime,t)-\tilde{z},\zeta)\mathrm{d}Z(x^\prime,t)=\Delta(z-\tilde{z},\zeta),
\end{equation*}

\noindent for every $z,\tilde{z}\in\mathbb{C}^m$. To use this identity we must rewrite $\langle Z(x^\prime,t)-Z(x^{\prime\prime},t)\rangle^2+\langle Z(x,t)-Z(x^\prime,t)\rangle^2$. We start noticing that

\begin{align*}
    \left\langle Z(x^\prime,t)-\frac{(Z(x,t)+Z(x^{\prime\prime},t))}{2}\right\rangle^2&= \left\langle \frac{Z(x^\prime,t)-Z(x,t)}{2}+\frac{Z(x^\prime,t)-Z(x^{\prime\prime},t)}{2}\right\rangle^2\\
    &=\frac{1}{4}\langle Z(x^\prime,t)-Z(x,t)\rangle^2+\frac{1}{4}\langle Z(x^\prime,t)-Z(x^{\prime\prime},t)\rangle^2+\\
    &\quad+\frac{1}{2}(Z(x^\prime,t)-Z(x,t))\cdot(Z(x^\prime,t)-Z(x^{\prime\prime},t))\\
    &=\frac{1}{4}\langle Z(x^\prime,t)-Z(x,t)\rangle^2+\frac{1}{4}\langle Z(x^\prime,t)-Z(x^{\prime\prime},t)\rangle^2-\\
    &\quad-\frac{1}{2}(Z(x,t)-Z(x^\prime,t))\cdot(Z(x^\prime,t)-Z(x^{\prime\prime},t)).
\end{align*}

\noindent So we have obtained the following identity

\begin{align*}
    \langle Z(x^\prime,t)-Z(x,t)\rangle^2+\langle Z(x^\prime,t)-Z(x^{\prime\prime},t)\rangle^2&=4\left\langle Z(x^\prime,t)-\frac{(Z(x,t)+Z(x^{\prime\prime},t))}{2}\right\rangle^2+\\
    &\quad+2(Z(x,t)-Z(x^\prime,t))\cdot(Z(x^\prime,t)-Z(x^{\prime\prime},t)).
\end{align*}

\noindent Also note that 

\begin{align*}
    \langle Z(x,t)-Z(x^{\prime\prime},t)\rangle^2&= \langle Z(x,t)-Z(x^\prime,t)+Z(x^\prime,t)-Z(x^{\prime\prime},t)\rangle^2\\
    &= \langle Z(x^\prime,t)-Z(x,t)\rangle^2+\langle Z(x^\prime,t)-Z(x^{\prime\prime},t)\rangle^2+\\
    &\quad+2(Z(x,t)-Z(x^\prime,t))\cdot(Z(x^\prime,t)-Z(x^{\prime\prime},t)).
\end{align*}

\noindent Summing up these two identities we have that

\begin{align}\label{eq:identity_sum_two_gaussians}
    \langle Z(x^\prime,t)-Z(x,t)\rangle^2+&\langle Z(x^\prime,t)-Z(x^{\prime\prime},t)\rangle^2=\\\nonumber
    &=2\left\langle Z(x^\prime,t)-\frac{(Z(x,t)+Z(x^{\prime\prime},t))}{2}\right\rangle^2+\\\nonumber
    &+\frac{1}{2}\langle Z(x,t)-Z(x^{\prime\prime},t)\rangle^2.
\end{align}

\noindent Now we can calculate \eqref{eq:gaussian-inversion-formula-proof}:

\begin{align*}
     \frac{\langle\zeta\rangle^\frac{m}{2}}{\pi^\frac{m}{2}} \int_{\mathbb{R}^m}&e^{-\langle\zeta\rangle [\langle Z(x^\prime,t)-Z(x^{\prime\prime},t)\rangle^2+\langle Z(x,t)-Z(x^\prime,t)\rangle^2]}\Delta(Z(x^\prime,t)-Z(x^{\prime\prime},t),\zeta)\mathrm{d}Z(x^\prime,t)=\\
     &=e^{-\frac{1}{2}\langle\zeta\rangle\langle Z(x,t)-Z(x^{\prime\prime},t)\rangle^2}\frac{\langle\zeta\rangle^\frac{m}{2}}{\pi^\frac{m}{2}} \int_{\mathbb{R}^m}e^{-2\langle\zeta\rangle\left\langle Z(x^\prime,t)-\frac{(Z(x,t)+Z(x^{\prime\prime},t))}{2}\right\rangle^2 }\cdot\\
     &\quad\cdot\Delta(Z(x^\prime,t)-Z(x^{\prime\prime},t),\zeta)\mathrm{d}Z(x^\prime,t)\\
     &=\frac{e^{-\frac{1}{2}\langle\zeta\rangle\langle Z(x,t)-Z(x^{\prime\prime},t)\rangle^2}}{2^\frac{m}{2}}\Delta\left(\left(\frac{Z(x,t)-Z(x^{\prime\prime},t)}{2}\right),\zeta\right).
\end{align*}

\noindent So let $\epsilon>0$. We have that

\begin{align*}
    \frac{1}{(2\pi^3)^\frac{m}{2}}&\iiint_{V\times\left.\mathbb{R}\mathrm{T}^\prime_{\mathfrak{W}_t}\right|_{Z(x^\prime,t)}\times\mathbb{R}^m}e^{i\zeta\cdot(Z(x,t)-Z(x^{\prime\prime},t))-\langle\zeta\rangle\langle Z(x^\prime,t)-Z(x^{\prime\prime},t)\rangle^2-\langle\zeta\rangle\langle Z(x,t)-Z(x^\prime,t)\rangle^2-\epsilon\langle\zeta\rangle^2}\cdot\\
    &\cdot u(x^{\prime\prime},t)\langle\zeta\rangle^\frac{m}{2}\Delta(Z(x^\prime,t)-Z(x^{\prime\prime},t),\zeta)\mathrm{d}Z(x^{\prime\prime},t)\mathrm{d}\zeta\mathrm{d}Z(x^\prime,t)=\\
    &=\frac{1}{(4\pi^2)^{\frac{m}{2}}}\int_{\left.\mathbb{R}\mathrm{T}^\prime_{\mathfrak{W}_t}\right|_{Z(x^\prime,t)}}\int_Ve^{i\zeta\cdot(Z(x,t)-Z(x^{\prime\prime},t))-\frac{1}{2}\langle\zeta\rangle\langle Z(x,t)-Z(x^{\prime\prime},t)\rangle^2-\epsilon\langle\zeta\rangle^2}u(x^{\prime\prime},t)\cdot\\
    &\cdot\Delta\left(\left(\frac{Z(x,t)-Z(x^{\prime\prime},t)}{2}\right),\zeta\right)\mathrm{d}Z(x^{\prime\prime},t)\mathrm{d}\zeta=\\
    &=\frac{1}{(2\pi)^m}\int_{\left.\mathbb{R}\mathrm{T}^\prime_{\mathfrak{W}_t}\right|_{Z(x^\prime,t)}}e^{-\epsilon\langle\zeta\rangle^2}\mathfrak{F}^{\frac{1}{2}}[u](t;Z(x,t),\zeta)\mathrm{d}\zeta\\
    &\underset{\epsilon\to 0^+}{\longrightarrow}u(x,t).
\end{align*}

\end{proof}

Now we can state the main theorem of this work:

\begin{thm}\label{thm:gevrey-FBI-hypo}
Let $\mathcal{V}$ be a locally integrable structure on $\Omega\subset\mathbb{R}^N$, and let $p\in\Omega$ be an arbitrary point. Consider $(V\times W, x_1,\dots, x_m, t_1,\dots t_n)$ a local coordinate system vanishing at $p$, as described above. Let $u\in\mathcal{C}^\infty(W;\mathcal{D}^\prime(V))$ be a solution of

\begin{equation*}
    \begin{cases}
    \mathrm{L}_1 u = f_1,\\
    \qquad\vdots\\
    \mathrm{L}_n u = f_n,
    \end{cases}
\end{equation*}

\noindent where $f_j\in\mathrm{G}^s(U;\mathrm{L}_1,\cdots,\mathrm{L}_n,\mathrm{M}_1,\dots,\mathrm{M}_m)$, $j=1,\dots,n$. The following are equivalent

\begin{enumerate}
    \item There exist $V_0\subset V$, $W_0\subset W$ open balls containing the origin such that $u|_{V_0\times W_0}\in\mathrm{G}^s(V_0\times W_0;\mathrm{L}_1,\cdots,\mathrm{L}_n,\mathrm{M}_1,\cdots,\mathrm{M}_m)$;\\
    \item There exists $V_1\Subset V$ an open ball centered at the origin such that for every $\chi\in\mathcal{C}_c^\infty(V_1)$, with $0\leq\chi\leq 1$ and $\chi\equiv 1$ in some open neighborhood of the origin, there exist $\widetilde{V}\subset V_1$, $\widetilde{W}\subset W$, open balls centered at the origin, and constants $C,\epsilon>0$ such that
    
    \begin{equation*}
        |\mathfrak{F}[\chi u](t;z,\zeta)|\leq Ce^{-\epsilon|\zeta|^{\frac{1}{s}}},\qquad \forall t\in \widetilde{W}, (z,\zeta)\in\left.\mathbb{R}\mathrm{T}^\prime_{\mathfrak{W}_t}\right|_{\widetilde{V}}\setminus 0,
    \end{equation*}
    
    \noindent where $(z,\zeta)\in\left.\mathbb{R}\mathrm{T}^\prime_{\mathfrak{W}_t}\right|_{\widetilde{V}}$ means that $z=Z(x,t)$, $\zeta={}^\mathrm{t}Z_x(x,t)^{-1}\xi$, $\xi\in\mathbb{R}^m\setminus 0$ and $x\in\widetilde{V}$;\\
    \item For every $\chi\in\mathcal{C}_c^\infty(V)$, with $0\leq\chi\leq 1$ and $\chi\equiv 1$ in some open neighborhood of the origin, there exist $\widetilde{V}\subset V$, $\widetilde{W}\subset W$, open balls centered at the origin, constants $C,\epsilon>0$ such that
    
    \begin{equation}\label{eq:FBI-decay}
        |\mathfrak{F}[\chi u](t;z,\zeta)|\leq Ce^{-\epsilon|\zeta|^{\frac{1}{s}}},\qquad \forall t\in \widetilde{W}, (z,\zeta)\in\left.\mathbb{R}\mathrm{T}^\prime_{\mathfrak{W}_t}\right|_{\widetilde{V}}\setminus 0,
    \end{equation}
    
    \noindent where $(z,\zeta)\in\left.\mathbb{R}\mathrm{T}^\prime_{\mathfrak{W}_t}\right|_{\widetilde{V}}$ means that $z=Z(x,t)$, $\zeta={}^\mathrm{t}Z_x(x,t)^{-1}\xi$, $\xi\in\mathbb{R}^m\setminus 0$ and $x\in\widetilde{V}$
\end{enumerate}
\end{thm}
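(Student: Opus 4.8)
The strategy is to prove the cycle of implications $(1)\Rightarrow(3)\Rightarrow(2)\Rightarrow(1)$. The step $(3)\Rightarrow(2)$ is trivial: fix any ball $V_1\Subset V$; since $\mathcal{C}^\infty_c(V_1)\subset\mathcal{C}^\infty_c(V)$, the estimate granted by $(3)$ for $\chi\in\mathcal{C}^\infty_c(V_1)$ is exactly the one required in $(2)$. It is worth noting that the non-homogeneous equation will be used only in $(2)\Rightarrow(1)$, and only to reduce matters to bounding $\mathrm{M}^\beta u$: since the fields pairwise commute, for $\alpha=\alpha^\prime+e_j\neq 0$ (with $e_j$ the $j$-th coordinate vector) one has $\mathrm{L}^\alpha\mathrm{M}^\beta u=\mathrm{M}^\beta\mathrm{L}^{\alpha^\prime}\mathrm{L}_ju=\mathrm{M}^\beta\mathrm{L}^{\alpha^\prime}f_j$, which already satisfies the Gevrey-$s$ bound $C^{|\alpha|+|\beta|+1}\alpha!^s\beta!^s$ near the origin because $f_j\in\mathrm{G}^s(U;\mathrm{L}_1,\dots,\mathrm{L}_n,\mathrm{M}_1,\dots,\mathrm{M}_m)$; thus in that step it suffices to control $\mathrm{M}^\beta u$ alone.

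For $(1)\Rightarrow(3)$ I would fix $\chi\in\mathcal{C}^\infty_c(V)$ with $\chi\equiv1$ near the origin, pick $\psi\in\mathcal{C}^\infty_c\big(V_0\cap\{\chi\equiv1\}\big)$ with $\psi\equiv1$ on a ball $\widetilde V$ about the origin, and write $\chi u=\psi u+(1-\psi)\chi u$. For the far piece $(1-\psi)\chi u$, whose support avoids the origin, one has $|z-z^\prime|\geq c_0>0$ whenever $z=Z(x,t)$ with $x\in\widetilde V$ and $z^\prime$ lies in the support, so \eqref{eq:well-positioned} gives $\Re\{i\zeta\cdot(z-z^\prime)-\langle\zeta\rangle\langle z-z^\prime\rangle^2\}\leq-cc_0^2|\zeta|$; with the boundedness of $\Delta$ and the uniform order estimate \eqref{q:uniform-cont-distribution-solution} this yields $|\mathfrak{F}[(1-\psi)\chi u](t;z,\zeta)|\leq C(1+|\zeta|)^ke^{-cc_0^2|\zeta|}\leq C^\prime e^{-\epsilon|\zeta|^{1/s}}$. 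For the near piece, Corollary \ref{cor:caetano-decay} on $V_0\times W_0$ furnishes an almost analytic extension $F$ of $u$ satisfying \eqref{eq:caetano-decay}; extending $\psi$ to a function $\widetilde\psi$ of $z^\prime$ flat on $\mathbb{R}^m$, the product $\widetilde\psi F$ represents $\widetilde{\psi u}$ on $\mathfrak{W}_t$ and inherits a bound of the type \eqref{eq:caetano-decay} on $\partial_{\overline{z^\prime}}(\widetilde\psi F)$. Viewing $\mathfrak{F}[\psi u](t;z,\zeta)$ as a kernel holomorphic in $z^\prime$ integrated over $\mathfrak{W}_t$ against $(\widetilde\psi F)\,\mathrm{d}z^\prime$, I would apply Stokes' theorem to replace $\mathfrak{W}_t$, over the region $|x^\prime-x|\leq\rho_0\Subset\supp\psi$, by a contour $\Gamma_\delta$ obtained by pushing $z^\prime$ off $\mathfrak{W}_t$ by an amount $\delta$ in the direction $-\overline{\zeta}/|\zeta|$, at the cost of the term $-\int_R(\text{kernel})\,\partial_{\overline{z^\prime}}(\widetilde\psi F)\wedge\mathrm{d}z^\prime$ over the swept region $R$. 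A short computation with \eqref{eq:estimate-re-langle-zeta-rangle} and \eqref{eq:1/2-well-positioned} shows that displacing by $\delta$ produces a phase $\leq-c\delta|\zeta|$ near $z^\prime=z$ and $\leq-c|\zeta||x-x^\prime|^2$ away from it, so the $\Gamma_\delta$-integral is $O\big((1+|\zeta|)^Me^{-c\delta|\zeta|}\big)$; on $R$ one has $\mathrm{dist}(z^\prime;\mathfrak{W}_t)\lesssim\delta$, so minimising \eqref{eq:caetano-decay} over $k$ gives $|\partial_{\overline{z^\prime}}(\widetilde\psi F)|\lesssim e^{-c\delta^{-1/(s-1)}}$ and an error $O\big((1+|\zeta|)^M\delta\, e^{-c\delta^{-1/(s-1)}}\big)$. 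Choosing $\delta=\delta(\zeta)\sim|\zeta|^{-(s-1)/s}$, so that $\delta|\zeta|\sim\delta^{-1/(s-1)}\sim|\zeta|^{1/s}$, makes both contributions $\leq Ce^{-\epsilon|\zeta|^{1/s}}$; for $|\zeta|$ bounded the estimate follows trivially from \eqref{eq:first-bound-FBI}. This gives \eqref{eq:FBI-decay}.

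For $(2)\Rightarrow(1)$ I would fix $\chi\in\mathcal{C}^\infty_c(V_1)$ with $\chi\equiv1$ near the origin, take $\widetilde V,\widetilde W,C,\epsilon$ as in $(2)$, and work on a ball $V_0\Subset\widetilde V$ about the origin where $\chi\equiv1$, so that $u=\chi u$ there and the inversion formula \eqref{eq:FBI-inversion-formula} applies to $\chi u$. Since the integrand there is holomorphic in $z=Z(x,t)$ and $\mathrm{M}_jZ_k=\delta_{jk}$, one has $\mathrm{M}^\beta_x\big[g(Z(x,t))\big]=(\partial_z^\beta g)(Z(x,t))$, so applying $\mathrm{M}^\beta_x$ simply replaces the kernel $e^{i\zeta\cdot(Z(x,t)-z^\prime)-\langle\zeta\rangle\langle Z(x,t)-z^\prime\rangle^2}$ by its $\partial_z^\beta$. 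A Cauchy/Hermite estimate, using \eqref{eq:well-positioned} and \eqref{eq:estimate-re-langle-zeta-rangle}, bounds $|\partial_z^\beta(\text{kernel})|$ by $C^{|\beta|}\big(|\zeta|^{|\beta|}+|\zeta|^{|\beta|/2}\sqrt{|\beta|!}\big)e^{-c|\zeta||z-z^\prime|^2}$. Splitting the integral over $\mathbb{R}\mathrm{T}^\prime_{\mathfrak{W}_t}$ into the region $x^\prime\in\widetilde V$, where $(2)$ gives $|\mathfrak{F}[\chi u]|\leq Ce^{-\epsilon|\zeta|^{1/s}}$, and the region $x^\prime\notin\widetilde V$, where $|z-z^\prime|\geq c_0$ so the Gaussian absorbs the polynomial bound \eqref{eq:first-bound-FBI}, dominated convergence lets one discard the regulariser $e^{-\epsilon\langle\zeta\rangle^2}$; the surviving integral is estimated, via the substitution $|\zeta|^{1/s}=\sigma$ and Stirling, by $C^{|\beta|+1}\big((|\beta|!)^s+\sqrt{|\beta|!}\,((|\beta|/2)!)^s\big)$, which, using $((|\beta|/2)!)^2\leq|\beta|!$, $s\geq1$, and the multinomial inequality $(|\beta|!)^s\leq m^{s|\beta|}\beta!^s$, is $\leq C^{|\beta|+1}\beta!^s$. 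Hence $\sup|\mathrm{M}^\beta u|\leq C^{|\beta|+1}\beta!^s$ near the origin, and the absolute convergence of these integrals shows moreover that $u$ is $\mathcal{C}^\infty$ there; together with the reduction noted at the start, $u\in\mathrm{G}^s(V_0\times W_0;\mathrm{L}_1,\dots,\mathrm{L}_n,\mathrm{M}_1,\dots,\mathrm{M}_m)$, which is $(1)$.

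The hardest part will be the contour deformation in $(1)\Rightarrow(3)$: arranging a deformation of size $\delta$ that degenerates with $|\zeta|$ precisely at the rate $|\zeta|^{-(s-1)/s}$, keeping $\Gamma_\delta$ and the swept region inside the domain of the almost analytic extension (which holds only once $|\zeta|$ is large, so small $|\zeta|$ must be handled separately through \eqref{eq:first-bound-FBI}), checking that the well-positioned inequalities \eqref{eq:well-positioned}--\eqref{eq:1/2-well-positioned} survive the displacement so the phase keeps the sign $\leq-c\delta|\zeta|$, and performing the optimisation over $\delta$ simultaneously with the optimisation over the number $k$ of Taylor terms controlling $\partial_{\overline{z^\prime}}(\widetilde\psi F)$ in \eqref{eq:caetano-decay}.
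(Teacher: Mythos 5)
Your cyclic order $(1)\Rightarrow(3)\Rightarrow(2)\Rightarrow(1)$ differs from the paper's $(1)\Rightarrow(2)\Rightarrow(3)\Rightarrow(1)$, but the underlying ideas coincide: the decomposition into a ``near'' piece supported where $u$ is already Gevrey plus a ``far'' piece whose support avoids the origin appears in your $(1)\Rightarrow(3)$ exactly as it appears in the paper's $(2)\Rightarrow(3)$, and the structure of the inversion-formula argument in your $(2)\Rightarrow(1)$ matches the paper's $(3)\Rightarrow(1)$. Your use of Cauchy/Hermite estimates for $\partial_z^\beta$ of the kernel, rather than the paper's explicit Fa\`a di Bruno computation \eqref{eq:derivative-gaussian}, is a fair substitute.

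There are, however, two genuine gaps.

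\emph{The deformation direction in $(1)\Rightarrow(3)$.} You propose to push $z^\prime$ off $\mathfrak{W}_t$ by $\delta$ in the direction $-\overline{\zeta}/|\zeta|$ and claim this produces a linear phase gain $\leq -c\delta|\zeta|$. But if $z^\prime\mapsto z^\prime-\delta\,\overline{\zeta}/|\zeta|$ then $i\zeta\cdot(z-z^\prime)$ changes by $i\delta\,\zeta\cdot\overline{\zeta}/|\zeta|=i\delta|\zeta|$, which is \emph{purely imaginary} --- there is no decay at all from the linear term. The correct direction is the one used in the paper's map $\Theta_\lambda$, namely $-i\,\zeta/\langle\zeta\rangle$ (or, essentially equivalently, $-i\,\overline{\zeta}/|\zeta|$): then the linear phase gains $-\delta\langle\zeta\rangle$, whose real part is bounded below by a constant times $\delta|\zeta|$ thanks to \eqref{eq:estimate-re-langle-zeta-rangle}. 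Note also that the paper performs Stokes once with a \emph{fixed} small $\lambda$ and then optimises over the Taylor order $k$ in \eqref{eq:caetano-decay} (the $\sigma$-integral over $[0,\lambda]$ automatically localises to $\sigma\lesssim 1/|\zeta|$); your two-parameter optimisation over both $\delta(\zeta)$ and $k$ is logically fine but duplicates work the fixed-$\lambda$ strategy already does for free.

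\emph{The far region in $(2)\Rightarrow(1)$.} You split $\mathbb{R}\mathrm{T}^\prime_{\mathfrak{W}_t}$ into $x^\prime\in\widetilde V$ and $x^\prime\notin\widetilde V$ and claim that in the second region the Gaussian factor absorbs the polynomial bound \eqref{eq:first-bound-FBI}. This conflates two regimes that need different justifications. For $x^\prime$ in the annulus where it is still inside $V$, the positivity $\Im\{\zeta\cdot(z-z^\prime)+i\langle\zeta\rangle\langle z-z^\prime\rangle^2\}\geq c|\zeta||z-z^\prime|^2$ comes from \eqref{eq:well-positioned} and is legitimate. But \eqref{eq:well-positioned} is established only for $x,x^\prime\in V$, and once $x^\prime$ leaves $V$ the estimate is no longer available as stated; after the cutoff extension $Z(x^\prime,t)=x^\prime$ becomes real and the fibre of the real structure bundle degenerates to $\mathbb{R}^m$, so the geometry changes. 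The paper therefore isolates $Q_t^3=\{|x^\prime|\geq r_0\}$, deforms the $\zeta$-contour from $\left.\mathbb{R}\mathrm{T}^\prime_{\mathfrak{W}_t}\right|_{Z(x^\prime,t)}$ to $\mathbb{R}^m$, writes the FBI transform out as a pairing, re-estimates the exponent directly via \eqref{eq:bound-derivative-phi}, and then invokes Montel's theorem to pass to the limit $\epsilon\to 0$. Your ``dominated convergence discards the regulariser'' phrase is adequate for the near piece (where \eqref{eq:FBI-decay} gives integrability uniformly in $\epsilon$) but does not by itself handle $Q_t^3$; you must at minimum verify there that $\Re\big(\langle\zeta\rangle\langle z-z^\prime\rangle^2\big)\gtrsim |z-z^\prime|^2|\zeta|$ holds uniformly for $x^\prime$ outside $V$, and justify that the limiting object is an analytic (hence Gevrey) vector in $z=Z(x,t)$, which is what the paper obtains from the holomorphy of $\mathrm{G}_3(z,t)$.
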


\noindent Before proving this theorem we shall derive a formula for the derivatives of the Gaussian:

\begin{lem}
Let $\lambda>0$ and $\alpha\in\mathbb{Z}_+^m$. Then

\begin{equation}\label{eq:derivative-gaussian}
    \partial_x^\alpha e^{-\lambda|x|^2}=\sum_{l^1_1+2l^1_2=\alpha_1}\cdots\sum_{l^m_1+2l^m_2=\alpha_m}\frac{\alpha!}{l^1_1!l^1_2!\cdots l^m_1!l^m_2!}(-\lambda)^{l^1_1+l^1_2+\cdots+ l^m_1+l^m_2}(2x_1)^{l^1_1}\cdots(2x_m)^{l^m_1}e^{-\lambda|x|^2}.
\end{equation}
\end{lem}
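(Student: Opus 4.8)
The plan is to reduce the multivariate identity to a one-variable statement and then argue by induction. Since $e^{-\lambda|x|^2}=\prod_{j=1}^m e^{-\lambda x_j^2}$ and $\partial_x^\alpha=\partial_{x_1}^{\alpha_1}\cdots\partial_{x_m}^{\alpha_m}$, the left-hand side of \eqref{eq:derivative-gaussian} factors as a product over $j$ of one-variable derivatives $\big(\tfrac{d^{\alpha_j}}{dt^{\alpha_j}}e^{-\lambda t^2}\big)\big|_{t=x_j}$. Thus it suffices to prove that for $\lambda>0$, $k\in\mathbb{Z}_+$ and $t\in\mathbb{R}$,
\[
  \frac{d^k}{dt^k}e^{-\lambda t^2}=\sum_{l_1+2l_2=k}\frac{k!}{l_1!\,l_2!}(-\lambda)^{l_1+l_2}(2t)^{l_1}e^{-\lambda t^2}.
\]

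First I would check the base case $k=0$, where only $(l_1,l_2)=(0,0)$ contributes and the formula reads $e^{-\lambda t^2}=e^{-\lambda t^2}$. For the inductive step, assuming the formula for $k$, I differentiate term by term using $\tfrac{d}{dt}\big[(2t)^{l_1}e^{-\lambda t^2}\big]=2l_1(2t)^{l_1-1}e^{-\lambda t^2}-\lambda(2t)^{l_1+1}e^{-\lambda t^2}$. This yields two families of terms; reindexing the first via $(l_1,l_2)\mapsto(l_1-1,l_2+1)$ and the second via $(l_1,l_2)\mapsto(l_1+1,l_2)$, in both cases the new pair $(l_1',l_2')$ satisfies $l_1'+2l_2'=k+1$, and the power of $-\lambda$ is $l_1'+l_2'$. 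Collecting the coefficient of a fixed $(l_1',l_2')$, one obtains
\[
  \frac{2\,k!}{l_1'!\,(l_2'-1)!}+\frac{k!}{(l_1'-1)!\,l_2'!}=\frac{k!}{l_1'!\,l_2'!}\big(2l_2'+l_1'\big)=\frac{k!}{l_1'!\,l_2'!}(k+1)=\frac{(k+1)!}{l_1'!\,l_2'!},
\]
with the usual convention that a term containing a factorial of a negative integer is zero; this is exactly the claimed coefficient for $k+1$, closing the induction.

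To obtain \eqref{eq:derivative-gaussian} itself, I substitute the one-variable formula into each of the $m$ factors of $\partial_x^\alpha e^{-\lambda|x|^2}=\prod_{j=1}^m\big(\tfrac{d^{\alpha_j}}{dt^{\alpha_j}}e^{-\lambda t^2}\big)\big|_{t=x_j}$ and expand the resulting product of $m$ sums; the factor $\alpha!=\alpha_1!\cdots\alpha_m!$, the monomial $(2x_1)^{l_1^1}\cdots(2x_m)^{l_1^m}$, and the Gaussian $e^{-\lambda|x|^2}$ all assemble from the product, giving the right-hand side. There is no genuine obstacle here; the only point demanding a little care is the bookkeeping in the inductive reindexing — in particular verifying that the boundary cases $l_1'=0$ and $l_2'=0$ are correctly handled by the factorial convention, which they are since the corresponding differentiated terms ($2l_1(2t)^{l_1-1}$ with $l_1=0$, respectively the absence of a source pair with $l_2=-1$) vanish on the nose.
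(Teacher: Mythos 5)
Your proof is correct, and the reduction to the one-variable identity via the product structure $e^{-\lambda|x|^2}=\prod_j e^{-\lambda x_j^2}$ is the same as in the paper. Where you diverge is in establishing the one-variable formula itself: the paper writes $e^{-\lambda(\cdots+t^2+\cdots)}=g\circ h$ with $g(s)=e^{-\lambda s}$ and $h(t)=\cdots+t^2+\cdots$ and then applies Fa\`a di Bruno's formula, noting that since $h''\equiv 2$ and $h^{(i)}\equiv 0$ for $i\ge 3$, the usual sum over partitions collapses to the two-index sum over $l_1+2l_2=\alpha_j$. You instead prove the one-variable identity directly by induction on the order of the derivative, differentiating term by term and reindexing, which avoids invoking Fa\`a di Bruno altogether. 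The paper's route is shorter once one accepts the combinatorial formula; yours is more elementary and self-contained, and the coefficient computation $\tfrac{2k!}{l_1'!(l_2'-1)!}+\tfrac{k!}{(l_1'-1)!l_2'!}=\tfrac{k!}{l_1'!l_2'!}(2l_2'+l_1')=\tfrac{(k+1)!}{l_1'!l_2'!}$ together with the $1/(-1)!=0$ convention at the boundaries is exactly what is needed to close the induction. Both arguments are sound.
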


\begin{proof}
Let $x\in\mathbb{R}^m$ and $j=1,\dots,m$. Consider the function $f:\mathbb{R}\longrightarrow\mathbb{R}$ given by

\begin{equation*}
    f(t)=e^{-\lambda\{x_1^2+\cdots+x_{j-1}^2+t^2+x_{j+1}^2+\cdots+x_m^2\}}.
\end{equation*}

\noindent So $f=g\circ h(t)$, where $g(t)=e^{-\lambda t}$, and $h(t)=x_1^2+\cdots+x_{j-1}^2+t^2+x_{j+1}^2+\cdots+x_m^2$. By Fa\`{a} di Bruno's formula (see for instance \cite{bierstone}) we have that 

\begin{align*}
    \partial_{x_j}^{\alpha_j}e^{-\lambda|x|^2}&=f^{(\alpha_j)}(x_j)\\
    &=\sum_{\{l_1+2l_2+\cdots+\alpha_jl_{\alpha_j}=\alpha_j\}}\frac{\alpha_j!}{l_1!\cdots l_{\alpha_j}!}g^{(l_1+\cdots+ l_{\alpha_j)}}(h(x_j))\prod_{i=1}^{\alpha_j}\left(\frac{h^{(i)}(x_j)}{i!}\right)^{l_i}\\
    &=\sum_{l_1+2l_2=\alpha_j}\frac{\alpha_j!}{l_1!l_2!}(-\lambda)^{l_1+l_2}e^{-\lambda|x|^2}(2x_j)^{l_1}.
\end{align*}

\noindent Since the only term in the sum above that depends on the other variables $x_1,\dots,x_{j-1},x_{j+1},\dots,x_m$ is the Gaussian, we can apply this identity for each variable separately, obtaining \eqref{eq:derivative-gaussian}.

\end{proof}

\begin{proof}[Proof of the Theorem]

$\textit{1}.\Rightarrow\textit{2}.$\,:\\

\noindent By Corolary \ref{cor:caetano-decay} we have that there exist $\mathcal{O}\subset\mathbb{C}^m$ an open neighborhood of $\{Z(x,t)\;:\;x\in V_0,\; t\in W_0\}$ on $\mathbb{C}^m$, and  $F(z,t)\in\mathcal{C}^\infty(\mathcal{O}\times W_0)$ such that 

\begin{equation*}
    \begin{cases}
    F(Z(x,t),t)=u(x,t),\quad\forall (x,t)\in V_0\times W_0;\\
    \left|\partial_{\overline{z}}F(z,t)\right|\leq C^{k+1}k!^{s-1}\mathrm{dist}\,(z,\mathfrak{M}_t)^k,\quad\forall k>0, z\in\mathcal{O}, t\in W_0,
    \end{cases}
\end{equation*}

\noindent where $C$ is a positive constant, as in \eqref{eq:caetano-decay}. Set $V_1=V_0$ and let $\chi\in\mathcal{C}^\infty_c(V_1)$ be such that $0\leq\chi\leq1$ and $\chi\equiv 1$ in $V_2\Subset V_1$, an open ball centered at the origin.  We shall estimate

\begin{equation*}
    \mathfrak{F}[\chi u](t;z,\zeta)=\int_{V_1}e^{i\zeta\cdot(z-Z(x,t))-\langle\zeta\rangle\langle z-Z(x,t)\rangle^2}\chi(x)u(x,t)\Delta(z-Z(x,t),\zeta)\mathrm{d}Z(x,t).
\end{equation*}

\noindent To do so we shall deform the contour of integration. So let $(z,\zeta)\in\left.\mathbb{R}\mathrm{T}^\prime_{\mathfrak{M}_t}\right|_{\widetilde{V}}$ be fixed, where $t\in\widetilde{W}$, and $\widetilde{V}\Subset V$, $\widetilde{W}\Subset W$ are open balls centered at the origin to be chosen latter. Let $\lambda>0$ such that the image of the map

\begin{equation*}
    V_1\times W_0\ni (y,t)\mapsto \Theta_\lambda(y,t)\doteq Z(y,t)-i\lambda \mathbb{E}_{V_2}(y)\frac{\zeta}{\langle\zeta\rangle},
\end{equation*}

\noindent is contained in $\mathcal{O}$, where $\mathbb{E}_{V_2}$ is characteristic function of $V_2$. By Stokes theorem we obtain

\begin{align*}
     \mathfrak{F}[\chi u](t;z,\zeta)&=\int_{V_1\setminus V_2}e^{i\zeta\cdot(z-Z(x,t))-\langle\zeta\rangle\langle z-Z(x,t)\rangle^2}\chi(x)u(x,t)\Delta(z-Z(x,t),\zeta)\mathrm{d}Z(x,t)\\
     &\quad+\int_{V_2}e^{i\zeta\cdot(z-Z(x,t))-\langle\zeta\rangle\langle z-Z(x,t)\rangle^2}F(Z(x,t),t)\Delta(z-Z(x,t),\zeta)\mathrm{d}Z(x,t)\\
     &=\underbrace{\int_{V_1\setminus V_2}e^{i\zeta\cdot(z-Z(x,t))-\langle\zeta\rangle\langle z-Z(x,t)\rangle^2}\chi(x)u(x,t)\Delta(z-Z(x,t),\zeta)\mathrm{d}Z(x,t)}_{(1)}\\
     &\quad+\underbrace{\int_{ V_2}e^{i\zeta\cdot(z-\Theta_\lambda(x,t))-\langle\zeta\rangle\langle z-\Theta_\lambda(x,t)\rangle^2}F(\Theta_\lambda(x,t),t)\Delta(z-\Theta_\lambda(x,t),\zeta)\mathrm{d}Z(x,t)}_{(2)}\\
     &\quad+(-1)^{m-1}2i\int_0^\lambda\int_{V_2}e^{i\zeta\cdot(z-\Theta_\sigma(x,t))-\langle\zeta\rangle\langle z-\Theta_\sigma(x,t)\rangle^2}\overline{\partial_z}F(\Theta_\sigma(x,t),t)\cdot\frac{\zeta}{\langle\zeta\rangle}\mathrm{d}Z(x,t)\cdot\\
     &\quad\underbrace{\cdot\Delta(z-\Theta_\sigma(x,t),\zeta)\mathrm{d}\sigma-\hspace{9cm}}_{(3)}\\
     &-\underbrace{\int_0^\lambda\int_{\partial V_2}e^{i\zeta\cdot(z-\Theta_\sigma(x,t))-\langle\zeta\rangle\langle z-\Theta_\sigma(x,t)\rangle^2}F(\Theta_\sigma(x,t),t)\Delta(z-\Theta_\sigma(x,t),\zeta)\mathrm{d}S_{\mathfrak{M}_t}\mathrm{d}\sigma}_{(4)},
\end{align*}

\noindent where $\mathrm{d}S_{\mathfrak{M}_t}$ is the surface measure in $\{Z(x,t)\,:\,x\in\partial V_2\}$. We shall estimate these four integrals separately. Since the estimate for $(1)$ and $(4)$ are very similar, we will estimate them first. We start writing $\widetilde{V}=B_{r}(0)$, so $z=Z(x_0,t)$ for some $x_0\in B_r(0)$. In view of \eqref{eq:well-positioned} and 

\begin{equation*}
    |z-Z(x,t)|\geq |x_0-x|,
\end{equation*}

\noindent for every $x$,  we have that 

\begin{equation*}
    \Im\{\zeta\cdot (z-Z(x,t))+i\langle\zeta\rangle\langle z-Z(x,t)\rangle^2\}\geq c(r_2-r)^2|\zeta|,
\end{equation*}

\noindent for every $x\in V_1\setminus V_2$, where $V_2=B_{r_2}(0)$, and we are choosing $r<r_2$. Therefore

\begin{equation*}
    \left|\int_{V_1\setminus V_2}e^{i\zeta\cdot(z-Z(x,t))-\langle\zeta\rangle\langle z-Z(x,t)\rangle^2}\chi(x)u(x,t)\Delta(z-Z(x,t),\zeta)\mathrm{d}Z(x,t)\right|\leq Ce^{-c(r_2-r)^2|\zeta|}.
\end{equation*}

\noindent Now the exponent of $(4)$ can be written as

\begin{align*}
    i\zeta\cdot(z-\Theta_\sigma(x,t))-\langle\zeta\rangle\langle z-\Theta_\sigma(x,t)\rangle^2&=i\zeta\cdot(z-Z(x,t))-\sigma\langle\zeta\rangle-\langle\zeta\rangle\langle z-Z(x,t)\rangle^2+\\
    &\quad + \sigma^2\langle\zeta\rangle-2i\sigma(z-Z(x,t))\cdot\zeta.
\end{align*}

\noindent Now recall that in $(4)$ we are integrating in $\sigma$ from $0$ to $\lambda$, so $\sigma<\lambda$, and using \eqref{eq:well-positioned} and \eqref{eq:estimate-re-langle-zeta-rangle} we have that 

\begin{align*}
    \Im\{\zeta\cdot(z-\Theta_\sigma(x,t))+i\langle\zeta\rangle\langle z-\Theta_\sigma(x,t)\rangle^2\}&=\Im\{\zeta\cdot(z-Z(x,t))+i\langle\zeta\rangle\langle z-Z(x,t)\rangle^2\}\\
    &\quad+\sigma\Im\{i\langle\zeta\rangle(1-\sigma)-2(z-Z(x,t))\cdot\zeta\}\\
    &\geq c|z-Z(x,t)|^2|\zeta|+\sigma\Re\langle\zeta\rangle(1-\sigma)-\\&\quad-2\sigma\Im\{\zeta\cdot(z-Z(x,t))\}\\
    &\geq c|z-Z(x,t)|^2|\zeta|+\sigma\sqrt{\frac{1-\kappa^2}{1+\kappa^2}}(1-\sigma)|\zeta|-\\
    &\quad-2\sigma|\zeta||z-Z(x,t)|\\
    &\geq |\zeta||z-Z(x,t)|\left\{c|z-Z(x,t)|-2\lambda\right\}\\
    &\geq |\zeta||z-Z(x,t)|\left[c(r_2-r)-2\lambda\right]\\
    &\geq |\zeta|(r_2-r)\left[c(r_2-r)-2\lambda\right],
\end{align*}

\noindent where we are choosing $\lambda$ satisfying $2\lambda<c(r_2-r)$. Therefore

\begin{equation*}
    \left|\int_0^\lambda\int_{\partial V_1}e^{i\zeta\cdot(z-\Theta_\sigma(x,t))-\langle\zeta\rangle\langle z-\Theta_\sigma(x,t)\rangle^2}F(\Theta_\sigma(x,t),t)\Delta(z-\Theta_\sigma(x,t),\zeta)\mathrm{d}S_{\mathfrak{M}_t}\mathrm{\sigma}\right|\leq Ce^{-\epsilon_1|\zeta|},
\end{equation*}

\noindent where $\epsilon_1=(r_2-r)[c(r_2-r)-2\lambda]$. Before estimating $(2)$ and $(3)$, note that the exponent that appears in each of them is similar to the one that we have just estimated. In $(2)$ we have that $x\in V_2$, \textit{i.e.}, $|x|<r_2$, so the exponential have the following estimate:

\begin{align*}
    \left|e^{i\zeta\cdot(z-\Theta_\lambda(x,t))-\langle\zeta\rangle\langle z-\Theta_\lambda(x,t)\rangle^2}\right|&\leq e^{-\left\{c|z-Z(x,t)|^2|\zeta|+\lambda\sqrt{\frac{1-\kappa^2}{1+\kappa^2}}(1-\lambda)|\zeta|-2\lambda|\zeta||z-Z(x,t)|\right\}}\\
    &\leq e^{-|\zeta|\left\{\lambda\sqrt{\frac{1-\kappa^2}{1+\kappa^2}}(1-\lambda)+|z-Z(x,t)|\left[c|z-Z(x,t)|-2\lambda\right]\right\}}.
\end{align*}

\noindent When $c|z-Z(x,t)|\geq 2\lambda$ we have that

\begin{equation*}
 \left|e^{i\zeta\cdot(z-\Theta_\lambda(x,t))-\langle\zeta\rangle\langle z-\Theta_\lambda(x,t)\rangle^2}\right|\leq e^{-|\zeta|\lambda\sqrt{\frac{1-\kappa^2}{1+\kappa^2}}(1-\lambda)},
\end{equation*}

\noindent and when $c|z-Z(x,t)|\leq 2\lambda$,

\begin{align*}
    \left|e^{i\zeta\cdot(z-\Theta_\lambda(x,t))-\langle\zeta\rangle\langle z-\Theta_\lambda(x,t)\rangle^2}\right|&\leq e^{-|\zeta|\left\{\lambda\sqrt{\frac{1-\kappa^2}{1+\kappa^2}}(1-\lambda)-2\lambda|z-Z(x,t)|\right\}}\\
    &\leq e^{-|\zeta|\left\{\lambda\sqrt{\frac{1-\kappa^2}{1+\kappa^2}}(1-\lambda)-\frac{4\lambda^2}{c}\right\}}\\
    &\leq e^{-|\zeta|\lambda\left\{\sqrt{\frac{1-\kappa^2}{1+\kappa^2}}(1-\lambda)-\frac{4\lambda}{c}\right\}}.
\end{align*}

\noindent Combining these two estimates we conclude that

\begin{equation*}
   \left| \int_{ V_1}e^{i\zeta\cdot(z-\Theta_\lambda(x,t))-\langle\zeta\rangle\langle z-\Theta_\lambda(x,t)\rangle^2}F(\Theta_\lambda(x,t),t)\Delta(z-\Theta_\lambda(x,t),\zeta)\mathrm{d}Z(x,t)\right|\leq Ce^{-\lambda\epsilon_2|\zeta|},
\end{equation*}

\noindent where $\epsilon_2=\sqrt{\frac{1-\kappa^2}{1+\kappa^2}}(1-\lambda)-\frac{4\lambda}{c}>0$, decreasing $\lambda$ if necessary.  To estimate $(3)$ we reason as before, so for each $0<\sigma\leq \lambda$ we have that if $|z-Z(x,t)|\geq 2\sigma/c$ then

\begin{equation*}
 \left|e^{i\zeta\cdot(z-\Theta_\sigma(x,t))-\langle\zeta\rangle\langle z-\Theta_\sigma(x,t)\rangle^2}\right|\leq e^{-|\zeta|\sigma\sqrt{\frac{1-\kappa^2}{1+\kappa^2}}(1-\sigma)},
 \end{equation*}
 
 \noindent and if $|z-Z(x,t)|\leq 2\sigma/c$,
 
\begin{align*}
    \left|e^{i\zeta\cdot(z-\Theta_\sigma(x,t))-\langle\zeta\rangle\langle z-\Theta_\sigma(x,t)\rangle^2}\right|&\leq e^{-|\zeta|\left\{\sigma\sqrt{\frac{1-\kappa^2}{1+\kappa^2}}(1-\sigma)-2\sigma|z-Z(x,t)|\right\}}\\
    &\leq e^{-|\zeta|\left\{\sigma\sqrt{\frac{1-\kappa^2}{1+\kappa^2}}(1-\sigma)-\frac{4\sigma^2}{1-\kappa}\right\}}\\
    &\leq e^{-|\zeta|\sigma\left\{\sqrt{\frac{1-\kappa^2}{1+\kappa^2}}(1-\sigma)-\frac{4\sigma}{1-\kappa}\right\}},
\end{align*}

\noindent and since $\sqrt{\frac{1-\kappa^2}{1+\kappa^2}}(1-\sigma)-\frac{4\sigma}{c}\geq \epsilon_2$, for $\sigma<\lambda$, we have that

\begin{equation*}
    \left|e^{i\zeta\cdot(z-\Theta_\sigma(x,t))-\langle\zeta\rangle\langle z-\Theta_\sigma(x,t)\rangle^2}\right|\leq e^{-\sigma\epsilon_2|\zeta|},
\end{equation*}

\noindent for every $x\in V_1$. So for every $k>0$ we have that

\begin{align*}
    \Big|(-1)^{m-1}2i\int_0^\lambda\int_{V_1}&e^{i\zeta\cdot(z-\Theta_\sigma(x,t))-\langle\zeta\rangle\langle z-\Theta_\sigma(x,t)\rangle^2}\overline{\partial_z}F(\Theta_\sigma(x,t),t)\cdot\frac{\zeta}{\langle\zeta\rangle}\Delta(z-\Theta_\sigma(x,t),\zeta)\mathrm{d}Z(x,t)\mathrm{d}\sigma\Big|\leq\\
    &\leq \int_0^\lambda e^{-\sigma\epsilon_2|\zeta|} \sup_{(x,t)\in V_0\times W_0}\left|\overline{\partial_z}F(\Theta_\sigma(x,t),t)\Delta(z-\Theta_\sigma(x,t),\zeta)\right|\mathrm{d}\sigma\cdot\\
    &\quad\cdot2\left|\frac{|\zeta|}{\langle\zeta\rangle}\right|\left|\int_{V_1}\left|\mathrm{d}Z(x,t)\right|\right|\\
    &\leq C \int_0^\lambda e^{-\sigma\epsilon|\zeta|}C^{k+1}k!^{s-1}\mathrm{dist}\,(\Theta_\sigma(x,t),\mathfrak{M}_t)^k\mathrm{d}\sigma\\
    &\leq C^{k+1}k!^{s-1}\int_0^\infty e^{-\sigma\epsilon_2|\zeta|}\left|\frac{\sigma|\zeta|}{\langle\zeta\rangle}\right|^k\mathrm{d}\sigma\\
    &\leq C^{k+1}k!^{s-1}\int_0^\infty e^{-y}\left(\frac{y}{\epsilon_2|\zeta|}\right)^k\frac{1}{\epsilon_2|\zeta|}\mathrm{d}y\\
    &\leq C^{k+1}\frac{k!^s}{(\epsilon_2|\zeta|)^{k+1}}.
\end{align*}

\noindent Since the constant $C>0$ does not depend on $k$, and the above estimate holds for every $k>0$, we have that

\begin{equation*}
    \left|\int_0^\lambda\int_{V_1}e^{i\zeta\cdot(z-\Theta_\sigma(x,t))-\langle\zeta\rangle\langle z-\Theta_\sigma(x,t)\rangle^2}\overline{\partial_z}F(\Theta_\sigma(x,t),t)\cdot\frac{\zeta}{\langle\zeta\rangle}\Delta(z-\Theta_\sigma(x,t),\zeta)\mathrm{d}Z(x,t)\mathrm{d}\sigma\right|\leq Ce^{-\epsilon_3|\zeta|^{\frac{1}{s}}},
\end{equation*}

\noindent for some constants $C, \epsilon_3>0$. Summing up we have obtained the required estimate \eqref{eq:FBI-decay}, with $\widetilde{W}=W_0$, and $\widetilde{V}=B_r(0)$, where $r>0$ is any positive number less than $r_2$, the radius of $V_2$. 

$\textit{2}.\Rightarrow \textit{3}.$\,:
\\
Let $\chi\in\mathcal{C}^\infty_c(V)$, and $\chi_1\in\mathcal{C}^\infty_c(V_1)$ as in $\textit{2}.$ and $\textit{3}.$. Since $\chi\chi_1\in\mathcal{C}^\infty_c(V_1)$, and $\chi\chi_1\equiv 1$ in some open neighborhood of the origin. So we have that

\begin{equation*}
    |\mathfrak{F}[\chi\chi_1 u](t;z,\zeta)|\leq Ce^{-\epsilon|\zeta|^\frac{1}{s}}, \quad t\in \widetilde{W},\, (z,\zeta)\in\left.\mathbb{R}\mathrm{T}^\prime_{\mathfrak{W}_t}\right|_{\widetilde{V}}.
\end{equation*}

\noindent Now note that $\chi-\chi\chi_1\equiv 0$ in some open neighborhood of the origin $V_2\Subset V_1$. Write $V_2=B_\rho(0)$. So if $x^\prime\in B_{\frac{\rho}{2}}(0)$, $x\in V\setminus V_2$, $t\in W$, and $\zeta\in\left.\mathbb{R}\mathrm{T}^\prime_{\mathfrak{W}_t}\right|_x$, we have that

\begin{align*}
    \Im\{\zeta\cdot(Z(x,t)-Z(x^\prime,t))+i\langle\zeta\rangle\langle Z(x,t)-Z(x^\prime,t)\rangle^2\}&\geq c|Z(x,t)-Z(x^\prime,t)|^2|\zeta|\\
    &\geq c\frac{\rho^2}{4}|\zeta|.
\end{align*}

\noindent Therefore if we set $V_3=B_{\frac{\rho}{2}}(0)\cap \widetilde{V}$ we have that

\begin{equation*}
    |\mathfrak{F}[(\chi-\chi\chi_1)u](t;z,\zeta)|\leq Ce^{-\epsilon^\prime|\zeta|},\quad t\in W,\,(z,\zeta)\in\left.\mathbb{R}\mathrm{T}^\prime_{\mathfrak{W}_t}\right|_{V_3}.
\end{equation*}

\noindent Combining these two decays we obtain

\begin{equation*}
    |\mathfrak{F}[\chi u](t;z,\zeta)|\leq Ce^{-\widetilde{\epsilon}|\zeta|^\frac{1}{s}},\quad t\in \widetilde{W},\,(z,\zeta)\in\left.\mathbb{R}\mathrm{T}^\prime_{\mathfrak{W}_t}\right|_{V_3}.
\end{equation*}

$\textit{3}.\Rightarrow \textit{1}.$\,:
\\
Let $\widetilde{V}\subset V$ and $\widetilde{W}$ be open balls centered at the origin,  $\chi\in\mathcal{C}_c^\infty(V)$ with $0\leq \chi\leq 1$, and $\chi\equiv 1$ in an open ball centered at the origin, and $C,\tilde{\epsilon}>0$ for which the following estimate holds 
 \begin{equation*}
     |\mathfrak{F}[\chi u](t;z,\zeta)|\leq Ce^{-\tilde{\epsilon}|\zeta|^\frac{1}{s}},
 \end{equation*}
 
 \noindent for every $z=Z(x,t)$ and $\zeta={}^tZ_x(x,t)^{-1}\xi$, where $x\in \widetilde{V}$, $t\in \widetilde{W}$ and $\xi\in\mathbb{R}^m\setminus 0$.
 Note that we can choose $\mathrm{supp}\;\chi$ as small as we want, keeping in mind that $\widetilde{V}$ depends on $\chi$. 
 Since we already have that $\mathrm{L}_j u\in \mathrm{G}^s(U;\mathrm{L}_1,\dots,\mathrm{L}_n,\mathrm{M}_1,\dots,\mathrm{M}_m)$, we only have to prove that there exist $V_0\subset V$ and $W_0\subset W$, open balls centered at the origin, such that, writing $U_0=V_0\times W_0$, $u|_{U_0}\in\mathrm{G}^s(U_0;\mathrm{M}_1,\dots,\mathrm{M}_m)$, since the complex vector fields $\{\mathrm{L}_1,\dots,\mathrm{L}_n,\mathrm{M}_1,\dots,\mathrm{M}_m\}$ are pair-wise commuting. We write $V_0=B_r(0)$ and $W_0=B_\delta(0)$. By \eqref{eq:FBI-inversion-formula} we have that 

\begin{equation*}
    \chi(x)u(x,t)=\lim_{\epsilon\to 0^+}\frac{1}{(2\pi^3)^\frac{m}{2}}\iint_{\mathbb{R}\mathrm{T}^\prime_{\mathfrak{W}_t}}e^{i\zeta\cdot(Z(x,t)-z^\prime)-\langle\zeta\rangle\langle Z(x,t)-z^\prime\rangle^2-\epsilon\langle\zeta\rangle^2}\mathfrak{F}[\chi u](t;z^\prime,\zeta)\langle\zeta\rangle^\frac{m}{2}\mathrm{d}z^\prime\wedge\mathrm{d}\zeta.
\end{equation*}

\noindent We shall split this integral in three regions:

\begin{align*}
    &Q^1_t\doteq\{(z^\prime,\zeta)\,:\, z=Z(x^\prime,t),\;\zeta={}^t Z_x(x^\prime,t)^{-1}\xi,\;\text{for some}\;|x^\prime|< \tilde{r}\;\text{and}\;\xi\in\mathbb{R}^m\}\\
    &Q^2_t\doteq\{(z^\prime,\zeta)\,:\,z= Z(x^\prime,t),\;\zeta={}^t Z_x(x^\prime,t)^{-1}\xi, \;\text{for some}\;\tilde{r}\leq|x^\prime|<r_0\;\text{and}\;\xi\in\mathbb{R}^m\}\\
     &Q^3_t\doteq\{(z^\prime,\zeta)\,:\,z= Z(x^\prime,t),\;\zeta={}^t Z_x(x^\prime,t)^{-1}\xi, \;\text{for some}\;r_0\leq|x^\prime|\;\text{and}\;\xi\in\mathbb{R}^m\},
    \end{align*}

\noindent where $\tilde{r}$ and $r_0$ are the radii of $\widetilde{V}$ and $V$. For $\epsilon>0$ and $j=1, 2, 3$, we set

\begin{equation*}
    \mathrm{I}_j^\epsilon(x,t)\doteq\frac{1}{(2\pi^3)^\frac{m}{2}}\iint_{Q^j_t}e^{i\zeta\cdot(Z(x,t)-z^\prime)-\langle\zeta\rangle\langle Z(x,t)-z^\prime\rangle^2-\epsilon\langle\zeta\rangle^2}\mathfrak{F}[\chi u](t;z^\prime,\zeta)\langle\zeta\rangle^\frac{m}{2}\mathrm{d}z^\prime\wedge\mathrm{d}\zeta,
\end{equation*}

\noindent so we can write

\begin{equation*}
      \chi(x)u(x,t)=\lim_{\epsilon\to 0^+}\mathrm{I}_1^\epsilon(x,t)+\mathrm{I}_2^\epsilon(x,t)+\mathrm{I}_3^\epsilon(x,t)
\end{equation*}

\noindent To prove $\textit{1}.$ it is enough to prove the following: there exists a sequence $\{\epsilon_j\}_{j\in\mathbb{Z}_+}$ with $\epsilon_j\to 0$ such that $\mathrm{I}_2^{\epsilon_j}$ and $\mathrm{I}_3^{\epsilon_j}$ converge to analytic vectors for $\mathrm{M}_1,\dots,\mathrm{M}_m$, and that $\mathrm{I}_1^\epsilon$ converges to a Gevrey vector for $\mathrm{M}_1,\dots,\mathrm{M}_m$. To do so we shall prove that there exist $\mathrm{G}_2^\epsilon(z,t)$, $\mathrm{G}_3^\epsilon(z,t)$, $\mathrm{G}_2(z,t)$ and $\mathrm{G}_3(z,t)$, holomorphic functions in some open neighborhood of the origin such that $\mathrm{I}_2^\epsilon(x,t)=\mathrm{G}_2^\epsilon(Z(x,t),t)$, $\mathrm{I}_3^\epsilon(x,t)=\mathrm{G}_3^\epsilon(Z(x,t),t)$, and $\mathrm{G}_2^{\epsilon_j}(z,t)\longrightarrow \mathrm{G}_2(z,t)$ and $\mathrm{G}_3^{\epsilon_j}(z,t)\longrightarrow \mathrm{G}_3(z,t)$ uniformly in $z$, for some sequence $\{\epsilon_j\}_{j\in\mathbb{Z}_+}$ satisfying $\epsilon_j\to 0$, and we shall also prove that there exists a positive constant $C$ such that 

\begin{equation*}
    |\mathrm{M}^\alpha \mathrm{I}_1^\epsilon(x,t)|\leq C^{|\alpha|+1}\alpha!^s,\quad\forall\alpha\in\mathbb{Z}_+^m,
\end{equation*}

\noindent for all $(x,t)\in U_0$ and $\epsilon>0$. \\

$\mathrm{I}_2^\epsilon(x,t)$:\\

\noindent Let $(z^\prime,\zeta)\in Q_t^2$. Since $z^\prime=Z(x^\prime,t)$, with $x^\prime\in V$, we can use \eqref{eq:well-positioned} and \eqref{eq:bound-derivative-phi} to obtain

\begin{align*}
    \Im\{\zeta\cdot(Z(0,t)-Z(x^\prime,t))+i\langle\zeta\rangle\langle Z(0,t)-Z(x^\prime,t)\rangle^2\}&\geq c|\zeta||Z(0,t)-Z(x^\prime,t)|^2\\
    &\geq c|\zeta|(1-\mu^2)|x^\prime|^2\\
    &\geq c(1-\mu^2)\tilde{r}|\zeta|,
\end{align*}

\noindent in other words

\begin{equation*}
    \sup_{(z^\prime,\zeta)\in Q_t^2}\frac{ \Im\{\zeta\cdot(Z(0,t)-z^\prime)+i\langle\zeta\rangle\langle Z(0,t)-z^\prime\rangle^2\}}{|\zeta|}\geq c(1-\mu^2)\tilde{r},
\end{equation*}

\noindent and this is valid for every $t\in W$. So there are $\mathcal{O}_1\subset\mathbb{C}^m$ an open neighborhood of the origin and $W_1\Subset W$ an open neighborhood of the origin, such that

\begin{equation*}
    \sup_{(z^\prime,\zeta)\in Q_t^2}\frac{ \Im\{\zeta\cdot(z-z^\prime)+i\langle\zeta\rangle\langle z-z^\prime\rangle^2\}}{|\zeta|}\geq \frac{ c(1-\mu^2)\tilde{r}}{2},\quad\forall z\in\mathcal{O}_1, t\in W_1.
\end{equation*}

\noindent Now using \eqref{eq:first-bound-FBI} we obtain

\begin{equation}\label{eq:estimate-integrand-I_2}
    \left|e^{i\zeta\cdot(z-z^\prime)-\langle\zeta\rangle\langle z-z^\prime\rangle^2}\mathfrak{F}[\chi u](t;z^\prime,\zeta)\langle\zeta\rangle^\frac{m}{2}\right|\leq C(1+|\zeta|)^{k+\frac{m}{2}}e^{-\frac{c(1-\mu^2)\tilde{r}}{2}|\zeta|},
\end{equation}

\noindent for some $k\geq 0$ and for all $z\in \mathcal{O}_1$, $(z^\prime,\zeta)\in Q_t^2$, and $t\in W_1$. Now set 

\begin{equation*}
    \mathrm{G}_2^\epsilon(z,t)\doteq\frac{1}{(2\pi^3)^\frac{m}{2}}\iint_{Q^2_t}e^{i\zeta\cdot(z-z^\prime)-\langle\zeta\rangle\langle z-z^\prime\rangle^2-\epsilon\langle\zeta\rangle^2}\mathfrak{F}[\chi u](t;z^\prime,\zeta)\langle\zeta\rangle^\frac{m}{2}\mathrm{d}z^\prime\wedge\mathrm{d}\zeta,
\end{equation*}

\noindent and

\begin{equation*}
    \mathrm{G}_2(z,t)\doteq\frac{1}{(2\pi^3)^\frac{m}{2}}\iint_{Q^2_t}e^{i\zeta\cdot(z-z^\prime)-\langle\zeta\rangle\langle z-z^\prime\rangle^2}\mathfrak{F}[\chi u](t;z^\prime,\zeta)\langle\zeta\rangle^\frac{m}{2}\mathrm{d}z^\prime\wedge\mathrm{d}\zeta,
\end{equation*}

\noindent for $\epsilon>0$, $z\in\mathcal{O}_1$, and $t\in W_1$. Let $V_1\Subset V$ and $W_2\Subset W_1$ such that $\{Z(x,t)\;:\;(x,t)\in V_1\times W_2\}\subset \mathcal{O}_1$, so $\mathrm{G}_2^\epsilon(Z(x,t),t)=\mathrm{I}_2^\epsilon(x,t)$ for every $(x,t)\in V_1\times W_2$. Define $\mathrm{I}_2(x,t)\doteq\mathrm{G}_2(Z(x,t),t)$, for $(x,t)\in V_1\times W_2$. In view of \eqref{eq:estimate-integrand-I_2} we have that $\mathrm{G}_2^\epsilon(z,t)$ and $\mathrm{G}_2(z,t)$ are holomorphic with respect to $z$, and $\mathrm{G}_2^\epsilon(z,t)\longrightarrow\mathrm{G}_2(z,t)$ uniformly on $\mathcal{O}_1\times W_1$. \\

$\mathrm{I}_3^\epsilon(x,t)$:\\

\noindent We can deform the domain of integration with respect to the variable $\zeta$, moving the contour of the integration from $\left.\mathbb{R}\mathrm{T}^\prime_{\mathfrak{W}_t}\right|_{Z(x^\prime,t)}$ to $\mathbb{R}^m$, obtaining

\begin{align*}
    \mathrm{I}_3^\epsilon(x,t)&=\frac{1}{(2\pi^3)^\frac{m}{2}}\iint_{Q_t^3}e^{i\zeta\cdot(Z(x,t)-z^\prime)-\langle\zeta\rangle\langle Z(x,t)-z^\prime\rangle^2-\epsilon\langle\zeta\rangle^2}\mathfrak{F}[\chi u](t;z^\prime,\zeta)\langle\zeta\rangle^\frac{m}{2}\mathrm{d}z^\prime\wedge\mathrm{d}\zeta\\
    &=\frac{1}{(2\pi^3)^\frac{m}{2}}\int_{\mathbb{R}^m}\int_{r_0\leq|x^\prime|}e^{i\xi\cdot(Z(x,t)-Z(x^\prime,t))-|\xi|\langle Z(x,t)-Z(x^\prime,t)\rangle^2-\epsilon|\xi|^2}\mathfrak{F}[\chi u](t;Z(x^\prime,t),\xi)|\xi|^\frac{m}{2}\mathrm{d}Z(x^\prime,t)\mathrm{d}\xi\\
    &=\frac{1}{(2\pi^3)^\frac{m}{2}}\int_{\mathbb{R}^m}\int_{r_0\leq|x^\prime|}\Big\langle u(x^{\prime\prime},t), \chi(x^{\prime\prime})e^{i\xi\cdot(Z(x,t)-Z(x^{\prime\prime},t)))-|\xi|\big[\langle Z(x,t)-Z(x^\prime,t)\rangle^2+\langle Z(x^\prime,t)-Z(x^{\prime\prime},t)\rangle^2\big]}\cdot\\
    &\cdot e^{-\epsilon|\xi|^2}|\xi|^\frac{m}{2}\Delta(Z(x^\prime,t)-Z(x^{\prime\prime},t),\xi)\det Z_x(x^{\prime\prime},t)\Big\rangle\mathrm{d}Z(x^\prime,t)\mathrm{d}\xi.\\
\end{align*}

\noindent Now for every $\epsilon>0$ we set

\begin{align}\label{eq:defn-G_3-epsilon}
    \mathrm{G}_3^\epsilon(z,t)\doteq&\frac{1}{(2\pi^3)^\frac{m}{2}}\int_{\mathbb{R}^m}\int_{r_0\leq|x^\prime|}\Big\langle u(x^{\prime\prime},t), \chi(x^{\prime\prime})|\xi|^\frac{m}{2}\Delta(Z(x^\prime,t)-Z(x^{\prime\prime},t),\xi)\cdot\\\nonumber
    &\cdot e^{i\xi\cdot(z-Z(x^{\prime\prime},t)))-|\xi|\big[\langle z-Z(x^\prime,t)\rangle^2+\langle Z(x^\prime,t)-Z(x^{\prime\prime},t)\rangle^2\big]-\epsilon|\xi|^2}\det Z_x(x^{\prime\prime},t)\Big\rangle\mathrm{d}Z(x^\prime,t)\mathrm{d}\xi
\end{align}

\noindent for $z\in\mathbb{C}^m$, and $t\in W_2$. As usual, we begin estimating the exponential, but first for $z=Z(0,t)$:

\begin{align*}
    \left|e^{i\xi\cdot(Z(0,t)- Z(x^{\prime\prime},t)-|\xi|\big[\langle Z(0,t)- Z(x^\prime,t)\rangle^2+\langle Z(x^\prime,t)-Z(x^{\prime\prime},t)\rangle^2\big]}\right|&\leq e^{|\xi||\phi(0,t)-\phi(x^{\prime\prime},t)|-|\xi|\big[|x^{\prime}|^2-|\phi(0,t)-\phi(x^\prime,t)|^2\big]}\cdot\\
    &\cdot e^{-|\xi|\big[|x^\prime-x^{\prime\prime}|^2-|\phi(x^\prime,t)-\phi(x^{\prime\prime},t)|^2\big]}\\
    &\leq e^{|\xi|\mu|x^{\prime\prime}|-|\xi|\left[|x^{\prime}|^2-\mu^2|x^\prime|+|x^\prime-x^{\prime\prime}|^2-\mu^2|x^\prime-x^{\prime\prime}|^2\right]}\\
    &\leq e^{-|\xi|\big[(1-\mu^2)|x^\prime-x^{\prime\prime}|^2+(1-\mu^2)|x^\prime|^2-\mu|x^{\prime\prime}|\big]},
\end{align*}

\noindent where $x^{\prime\prime}\in \textrm{supp}\,\chi$, and $r_0\leq|x^\prime|$. Note that the previous argument (for $\mathrm{I_2^\epsilon}$) does not depend on the "size" of $\mathrm{supp}\,\chi$, therefore we can shrink it as we want to. So we can assume that $|x^{\prime\prime}|$ is small enough so 

\begin{equation*}
     \left|e^{i\xi\cdot(Z(0,t)- Z(x^{\prime\prime},t)-|\xi|\big[\langle Z(0,t)- Z(x^\prime,t)\rangle^2+\langle Z(x^\prime,t)-Z(x^{\prime\prime},t)\rangle^2\big]}\right|\leq e^{-|\xi|(1-\mu^2)|x^\prime|^2}.
\end{equation*}

\noindent Now, for $z\in\mathbb{C}^m$ we have that

\begin{align*}
     \bigg|&e^{i\xi\cdot(z- Z(x^{\prime\prime},t)-|\xi|\big[\langle z- Z(x^\prime,t)\rangle^2+\langle Z(x^\prime,t)-Z(x^{\prime\prime},t)\rangle^2\big]}\bigg|= \\
     &\hspace{2cm}=\left|e^{i\xi\cdot(Z(0,t)- Z(x^{\prime\prime},t)-|\xi|\big[\langle Z(0,t)- Z(x^\prime,t)\rangle^2+\langle Z(x^\prime,t)-Z(x^{\prime\prime},t)\rangle^2\big]}\right|\cdot\\
     &\hspace{2cm}\cdot \left|e^{i\xi\cdot(z-Z(0,t))-|\xi|\big[\langle z-Z(0,t)\rangle^2+2i(z-Z(0,t))\cdot(Z(0,t)-Z(x^\prime,t))\big]}\right|\\
     &\hspace{2cm}\leq  e^{-|\xi|(1-\mu^2)|x^\prime|^2}e^{|\xi||z-Z(0,t)|\big[1+|z-Z(0,t)|+2|Z(0,t)-Z(x^\prime,t)|\big]}\\
     &\hspace{2cm}\leq e^{-|\xi|(1-\mu^2)|x^\prime|^2}e^{|\xi||z-Z(0,t)|\big[1+|z-Z(0,t)|+2(1+\mu)|x^\prime|\big]}.
\end{align*}

\noindent By continuity we can choose $\rho>0$ such that if $|z-Z(0,t)|<\rho$, then 

\[\frac{(1-\mu^2)}{2}|x^\prime|^2-|z-Z(0,t)|\big[1+|z-Z(0,t)|+2(1+\mu)|x^\prime|\big]\geq 0,\quad\forall|x^\prime|\geq r_0.\]

\noindent We can shrink, if necessary, $W_2$, such that $\sup_{t\in W_2}|Z(0,t)|<\rho$. So if we define $\mathcal{O}_2\subset\mathbb{C}^m$ as

\begin{equation*}
    \mathcal{O}_2\doteq\left\{z\in\mathbb{C}^m\;:\sup_{t\in W_2}|z-Z(0,t)|<\rho\right\},
\end{equation*}

\noindent then for every $z\in\mathcal{O}_2$, $t\in W_2$, and $r_0\leq|x^\prime|$, we have that

\begin{equation*}
     \left|e^{i\xi\cdot(z- Z(x^{\prime\prime},t)-|\xi|\big[\langle z- Z(x^\prime,t)\rangle^2+\langle Z(x^\prime,t)-Z(x^{\prime\prime},t)\rangle^2\big]}\right|\leq e^{-|\xi|\frac{(1-\mu^2)}{2}|x^\prime|^2}.
\end{equation*}

\noindent  Since $\mathrm{supp}\,\chi$ and $\overline{W_2}$ are compact sets, there exist $k\in\mathbb{Z_+}$ and $C>0$, such that

\begin{align*}
    \Big|\Big\langle u(x^{\prime\prime},t),\chi(x^{\prime\prime})|\xi|^\frac{m}{2}&\Delta(Z(x^\prime,t)-Z(x^{\prime\prime},t),\xi)\det Z_x(x^{\prime\prime},t)\cdot\\
    &\cdot e^{i\xi\cdot(z-Z(x^{\prime\prime},t)))-|\xi|\big[\langle z-Z(x^\prime,t)\rangle^2+\langle Z(x^\prime,t)-Z(x^{\prime\prime},t)\rangle^2\big]-\epsilon|\xi|^2}\Big\rangle\Big|\leq\\
    &\leq C\sum_{|\alpha|\leq k}\sup_{x^{\prime\prime}\in\mathrm{supp}\,\chi}\Big|\partial_{x^{\prime\prime}}^\alpha\Big\{\chi(x^{\prime\prime})|\xi|^\frac{m}{2}\Delta(Z(x^\prime,t)-Z(x^{\prime\prime},t),\xi)\det Z_x(x^{\prime\prime},t)\cdot\\
    &\cdot e^{i\xi\cdot(z-Z(x^{\prime\prime},t)))-|\xi|\big[\langle z-Z(x^\prime,t)\rangle^2+\langle Z(x^\prime,t)-Z(x^{\prime\prime},t)\rangle^2\big]-\epsilon|\xi|^2}\Big\}\Big|\\
    &\leq C_1|\xi|^{k+\frac{m}{2}}e^{-|\xi|\frac{(1-\mu^2)}{2}|x^\prime|^2},
\end{align*}

\noindent for every $z\in\mathcal{O}_2$, and $t\in W_2$, where the constant $C_1>0$ depends on $\mathrm{supp}\,\chi$, and $k$. Therefore the integrand in \eqref{eq:defn-G_3-epsilon} is dominated by 

\begin{equation}\label{eq:integrand-G_3-dominated}
    C_1|\xi|^{k+\frac{m}{2}}e^{-|\xi|\frac{(1-\mu^2)}{4}|x^\prime|^2}e^{-|\xi|\frac{(1-\mu^2)}{4}r_0^2}.
\end{equation}

\noindent Now since the integral of $e^{-|\xi|\frac{(1-\mu^2)}{4}|x^\prime|^2}$, with respect to $x^\prime$, is bounded by a constant times $|\xi|^{-\frac{m}{2}}$, we have that \eqref{eq:integrand-G_3-dominated} is an integrable function with respect to $(x^\prime,\xi)$ in $\mathbb{R}^{m}\times\mathbb{R}^m$. Therefore by Montel's Theorem, we have that there exists a sequence $\{\epsilon_j\}_{j\in\mathbb{Z}_+}$, with $\epsilon_j\to 0$, such that $\mathrm{G}_3^{\epsilon_j}(z,t)\longrightarrow \mathrm{G}_3(z,t)$ uniformly in $\mathcal{O}_2\times W_2$, and $\mathrm{G}_3(z,t)$ is holomorphic with respect to $z$, and it is given by

\begin{align*}
     \mathrm{G}_3(z,t)\doteq&\frac{1}{(2\pi^3)^\frac{m}{2}}\int_{\mathbb{R}^m}\int_{r_0\leq|x^\prime|}\Big\langle u(x^{\prime\prime},t), \chi(x^{\prime\prime})|\xi|^\frac{m}{2}\Delta(Z(x^\prime,t)-Z(x^{\prime\prime},t),\xi)\cdot\\
    &\cdot e^{i\xi\cdot(z-Z(x^{\prime\prime},t)))-|\xi|\big[\langle z-Z(x^\prime,t)\rangle^2+\langle Z(x^\prime,t)-Z(x^{\prime\prime},t)\rangle^2\big]}\det Z_x(x^{\prime\prime},t)\Big\rangle\mathrm{d}x^\prime\mathrm{d}\xi.
\end{align*}

\noindent So if we take $V_2\subset V_1$ and $W_3\subset W_2$ neighborhoods of the origin, such that 

\begin{equation*}
    \{Z(x,t)\;:\;x\in V_2,t\in W_3\}\subset\mathcal{O}_2,
\end{equation*}

\noindent we have that $\mathrm{I}_3^{\epsilon_j}(x,t)\longrightarrow \mathrm{G}_3(Z(x,t),t)$, for every $(x,t)\in V_2\times W_3$.\\

$\mathrm{I}_1^\epsilon(x,t)$: \\

\noindent  Let $(x,t)\in B_{r}(0)\times B_{\delta}(0)$ and $\alpha\in\mathbb{Z}_+^m$. Then 

\begin{equation*}
      \mathrm{M}^\alpha \mathrm{I}_1^\epsilon(x,t)=  \frac{1}{(2\pi^3)^\frac{m}{2}}\iint_{Q^1_t}\mathrm{M}^\alpha \left\{e^{i\zeta\cdot(Z(x,t)-z^\prime)-\langle\zeta\rangle\langle Z(x,t)-z^\prime\rangle^2}\right\}e^{-\epsilon\langle\zeta\rangle^2}\mathfrak{F}[\chi u](t;z^\prime,\zeta)\langle\zeta\rangle^\frac{m}{2}\mathrm{d}\zeta\mathrm{d}z^\prime.
\end{equation*}

\noindent Since the vectors fields $\{\mathrm{M}_1,\dots,\mathrm{M}_m\}$ are pairwise commuting and $\mathrm{M}_jZ_k(x,t)=\delta_{j,k}$, we can use formula \eqref{eq:derivative-gaussian} to calculate $\mathrm{M}^\alpha \left\{e^{i\zeta\cdot(Z(x,t)-z^\prime)-\langle\zeta\rangle\langle Z(x,t)-z^\prime\rangle^2}\right\}$, obtaining

\begin{align*}
    \mathrm{M}^\alpha \mathrm{I}_1^\epsilon(x,t)&= \frac{1}{(2\pi^3)^\frac{m}{2}}\sum_{\beta\leq\alpha}\binom{\alpha}{\beta}\iint_{Q^1_t}\mathrm{M}^{\alpha-\beta} e^{i\zeta\cdot(Z(x,t)-z^\prime)}\mathrm{M}^\beta e^{-\langle\zeta\rangle\langle Z(x,t)-z^\prime\rangle^2}\cdot\\
    &\cdot e^{-\epsilon\langle\zeta\rangle^2}\mathfrak{F}[\chi u](t;z^\prime,\zeta)\langle\zeta\rangle^\frac{m}{2}\mathrm{d}\zeta\mathrm{d}z^\prime\\
    &=\frac{1}{(2\pi^3)^\frac{m}{2}}\sum_{\beta\leq\alpha}\binom{\alpha}{\beta}\sum_{l^1_1+2l^1_2=\beta_1}\cdots\sum_{l^m_1+2\l^m_2=\beta_m}\frac{\beta!}{l^1_1!l^1_2!\cdots l^m_1!l^m_2!}\cdot\\
    &\cdot \iint_{Q^1_t} e^{i\zeta\cdot(Z(x,t)-z^\prime)-\langle\zeta\rangle\langle Z(x,t)-z^\prime\rangle^2-\epsilon\langle\zeta\rangle^2}\mathfrak{F}[\chi u](t;z^\prime,\zeta)\langle\zeta\rangle^\frac{m}{2}\cdot\\
    &\cdot(-\langle\zeta\rangle)^{l^1_1+l^1_2+\cdots+ l^m_1+l^m_2}(i\zeta)^{\alpha-\beta}(2(Z_1(x,t)-z^\prime_1))^{l^1_1}\cdots(2(Z_m(x,t)-z^\prime_m))^{l^m_1}\mathrm{d}\zeta\mathrm{d}z^\prime.\\
\end{align*}

\noindent Therefore by \eqref{eq:FBI-decay} there exists $\tilde{\epsilon}>0$ such that

\begin{align*}
   \left| \mathrm{M}^\alpha \mathrm{I}_1^\epsilon(x,t)\right|&\leq \frac{1}{(2\pi^3)^\frac{m}{2}}\sum_{\beta\leq\alpha}\binom{\alpha}{\beta}\sum_{l^1_1+2l^1_2=\beta_1}\cdots\sum_{l^m_1+2l^m_2=\beta_m}\frac{\beta!}{l^1_1!l^1_2!\cdots l^m_1!l^m_2!}\iint_{Q^1_t}e^{-(1-\kappa)|\zeta||Z(x,t)-z^\prime|^2}\cdot\\
   &\cdot |\zeta|^{|\alpha-\beta|+l^1_1+l^1_2+\cdots+ l^m_1+l^m_2+\frac{m}{2}}\left|\mathfrak{F}[\chi u](t;z^\prime,\zeta)\right||\mathrm{d}\zeta\mathrm{d}z^\prime|\\
   &\leq C_1^{|\alpha|+1}\sum_{\beta\leq\alpha}\binom{\alpha}{\beta}\sum_{l^1_1+2l^1_2=\beta_1}\cdots\sum_{l^m_1+2l^m_2=\beta_m}\frac{\beta!}{l^1_1!l^1_2!\cdots l^m_1!l^m_2!}\iint_{Q^1_t}e^{-\tilde{\epsilon}|\zeta|^{\frac{1}{s}}}\cdot\\
   &\cdot|\zeta|^{|\alpha-\beta|+l^1_1+l^1_2+\cdots+ l^m_1+l^m_2+\frac{m}{2}}|\mathrm{d}\zeta\mathrm{d}z^\prime|\\
   &\leq  C_2^{|\alpha|+1}\sum_{\beta\leq\alpha}\binom{\alpha}{\beta}\sum_{l^1_1+2l^1_2=\beta_1}\cdots\sum_{l^m_1+2l^m_2=\beta_m}\frac{\beta!}{l^1_1!l^1_2!\cdots l^m_1!l^m_2!}\int_{0}^\infty e^{-\tilde{\epsilon}\rho^{\frac{1}{s}}}\cdot\\
   &\cdot\rho^{|\alpha-\beta|+l^1_1+l^1_2+\cdots+ l^m_1+l^m_2+\frac{m}{2}+m-1}\mathrm{d}\rho\\
   &\leq  C_3^{|\alpha|+1}\sum_{\beta\leq\alpha}\binom{\alpha}{\beta}\sum_{l^1_1+2l^1_2=\beta_1}\cdots\sum_{l^m_1+2l^m_2=\beta_m}\frac{\beta!}{l^1_1!l^1_2!\cdots l^m_1!l^m_2!}\frac{\alpha!^s}{\beta!^s}(l^1_1+l^1_2)!^s\cdots(l^m_1+l^m_2)!^s\\
   &=  C_3^{|\alpha|+1}\sum_{\beta\leq\alpha}\binom{\alpha}{\beta}\sum_{l^1_1+2l^1_2=\beta_1}\cdots\sum_{l^m_1+2l^m_2=\beta_m}\frac{\beta!}{l^1_1!(2l^1_2)!\cdots l^m_1!(2l^m_2)!}\frac{\alpha!^s}{\beta!^s}(l^1_1+l^1_2)!^s\frac{(2l^1_2)!}{l^1_2!}\cdots\\
   &\cdots (l^m_1+l^m_2)!^s\frac{(2l^m_2)!}{l^m_2!}\\
   &\leq  C_4^{|\alpha|+1}\sum_{\beta\leq\alpha}\binom{\alpha}{\beta}\sum_{l^1_1+2l^1_2=\beta_1}\cdots\sum_{l^m_1+2\l^m_2=\beta_m}\frac{\beta!}{l^1_1!(2l^1_2)!\cdots l^m_1!(2l^m_2)!}\frac{\alpha!^s}{\beta!^s}(l^1_1+l^1_2)!^sl^1_2!\cdots\\
   &\cdots (l^m_1+l^m_2)!^sl^m_2!\\
   &\leq  C_5^{|\alpha|+1}\sum_{\beta\leq\alpha}\binom{\alpha}{\beta}\sum_{l^1_1+2l^1_2=\beta_1}\cdots\sum_{l^m_1+2l^m_2=\beta_m}\frac{\beta!}{l^1_1!(2l^1_2)!\cdots l^m_1!(2l^m_2)!}\frac{\alpha!^s}{\beta!^s}(l^1_1+2l^1_2)!^s\cdots\\
   &\cdots (l^m_1+2l^m_2)!^s\\
   &=  C_5^{|\alpha|+1}\alpha!^s\sum_{\beta\leq\alpha}\binom{\alpha}{\beta}\sum_{l^1_1+2l^1_2=\beta_1}\cdots\sum_{l^m_1+2l^m_2=\beta_m}\frac{\beta!}{l^1_1!(2l^1_2)!\cdots l^m_1!(2l^m_2)!}\\
   &\leq  C_5^{|\alpha|+1}\alpha!^s\sum_{\beta\leq\alpha}\binom{\alpha}{\beta}\sum_{l^1_1+l^1_2=\beta_1}\cdots\sum_{l^m_1+l^m_2=\beta_m}\frac{\beta!}{l^1_1!l^1_2!\cdots l^m_1!l^m_2!}\\
   &\leq  C_6^{|\alpha|+1}\alpha!^s,
   \end{align*}
   
   \noindent where the constant $C_6$ does not depend on $\epsilon$. The constant $C_6$ can be taken as $3C_5$, in view of Lemma $4.2.$ of \cite{bierstone}.

\end{proof}

\begin{rmk}\label{rmk:FBI-characterization-W}
Note that for the implications $\textit{1}.\Rightarrow\textit{2}.\Rightarrow\textit{3}.$ we can take $\widetilde{W}=W_0$. Also by a closer inspection on the proof of $\textit{1}.\Rightarrow\textit{2}.$ we can take $V_1$ as $V$, so that if $\chi\in\mathcal{C}^\infty_c(V)$ such that $\chi\equiv 1$ on $V_2$ an open ball centered at the origin, then the inequality \eqref{eq:FBI-decay} is valid for every open ball $\widetilde{V}\Subset V_2$, centered at the origin.
\end{rmk}

\section{Propagation of singularities}

In 1983 N. Hanges and F. Treves (\cite{hanges92}) proved that hypo-analytic regularity propagates along elliptic submanifolds, and in their proof they actually showed that the decay of the FBI transform  propagates. But since then all the propagation of singularities results, concerning systems of complex vector fields, were obtained in the setting of CR geometry, for instance holomorphic extendabillity of CR functions, propagation along CR orbits, sector extendability, (see \cite{tumanov1997propagation}, \cite{trepreau1990propagation} and \cite{baracco2005propagation}) and so on. We did not find in the literature any other result concerning propagation of Gevrey singularities in this set up.

We shall consider only analytic tube structures, \textit{i.e}., locally the hypo-analytic structure is given by $Z(x,t)=x+i\phi(t)$, defined on $U=V\times W$, and $\phi(t)$ is analytic. One of the reasons we are only dealing with tube structures is that the real structure bundle, $\mathbb{R}\mathrm{T}^\prime_{\mathfrak{W}_t}$, is trivial for every $t$, \textit{i.e.}, it is equal to $Z(U)\times\mathbb{R}^m$. Now we will recall a simple comparison result for the FBI transform for solutions (see proposition IX.$5.3$., pg $436$ of \cite{trevesbook}):

\begin{prop}\label{prop:propagation-FBI-treves-easy}
There are open balls $V_0\Subset V_1\Subset V$ in $\mathbb{R}^m$ and $W_0\Subset W$ in $\mathbb{R}^n$, all centered at the origin, and constants $r,\kappa,R>0$ such that, if $\chi\in\mathcal{C}_c^\infty(V_1)$ is equal to $1$ in $V_0$, then, to every solution $u$ in $U=V\times W$, there is a constant $C>0$ such that

\begin{equation*}
    |\mathfrak{F}[\chi u](t;z,\zeta)-\mathfrak{F}[\chi u](t^\prime;z,\zeta)|\leq Ce^{-|\zeta|/R},
\end{equation*}

\noindent in the region

\begin{equation*}
    t, t^\prime\in W_0, z\in\mathbb{C}^m, |z|<r, \zeta\in\mathfrak{C}_\kappa.
\end{equation*}
\end{prop}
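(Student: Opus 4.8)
The plan is to reduce the statement to an exponential bound on the $t$-derivative of the FBI transform and then to integrate it along a segment in $W_0$. Since the structure is of tube type one has $Z(x,t)=x+i\phi(t)$, $\mathrm{M}_k=\partial_{x_k}$, and $\mathrm{L}_j=\partial_{t_j}-i\sum_{k}b_{jk}(t)\partial_{x_k}$ with $b_{jk}(t)=\partial_{t_j}\phi_k(t)$, and $\mathbb{R}\mathrm{T}^\prime_{\mathfrak{W}_t}=Z(U)\times\mathbb{R}^m$. I would write the FBI kernel as $K(t;z,x,\zeta)=G\big(z-Z(x,t),\zeta\big)$, where $G(w,\zeta)=e^{i\zeta\cdot w-\langle\zeta\rangle\langle w\rangle^2}\Delta(w,\zeta)$, so that $\mathfrak{F}[\chi u](t;z,\zeta)=\langle u(\cdot,t),\chi(\cdot)K(t;z,\cdot,\zeta)\rangle$, the pairing being in $x$. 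Because $\mathrm{L}_jZ_k=0$ and $\langle\zeta\rangle$ is a parameter (constant in $(x,t)$), the kernel is annihilated by the fields: $\mathrm{L}_j^{(x,t)}K=\big(\partial_{t_j}-i\sum_k b_{jk}\partial_{x_k}\big)K=-\sum_l(\partial_{w_l}G)\,\mathrm{L}_jZ_l=0$. This cancellation is what makes the argument work.

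Next I would differentiate the pairing in $t_j$, use that $u$ is a solution, i.e. $\mathrm{L}_ju=0$, to write $\partial_{t_j}u=i\sum_k b_{jk}(t)\partial_{x_k}u$, and integrate by parts in $x$:
\begin{align*}
\partial_{t_j}\mathfrak{F}[\chi u](t;z,\zeta)
&=\langle\partial_{t_j}u,\chi K\rangle+\langle u,\chi\,\partial_{t_j}K\rangle\\
&=-i\sum_k b_{jk}(t)\,\langle u,(\partial_{x_k}\chi)K\rangle+\big\langle u,\chi\,\mathrm{L}_j^{(x,t)}K\big\rangle\\
&=-i\sum_k b_{jk}(t)\,\langle u(\cdot,t),(\partial_{x_k}\chi)\,K(t;z,\cdot,\zeta)\rangle ,
\end{align*}
the middle term vanishing by the cancellation above. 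Now I would choose $V_0\Subset V_1\Subset V$ first, so $\delta_0:=\mathrm{dist}(V_0,V\setminus V_1)>0$ and $\mathrm{supp}\,\partial_{x_k}\chi\subset V_1\setminus V_0$, and then shrink the ball $W_0$ and the radius $r$, and take $\kappa>0$ small, so that for $|z|<r$, $t\in W_0$, $x\in V_1\setminus V_0$ and $\zeta\in\mathfrak{C}_\kappa$ one has both $|z-Z(x,t)|\geq\delta_0/2$ and $\Re\big\{i\zeta\cdot(z-Z(x,t))-\langle\zeta\rangle\langle z-Z(x,t)\rangle^2\big\}\leq-c|\zeta|$ for a fixed $c>0$; this is the well-positioned estimate \eqref{eq:well-positioned} together with \eqref{eq:estimate-re-langle-zeta-rangle}, now read on the full cone $\mathfrak{C}_\kappa$ rather than only on $\mathbb{R}\mathrm{T}^\prime_{\mathfrak{W}_t}$, which is legitimate once $\kappa$, $r$ and $\sup_{W_0}|\phi|$ are small compared with $\delta_0$. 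Since $\Delta(\cdot,\zeta)$ is polynomial in its first slot and $|\zeta|/|\langle\zeta\rangle|$ is bounded on $\mathfrak{C}_\kappa$, each $\partial_x^\alpha K$ is bounded there by $C_\alpha(1+|\zeta|)^{|\alpha|+m}e^{-c|\zeta|}$. Applying the uniform continuity estimate \eqref{q:uniform-cont-distribution-solution} to $u(\cdot,t)$ tested against $(\partial_{x_k}\chi)K(t;z,\cdot,\zeta)\in\mathcal{C}^\infty_c(V_1\setminus V_0)$, and using that $b_{jk}$ is bounded on $W_0$, gives $|\partial_{t_j}\mathfrak{F}[\chi u](t;z,\zeta)|\leq C(1+|\zeta|)^{q+m}e^{-c|\zeta|}\leq C'e^{-|\zeta|/R}$ for a suitable $R$, uniformly in $t\in W_0$, $|z|<r$, $\zeta\in\mathfrak{C}_\kappa$. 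Finally, for $t,t'\in W_0$ I would integrate along the segment $s\mapsto t'+s(t-t')\in W_0$:
\begin{equation*}
\mathfrak{F}[\chi u](t;z,\zeta)-\mathfrak{F}[\chi u](t';z,\zeta)=\int_0^1\sum_{j}(t_j-t'_j)\,\partial_{t_j}\mathfrak{F}[\chi u]\big(t'+s(t-t');z,\zeta\big)\,\mathrm{d}s,
\end{equation*}
which is bounded by $C''e^{-|\zeta|/R}$ since $|t-t'|\leq\mathrm{diam}\,W_0$.

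The step I expect to be the main obstacle is the geometric/analytic one: obtaining the exponential smallness of $K$ and of its $x$-derivatives on $\mathrm{supp}\,\nabla\chi$ uniformly for $\zeta$ in a full cone $\mathfrak{C}_\kappa$, because \eqref{eq:well-positioned} is phrased only for $\zeta$ in the real structure bundle. One must fix $V_0\Subset V_1$ before choosing $\kappa$, $r$ and $W_0$, and control $\Re\{i\zeta\cdot(z-Z)-\langle\zeta\rangle\langle z-Z\rangle^2\}$ when both $\Im(z-Z)$ and $\Im\zeta$ are nonzero but small, so that the gain $-\Re\langle\zeta\rangle\,|\Re(z-Z)|^2\lesssim-c|\zeta|\delta_0^2$ beats the $O\big((\kappa+r+\sup_{W_0}|\phi|)|\zeta|\big)$ error. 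By contrast, the cancellation $\mathrm{L}_jK=0$ and the integration by parts are purely formal.
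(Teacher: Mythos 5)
Your argument is correct, and it is the standard proof of this comparison result: the paper simply cites Treves (Proposition IX.5.3, where the same cancellation argument is used) and does not re-prove it. The essential points --- the identity $\mathrm{L}_j^{(x,t)}K=0$, the passage from $\partial_{t_j}u$ to $\partial_{x_k}u$ via the homogeneous equation followed by integration by parts in $x$, the resulting formula $\partial_{t_j}\mathfrak{F}[\chi u]=-i\sum_k b_{jk}(t)\big\langle u(\cdot,t),(\partial_{x_k}\chi)\,K\big\rangle$ supported in $V_1\setminus V_0$, and the exponential decay of $K$ and its $x$-derivatives there for $\zeta\in\mathfrak{C}_\kappa$ and $|z|<r$, obtained by fixing $V_0\Subset V_1$ first and only then shrinking $\kappa$, $r$ and $W_0$ so that the loss from $|\Im z|$, $|\phi|$ and $|\Im\zeta|$ is dominated by the gain $-\Re\langle\zeta\rangle\,|\Re(z-Z(x,t))|^2$ --- are all correct, and the final bound via the $t$-integration and the distribution estimate \eqref{q:uniform-cont-distribution-solution} closes the argument.
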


\noindent This proposition can be used to show that hypo-analyticity propagates along connected fibers (recall that a fiber is locally a level set of the map $Z(x,t)$). Let us just indicate how it is done. Suppose that $u$ is hypo-analytic at the origin and let $t_0\in W_0$ be such that $Z(0,t_0)=0$. To show that $u$ is hypo-analytic at $(0,t_0)$ it is enough to show that $u|_{\mathcal{H}_{t_0}}$ is hypo-analytic at $(0,t_0)$, where $\mathcal{H}_{t_0}=\{(x,t_0)\,:\,x\in V_0\}$, but this is equivalent to 

\begin{equation*}
    |\mathfrak{F}[\chi u](t_0;z,\zeta)|\leq Ce^{-\epsilon|\zeta|},
\end{equation*}

\noindent for some $C,\epsilon>0$, and $z$ in some open neighborhood of the origin and $\zeta\in\mathfrak{C}_\kappa$, for some $0<\kappa<1$. But since $u$ is hypo-analytic at the origin, we have that 

\begin{equation*}
     |\mathfrak{F}[\chi u](0;z,\zeta)|\leq Ce^{-\epsilon|\zeta|},
\end{equation*}

\noindent for some $C,\epsilon>0$, and $z$ in some open neighborhood of the origin and $\zeta\in\mathfrak{C}_\kappa$, for some $0<\kappa<1$, therefore we have the desired decay at $t_0$ in view of Proposition \ref{prop:propagation-FBI-treves-easy}. One can follow the end of the proof of the Theorem \ref{thm:propagation-Gevrey} to globalize this argument to connected fibers. So why we can not use this same argument for Gevrey vectors? First, we do not have the property that ensures the desired regularity by only looking to restrictions on maximally real submanifolds. And the second reason is that for Gevrey regularity, to use a FBI transform argument, $(z,\zeta)$ must belong to the real structure bundle $\mathbb{R}\mathrm{T}^\prime_{\mathfrak{W}_t}$, that depends on $t$. So to avoid this dependence we are restringing ourselves to tube structures. To deal with "restringing to maximally real submanifolds is not enough" problem we need some sort of foliation near the "propagators", and for that it is important for the structure to be analytic.

\section{Propagation of Gevrey regularity for solutions of the non homogeneous system}

In this section we will define the sets that will propagate the Gevrey regularity, the "propagators", and then exhibit the proof of the second main theorem of this work. Let $\Sigma\subset\Omega$ be a connected subset of $\Omega$, satisfying the following properties:

\begin{enumerate}
    \item For every $p\in\Sigma$ there is $(U,Z)$, a hypo-analytic chart, with $p\in U$, such that $\Sigma\cap U\subset Z^{-1}(0)$;\\
    \item In the same situation as above, for every $q\in\Sigma\cap U$, and $\widetilde{U}_1\Subset U$, an open neighborhood of $p$, there is $\widetilde{U}_2\Subset U$, an open neighborhood of $q$, such that the connected component of the fiber $Z^{-1}(Z(q^\prime))$ that contains $q^\prime$ intersects $\widetilde{U}_1$, for every $q^\prime\in \widetilde{U}_2$;\\
    \item the map $\Sigma\ni p\mapsto \sup\{r>0\;:\;B_r(p)\subset U\}$ is continuous.
\end{enumerate}

 \noindent Condition $(3)$ is not exactly a condition, because we can always shrink the open set $U$ for each $p$, therefore we can choose $U$ to be a ball with radius varying continuously on $p$. Condition $(2)$ implies that for every $q^\prime\in \widetilde{U}_2$ there is a curve $\gamma_{q^\prime}:[0,1]\longrightarrow U$ satisfying
    \begin{itemize}
        \item $\gamma_{q^\prime}(0)=q^\prime$;\\
        \item $Z(\gamma_{q^\prime}(\sigma))=Z(q^\prime)$, for every $0\leq \sigma\leq 1$;\\
        \item $\gamma_{q^\prime}(1)\in \widetilde{U}_1$;
    \end{itemize}
    
\noindent Since the structure is analytic, the level sets of $Z(x,t)$ are subanalytic sets, therefore the curves $\{\gamma_{q^\prime}\}_{q^\prime\in\widetilde{U}_2}$ have bounded length, see for instance section $8$ of \cite{hardt1982some} or pg. $39$ of \cite{treves1983local} (in the appendix wrote by B. Teissier). Let $p\in\Sigma$, and let $(U,Z)$ be the hypo-analytic chart described above. Take local coordinates in $(U$, $x_1, \dots, x_m, t_1, \dots, t_n)$, such that in this coordinates $p=0$, $U=V\times W$, and the real structure bundle on $V$, $\left.\mathbb{R}\mathrm{T}^\prime_{\mathfrak{W}_t}\right|_V$, is well positioned for every $t\in W$, \textit{i.e.}, there exists $c_0>0$ such that

\begin{equation*}
    \Im\{\xi\cdot(Z(x,t)-Z(y,t))+i|\xi|\langle Z(x,t)-Z(y,t)\rangle^2\}\geq c_0|\xi||Z(x,t)-Z(y,t)|^2,
\end{equation*}

\noindent for every $x,y\in V$, $t\in W$, and $\xi\in\mathbb{R}^m$. 

\begin{lem}
Let $\rho>0$ be such that $B_{\rho}(0)\Subset V$, and let $f\in\mathcal{C}^\infty(W;\mathcal{E}^\prime(K\setminus B_\rho(0)))$, where $B_{\rho}(0)\subset K\Subset V$, is a compact set. Then

\begin{equation*}
    |\mathfrak{F}[f](t;Z(x,t),\xi)|\leq C e^{-\epsilon|\xi|},\quad \forall x\in B_{\rho/2}(0),\, t\in W,\,\xi\in\mathbb{R}^m.
\end{equation*}
\end{lem}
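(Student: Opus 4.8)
The statement is essentially the "far support gives exponential decay" lemma, analogous to the estimate for the integral $(1)$ in the proof of $\textit{1}.\Rightarrow\textit{2}.$ of Theorem \ref{thm:gevrey-FBI-hypo}. The plan is to estimate the modulus of the integrand of $\mathfrak{F}[f](t;Z(x,t),\xi)$ directly, using the well-positioned hypothesis together with the fact that the base point $Z(x,t)$ (with $x\in B_{\rho/2}(0)$) and the support point $Z(x',t)$ (with $x'\in K\setminus B_\rho(0)$) are separated. Write
\begin{equation*}
    \mathfrak{F}[f](t;Z(x,t),\xi)=\big\langle f(x',t),\,e^{i\xi\cdot(Z(x,t)-Z(x',t))-|\xi|\langle Z(x,t)-Z(x',t)\rangle^2}\Delta(Z(x,t)-Z(x',t),\xi)\big\rangle,
\end{equation*}
noting that in a tube-type structure we may take $\zeta=\xi\in\mathbb{R}^m$ since $\mathbb{R}\mathrm{T}^\prime_{\mathfrak{W}_t}=Z(U)\times\mathbb{R}^m$.

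First I would record the geometric separation: for $x\in B_{\rho/2}(0)$ and $x'\in K\setminus B_\rho(0)$ one has $|x-x'|\geq \rho/2$, and hence by \eqref{eq:bound-derivative-phi} (which we may assume with $\mu<1$) that $|Z(x,t)-Z(x',t)|\geq |x-x'|\geq\rho/2$. Then the well-positioned inequality gives
\begin{equation*}
    \Im\{\xi\cdot(Z(x,t)-Z(x',t))+i|\xi|\langle Z(x,t)-Z(x',t)\rangle^2\}\geq c_0|\xi|\,|Z(x,t)-Z(x',t)|^2\geq \tfrac{c_0\rho^2}{4}|\xi|,
\end{equation*}
so the modulus of the Gaussian exponential factor is bounded by $e^{-c_0\rho^2|\xi|/4}$, uniformly in $x\in B_{\rho/2}(0)$, $x'\in K\setminus B_\rho(0)$, $t\in W$.

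Next, since $f\in\mathcal{C}^\infty(W;\mathcal{E}^\prime(K\setminus B_\rho(0)))$, the uniform boundedness estimate of type \eqref{q:uniform-cont-distribution-solution} gives an integer $q$ and a constant $C$ so that pairing $f(\cdot,t)$ against a test function costs at most $C$ times the sum of sup-norms of derivatives up to order $q$. Applying $\partial_{x'}^\alpha$, $|\alpha|\le q$, to the kernel $e^{i\xi\cdot(Z(x,t)-Z(x',t))-|\xi|\langle Z(x,t)-Z(x',t)\rangle^2}\Delta(Z(x,t)-Z(x',t),\xi)$ produces, by the Leibniz rule and \eqref{eq:derivative-gaussian}, at worst a polynomial factor in $|\xi|$ of degree $\le q+\tfrac{m}{2}$ times the same Gaussian exponential (note $\Delta$ is polynomial of degree $\le m$ in its first slot divided by powers of $\langle\xi\rangle$, and $|\langle\xi\rangle|\sim|\xi|$ on $\mathbb{R}^m$). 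Hence there are $C_1>0$ and $k\in\mathbb{Z}_+$ with
\begin{equation*}
    |\mathfrak{F}[f](t;Z(x,t),\xi)|\leq C_1(1+|\xi|)^{k}e^{-c_0\rho^2|\xi|/4},\quad x\in B_{\rho/2}(0),\ t\in W,\ \xi\in\mathbb{R}^m,
\end{equation*}
and absorbing the polynomial into the exponential (replacing $c_0\rho^2/4$ by any $\epsilon<c_0\rho^2/4$) yields the claimed bound. The only mild subtlety — the "main obstacle", such as it is — is bookkeeping: making sure the polynomial loss from differentiating $\Delta$ and the Gaussian is genuinely uniform in $x$, $x'$, $t$, which follows because all these quantities range over compact sets and the separation bound $|Z(x,t)-Z(x',t)|\geq\rho/2$ is uniform; no Gevrey input about $f$ is needed here since $f$ need only be smooth in $t$ with compactly supported distributional dependence on $x'$.
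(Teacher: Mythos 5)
Your proof is correct and follows essentially the same route as the paper's: exploit the well-positioned inequality and the geometric separation $|Z(x,t)-Z(x',t)|\geq|x-x'|\geq\rho/2$ to get the exponential factor $e^{-c_0\rho^2|\xi|/4}$, then pay a polynomial price in $|\xi|$ from the distributional pairing via the estimate of type \eqref{q:uniform-cont-distribution-solution}, and absorb it by halving the exponent. The only cosmetic difference is that you flag the tube-structure simplification $\zeta=\xi$ explicitly, while the paper does it implicitly through its $c_0$ convention set up just before the lemma.
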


\begin{proof}
Let $x\in B_{\rho/2}(0)$ and $y\in K\setminus B_{\rho}(0)$, then

\begin{align*}
    \Im\{\xi\cdot(Z(x,t)-Z(y,t))+i|\xi|\langle Z(x,t)-Z(y,t)\rangle^2\}&\geq c_0|\xi||Z(x,t)-Z(y,t)|^2\\
    &\geq c_0|\xi||x-y|^2\\
    &\geq c_0|\xi|\frac{\rho^2}{4},
\end{align*}

\noindent for every $t\in W$ and $\xi\in\mathbb{R}^m$. Therefore

\begin{align*}
    |\mathfrak{F}[f](t;Z(x,t),\xi)|&=\Big|\Big\langle f(y,t), e^{i\xi\cdot(Z(x,t)-Z(y,t))-|\xi|\langle Z(x,t)-Z(y,t)\rangle^2}\Delta(Z(x,t)-Z(y,t),\xi)\cdot\\
    &\cdot\det Z_x(y,t)\Big\rangle\Big|\\
    &\leq C\sum_{|\alpha|\leq\lambda}\sup_{y\in K\setminus B_{\rho/2}(0)}\Big|\partial_y^\alpha\Big\{e^{i\xi\cdot(Z(x,t)-Z(y,t))-|\xi|\langle Z(x,t)-Z(y,t)\rangle^2}\cdot\\
    &\cdot \Delta(Z(x,t)-Z(y,t),\xi)\det Z_x(y,t)\Big\}\Big|\\
    &\leq C|\xi|^\lambda e^{-c_0|\xi|\frac{\rho^2}{4}}\\
    &\leq C e^{-\frac{c_0\rho^2}{8}|\xi|},
\end{align*}

\noindent for every $x\in B_{\rho/2}(0)$, $t\in W$, and $\xi\in\mathbb{R}^m$.

\end{proof}

\begin{thm}\label{thm:propagation-Gevrey}
Let $\Omega\subset\mathbb{R}^{n+m}$ be an open set endowed with an analytic hypo-analytic structure of tube type. Let $\Sigma\subset\Omega$ be a connected submanifold as described above. If $u\in\mathcal{D}^\prime(\Omega)$ is such that $\mathbb{L}u\in \mathrm{G}^s(\Omega)$, then $\textrm{singsupp}_s\,u\cap \Sigma=\emptyset$ or $\Sigma\subset\textrm{singsupp}_s\,u$. 
\end{thm}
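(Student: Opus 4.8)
The plan is to combine the FBI characterization of Gevrey vectors, Theorem~\ref{thm:gevrey-FBI-hypo}, with the comparison estimate of Proposition~\ref{prop:propagation-FBI-treves-easy}, exploiting that in a tube structure the real structure bundle $\mathbb{R}\mathrm{T}^\prime_{\mathfrak{W}_t}$ is independent of $t$, so the region in which Gevrey decay of the FBI transform is tested is the same for all fibers. I will set $B=\Sigma\setminus\textrm{singsupp}_s\,u$, i.e.\ the set of $p\in\Sigma$ near which $u$ is a Gevrey-$s$ vector (by $\mathbb{L}u\in\mathrm{G}^s(\Omega)$ and the commutativity of the $\mathrm{L}_j,\mathrm{M}_k$ this only involves estimates on $\mathrm{M}^\beta u$). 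The set $B$ is open in $\Sigma$; the goal is to show it is also closed, since then connectedness of $\Sigma$ forces $B=\emptyset$ or $B=\Sigma$, which is the dichotomy asserted. By condition $(3)$ there is a continuous $r\colon\Sigma\to(0,\infty)$ with $B_{r(p)}(p)\subset U_p$, so it suffices to prove the \emph{local statement}: if $q_0\in B$ then $\Sigma\cap U_{q_0}\subset B$, where $U_{q_0}$ is the chart centered at $q_0$. Indeed, granting this, if $q_1$ lies in the closure of $B$ in $\Sigma$ I pick $q_0\in B$ so close to $q_1$ that $|q_0-q_1|<\tfrac13 r(q_1)$ and $|r(q_0)-r(q_1)|<\tfrac13 r(q_1)$; then $q_1\in B_{r(q_0)}(q_0)\subset U_{q_0}$, hence $q_1\in B$.

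\textbf{Setup for the local statement.} Fix $q_0\in B$ and work in $U=U_{q_0}=V\times W$ centered at $q_0$, with $Z(x,t)=x+i\phi(t)$, $\phi$ analytic and $\phi(0)=0$; after a biholomorphism I may assume the real structure bundle over $V$ is well positioned for every $t\in W$ and that Proposition~\ref{prop:propagation-FBI-treves-easy} holds in $U$, with balls $V_0\Subset V_1\Subset V$, $W_0\Subset W$, a fixed cutoff $\chi\equiv1$ on $V_0$ supported in $V_1$, and constants $\kappa,r,R>0$. Since $\Sigma\cap U\subset Z^{-1}(0)=\{(x,t):x=0,\ \phi(t)=0\}$, every point of $\Sigma\cap U$ has $x$-coordinate $0$, so the \emph{same} cutoff $\chi$ serves for the FBI characterization at each of them. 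Shrinking, let $\widetilde U_1\ni q_0$ be a neighborhood on which $u\in\mathrm{G}^s$; by Theorem~\ref{thm:gevrey-FBI-hypo} there are $C,\epsilon>0$ with $|\mathfrak{F}[\chi u](\tau;Z(x,\tau),\xi)|\le Ce^{-\epsilon|\xi|^{1/s}}$ for $(x,\tau)\in\widetilde U_1$, $\xi\in\mathbb{R}^m$.

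\textbf{Propagation along fibers.} Let $q_1\in\Sigma\cap U$ and apply condition $(2)$ with this $q_1$ and the above $\widetilde U_1$ to obtain $\widetilde U_2\ni q_1$ such that each $q^\prime=(x^\prime,t^\prime)\in\widetilde U_2$ is joined to a point $(x^\prime,\tau_0)\in\widetilde U_1$ by a curve $\gamma_{q^\prime}$ inside the fiber $\{(x^\prime,\tau):\phi(\tau)=\phi(t^\prime)\}$, and these curves have uniformly bounded length by subanalyticity of the level sets. The key point is that along $\gamma_{q^\prime}$ the value $Z(x^\prime,\tau)=x^\prime+i\phi(t^\prime)=:z^\prime$ is constant, with $|z^\prime|<r$ for $\widetilde U_1,\widetilde U_2$ small. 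I cover $\gamma_{q^\prime}$ by finitely many balls in which Proposition~\ref{prop:propagation-FBI-treves-easy} applies, the number $N$ being bounded independently of $q^\prime$ thanks to the uniform length bound and the continuity of $r$. Because $u$ solves only the non-homogeneous system, the comparison acquires a remainder: writing $\mathbb{L}u=f\in\mathrm{G}^s(\Omega)$ and splitting $f$ into a piece supported near the $x$-origin (whose FBI transform decays like $e^{-\epsilon|\xi|^{1/s}}$ by Theorem~\ref{thm:gevrey-FBI-hypo} applied to $f$) and a piece supported away from it (whose FBI transform decays exponentially by the preceding Lemma), one obtains on each ball of the cover
\[
|\mathfrak{F}[\chi u](\tau;z^\prime,\xi)-\mathfrak{F}[\chi u](\tau^\prime;z^\prime,\xi)|\le Ce^{-\epsilon|\xi|^{1/s}}.
\]
Chaining the $N$ comparisons along $\gamma_{q^\prime}$, starting from the Gevrey bound at $\tau_0$ (valid since $(x^\prime,\tau_0)\in\widetilde U_1$ and $Z(x^\prime,\tau_0)=z^\prime$), gives $|\mathfrak{F}[\chi u](t^\prime;Z(x^\prime,t^\prime),\xi)|\le C^\prime e^{-\epsilon^\prime|\xi|^{1/s}}$ with $C^\prime,\epsilon^\prime$ independent of $q^\prime$ ($N$ being bounded and $e^{-|\xi|/R}\le e^{-\epsilon^\prime|\xi|^{1/s}}$ for large $|\xi|$, as $1/s<1$). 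As $q^\prime$ ranges over $\widetilde U_2$, this is precisely condition $(2)$ of Theorem~\ref{thm:gevrey-FBI-hypo} in the chart recentered at $q_1$, so $u\in\mathrm{G}^s$ near $q_1$, i.e.\ $q_1\in B$. This proves the local statement and hence the theorem.

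\textbf{Main obstacle.} The delicate part is the joint control of two things along the fiber. First, the geometric fact that $Z$ is constant on connected fibers, which makes Proposition~\ref{prop:propagation-FBI-treves-easy} — a comparison at a \emph{common} point $z$ — exactly the right tool; this is where the tube hypothesis is essential, since it removes the $t$-dependence of $\mathbb{R}\mathrm{T}^\prime_{\mathfrak{W}_t}$, and it must be combined with condition $(2)$, which guarantees that the fiber through a point near $q_1$ actually re-enters the region of good FBI decay near $q_0$. Second, the uniformity of the number of chaining steps, which rests on subanalyticity of the level sets (uniformly bounded fiber length) and on condition $(3)$ (continuously varying chart radii); without it the Gevrey constants would degrade uncontrollably. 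The non-homogeneous term is comparatively routine, being absorbed through Theorem~\ref{thm:gevrey-FBI-hypo} applied to $f$ together with the preceding Lemma.
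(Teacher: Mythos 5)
Your proposal has the same logical skeleton as the paper's proof: the open/closed dichotomy for the good set $B=\Sigma\setminus\textrm{singsupp}_s u$ on a connected $\Sigma$, reduction to the local statement ``$q_0\in B\Rightarrow\Sigma\cap U_{q_0}\subset B$'', the observation that the tube hypothesis freezes $\mathbb{R}\mathrm{T}^\prime_{\mathfrak{W}_t}$, and the transport of the FBI decay along a fiber curve of uniformly bounded length. Where you diverge is in \emph{how} the transport is carried out, and there is one genuine imprecision in the way you account for the inhomogeneity.

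On the transport mechanism: the paper does not chain Proposition~\ref{prop:propagation-FBI-treves-easy} on a cover of the fiber. Instead it works with the single chart $U$ and uses the identity
$\partial_{t_j}\mathfrak{F}[\chi u](t;z,\xi)=\mathfrak{F}[\mathrm{L}_j(\chi u)](t;z,\xi)$ (valid because the FBI kernel at frozen $z$ is annihilated by $\mathrm{L}_j$), which turns the difference
$\mathfrak{F}[\chi u](t^\prime;z^\prime,\xi)-\mathfrak{F}[\chi u](\gamma^{(2)}(1);z^\prime,\xi)$
into one integral over $[0,1]$ of $\sum_j\mathfrak{F}[\mathrm{L}_j(\chi u)](\gamma^{(2)}(\sigma);z^\prime,\xi)\,\tfrac{d}{d\sigma}\gamma^{(2)}_j(\sigma)$. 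The uniform length bound on the $\gamma_{q^\prime}$'s is used exactly once, to bound that integral. Your cover-and-chain argument is morally the same thing, but it rests on an unproved non-homogeneous analogue of Proposition~\ref{prop:propagation-FBI-treves-easy} on each small ball; if you actually try to prove that analogue you will reproduce the Stokes computation above, so the chaining is an avoidable detour (and introduces the bookkeeping about recentering the proposition at points off the origin, which you gloss over).

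On the inhomogeneity: the object that must be split is $\mathrm{L}_j(\chi u)=(\mathrm{L}_j\chi)\,u+\chi\,f_j$, not $f$ itself. The exponentially decaying contribution in the Stokes remainder is the commutator term $(\mathrm{L}_j\chi)\,u$, which is supported in the annulus where $\chi$ is non-constant; it is estimated by the lemma preceding Theorem~\ref{thm:propagation-Gevrey} and it has nothing to do with $f$. The Gevrey-decaying contribution is $\chi f_j$, controlled by Theorem~\ref{thm:gevrey-FBI-hypo} (and Remark~\ref{rmk:FBI-characterization-W}, which is what gives the needed $t$-uniformity along the whole fiber, not just near $t=0$). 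Your phrase ``splitting $f$ into a piece supported near the $x$-origin and a piece supported away from it'' replaces the commutator with a fictitious piece of $f$; as written, that decomposition does not produce the term you need. Once you replace it with the decomposition of $\mathrm{L}_j(\chi u)$ the rest of your estimates are the right ones.

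One small but necessary point you handle only implicitly: the Gevrey decay of $\mathfrak{F}[\chi f_j]$ must hold for \emph{all} $t$ traversed by $\gamma_{q^\prime}$, not merely near the origin. The paper gets this from Remark~\ref{rmk:FBI-characterization-W} (one may take $\widetilde W$ equal to the full $t$-domain on which $f_j$ is Gevrey); you should cite that explicitly rather than relying on Theorem~\ref{thm:gevrey-FBI-hypo} alone.
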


\begin{proof}
Let $p\in\Sigma$, and suppose that $p\notin\textrm{singsupp}_s\,u$. Let $(U,Z)$ be the hypo-analytic chart described before. Consider in $U$ the local coordinates $(x_1, \dots, x_m, t_1, \dots, t_n)$, and the complex vector fields $\{\mathrm{M}_1, \dots, \mathrm{M}_m, \mathrm{L}_1, \dots, \mathrm{L}_n\}$, as in the previous chapter. In this coordinates system $p=0$, and we write $U=V\times W$, where $V\subset \mathbb{R}^n$ and $W\subset\mathbb{R}^m$ are both open neighborhoods of the origin. We also have that

\begin{equation*}
    Z_k(x,t)=x_k+i\phi_k(t),\quad k=1,\dots, m,
\end{equation*}

\noindent and 

\begin{center}
\begin{tabular}{ll}
    $\mathrm{L}_j Z_k=0$  & $\mathrm{M}_l Z_k = \delta_{l,k}$\\
   $\mathrm{L}_j t_i = \delta_{j,i}$  & $\mathrm{M}_l t_i=0$.
\end{tabular}
\end{center}

\noindent We can also assume that the real structure bundle $\mathbb{R}\mathrm{T}^\prime_{\mathfrak{W}_t}$ is well positioned, for every $t\in W$. Now let $\rho>0$ be such that $B_\rho(0)\subset V$, and $\chi\in\mathcal{C}^\infty_c(V)$ be such that $\chi\equiv 1$ on $B_\rho(0)$.
Since $\mathbb{L}u\in \mathrm{G}^s(\Omega)$, we have that $\mathrm{L}_j u\in\mathrm{G}^s(U; \mathrm{L}_1, \dots, \mathrm{L}_n, \mathrm{M}_1, \dots, \mathrm{M}_m)$, then $u\in\mathcal{C}^\infty(W; \mathcal{D}^\prime(V))$. By Theorem \ref{thm:gevrey-FBI-hypo} and Remark \ref{rmk:FBI-characterization-W} we have that

\begin{equation}\label{eq:FBI-chiLu}
    \big|\mathfrak{F}[\chi\mathrm{L}_ju](t;Z(x,t),\xi)\big|\leq Ce^{-\epsilon_1|\xi|^{\frac{1}{s}}},\quad\forall x\in B_{\rho^\prime}(0),\forall t\in W,\forall\xi\in\mathbb{R}^m, j=1,\dots,n,
\end{equation}

\noindent for some $C,\epsilon_1>0$, where $\rho/2\leq\rho^\prime<\rho$. We are assuming that $u|_{U_0}\in \mathrm{G}^s(U_0; \mathrm{L}_1, \dots, \mathrm{L}_n, \mathrm{M}_1, \dots, \mathrm{M}_m)$, for some open neighborhood of the origin, $U_0=V_0\times W_0$. Then by Theorem \ref{thm:gevrey-FBI-hypo} and Remark \ref{rmk:FBI-characterization-W} there exist $V_1\Subset V$, an open neighborhood of the origin, and positive constants $C,\epsilon_2$, such that

\begin{equation}\label{eq:FBI-u-decay-propagation-hypothesis}
    |\mathfrak{F}[\chi u](t;Z(x,t),\xi)|\leq Ce^{-\epsilon_2|\xi|^\frac{1}{s}},\quad\forall (x,t)\in V_1\times W_0,\,\forall \xi\in\mathbb{R}^m.
\end{equation}

\noindent By condition $(2)$, for every $(x_0,t_0)\in\Sigma\cap\left( B_{\rho/2}(0)\times W\right)$ there exists $\widetilde{V}\times \widetilde{W}\subset B_{\rho/2}\times W$, an open neighborhood of $(x_0,t_0)$, such that, for every $(x^\prime,t^\prime)\in \widetilde{V}\times \widetilde{W}$ there is a curve $\gamma_{(x^\prime,t^\prime)}:[0,1]\longrightarrow U$, satisfying:

\begin{itemize}
    \item $\gamma_{(x^\prime,t^\prime)}(0)=(x^\prime,t^\prime)$;\\
    \item $Z(\gamma_{(x^\prime,t^\prime)}(\sigma))=Z(x^\prime,t^\prime)$, for every $0\leq\sigma\leq 1$;\\
    \item $\gamma_{(x^\prime,t^\prime)}(1)\in V_1\times W_0$;\\
    \item There exists $C_1>0$ such that 

\begin{equation*}
    \int_0^1\|\gamma^\prime_{(x^\prime,t^\prime)}(\sigma)\|\mathrm{d}\sigma\leq C_1,
\end{equation*}

\noindent for every $(x^\prime,t^\prime)\in \widetilde{V}\times \widetilde{W}$.
\end{itemize}

\noindent Now let $(x^\prime,t^\prime)\in\widetilde{V}\times\widetilde{W}$ be fixed. We write $\gamma_{(x^\prime,t^\prime)}(\sigma)=(\gamma^{(1)}_{(x^\prime,t^\prime)}(\sigma),\gamma^{(2)}_{(x^\prime,t^\prime)}(\sigma))$. By Stokes theorem we have that

\begin{align*}
    \mathfrak{F}[\chi u](t^\prime;Z(x^\prime,t^\prime),\xi)-&\mathfrak{F}[\chi u](\gamma^{(2)}_{(x^\prime,t^\prime)}(1), Z(\gamma_{(x^\prime,t^\prime)}(1)),\xi)=\\
    &=\int_0^1\frac{\partial}{\partial \sigma}\mathfrak{F}[\chi u](\gamma^{(2)}_{(x^\prime,t^\prime)}(\sigma);Z(x^\prime,t^\prime),\xi)\mathrm{d}\sigma\\
    &=\int_0^1\sum_{j=1}^n\mathfrak{F}[\mathrm{L}_j(\chi u)](\gamma^{(2)}_{(x^\prime,t^\prime)}(\sigma);Z(x^\prime,t^\prime),\xi)\frac{\mathrm{d}}{\mathrm{d}\sigma}{\gamma^{(2)}_{(x^\prime,t^\prime)}}_j(\sigma)\mathrm{d}\sigma\\
    &=\int_0^1\sum_{j=1}^n\mathfrak{F}[u\mathrm{L}_j\chi ](\gamma^{(2)}_{(x^\prime,t^\prime)}(\sigma);Z(x^\prime,t^\prime),\xi)\frac{\mathrm{d}}{\mathrm{d}\sigma}{\gamma^{(2)}_{(x^\prime,t^\prime)}}_j(\sigma)\mathrm{d}\sigma+\\
    &+\int_0^1\sum_{j=1}^n\mathfrak{F}[\chi\mathrm{L}_ju](\gamma^{(2)}_{(x^\prime,t^\prime)}(\sigma);Z(x^\prime,t^\prime),\xi)\frac{\mathrm{d}}{\mathrm{d}\sigma}{\gamma^{(2)}_{(x^\prime,t^\prime)}}_j(\sigma)\mathrm{d}\sigma.
\end{align*}

\noindent Now we analyze these two terms separately. First we note that $\mathrm{L}_j\chi$ vanishes on $B_\rho(0)$, for $j=1,\dots, n$ therefore, by the previous lemma, we have that there exist $C,\epsilon_3>0$ such that

\begin{equation*}
     |\mathfrak{F}[u\mathrm{L}_j\chi ](t;Z(x,t),\xi)|\leq Ce^{-\epsilon_3|\xi|},\quad \forall x\in B_{\rho/2}(0),\,t\in W,\, \xi\in\mathbb{R}^m,\,j=1,\dots, n.
\end{equation*}

\noindent Therefore 

\begin{equation*}
     |\mathfrak{F}[u\mathrm{L}_j\chi ](\gamma^{(2)}_{(x^\prime,t^\prime)}(\sigma);Z(x^\prime,t^\prime),\xi)|\leq Ce^{-\epsilon_3|\xi|},\quad  0\leq\sigma\leq 1,\, \forall\xi\in\mathbb{R}^m,\,j=1,\dots, n.
\end{equation*}

\noindent In view of \eqref{eq:FBI-chiLu} we also have that

\begin{equation*}
     |\mathfrak{F}[\chi\mathrm{L}_ju](\gamma^{(2)}_{(x^\prime,t^\prime)}(\sigma);Z(x^\prime,t^\prime),\xi)|\leq Ce^{-\epsilon_1|\xi|^\frac{1}{s}},\quad  0\leq\sigma\leq 1,\, \forall\xi\in\mathbb{R}^m,\,j=1,\dots, n.
\end{equation*}

\noindent Summing up we have obtained

\begin{align*}
    |\mathfrak{F}[\chi u](t^\prime;Z(x^\prime,t^\prime),\xi)|&\leq |\mathfrak{F}[\chi u](\gamma^{(2)}_{(x^\prime,t^\prime)}(1), Z(\gamma_{(x^\prime,t^\prime)}(1)),\xi)|+ Ce^{-\epsilon_4|\xi|^\frac{1}{s}}\\
    &\leq Ce^{-\epsilon|\xi|^\frac{1}{s}}
\end{align*}

\noindent for every $(x^\prime,t^\prime)\in \widetilde{V}\times\widetilde{W}$, since $\gamma_{(x^\prime,t^\prime)}(1)\in V_1\times W_0$, where $\epsilon_4=\min\{\epsilon_1,\epsilon_3\}$ and $\epsilon=\min\{\epsilon_2,\epsilon_4\}$. So we conclude that for $p\in\Sigma$ there exists a neighborhood $\mathcal{U}\Subset U$, such that if $p\notin \textrm{singsupp}_s\,u$, then $\Sigma\cap \mathcal{U}\subset\complement\,\textrm{singsupp}_s\,u$. Moreover, since the map $\Sigma\ni p\mapsto\sup\{r>0\;:\;B_r(p)\subset U\}$ is continuous, the same can be assumed for the map $\Sigma\ni p\mapsto \sup\{r>0\;:\;B_r(p)\subset \mathcal{U}\}$. To indicate the dependence of $p$ in $\mathcal{U}$, we shall write $\mathcal{U}=\mathcal{U}(p)$. Now we claim that $\Sigma\cap\,\textrm{singsupp}_s\,u$ is an open set. So take $\{p_k\}_{k\in\mathbb{Z}_+}$ a sequence on $\Sigma\cap\complement\,\textrm{singsupp}_s\,u$, such that $p_k\rightarrow p\in\Sigma$.
Now there exists $\delta>0$ such that the open set $\mathcal{U}(p_k)$, as described above, contains a ball, centered at $p_k$, of radius at least $\delta$, for every $k$. So there exists $k>0$ such that $p\in {\mathcal{U}}(p_k)$. Since $p_k\notin \textrm{singsupp}_s\,u$ we have that $\Sigma\cap \mathcal{U}(p_k)\subset\complement\,\textrm{singsupp}_s\,u$, \textit{i.e}, $p\notin\textrm{singsupp}_s u$. Clearly $\Sigma\cap\,\textrm{singsupp}_s u$ is closed. Therefore $\Sigma\cap\,\textrm{singsupp}_s u=\emptyset$ or $\Sigma\subset\textrm{singsupp}_s u$.

\end{proof}

\section{Examples}

Consider in $\mathbb{R}^2$ a real valued, real-analytic function $\phi$ satisfying $\phi(0)=0$, and consider in $\mathrm{R}^3$ the structure $\mathcal{V}$ defined by the complex vector fields

\begin{align*}
    &\mathrm{L}_1= \frac{\partial}{\partial t_1}-i2\phi(t_1,t_2)\frac{\partial \phi}{\partial t_1}(t_1,t_2)\frac{\partial}{\partial x},\\
    &\mathrm{L}_2= \frac{\partial}{\partial t_2}-i2\phi(t_1,t_2)\frac{\partial \phi}{\partial t_2}(t_1,t_2)\frac{\partial}{\partial x}.
\end{align*}

\noindent The first integral for these complex vector fields is given by

\begin{equation*}
    Z(x,t_1,t_2)=x+i\phi(t_1,t_2)^2,
\end{equation*}

\noindent and the characteristic set $\mathrm{T}^0$ is equal to 

\begin{equation*}
   \mathrm{T}^0= \{(x,t_1,t_2,\xi,\eta_1,\eta_2)\in \mathbb{R}^3\times \big(\mathbb{R}^3\setminus 0\big)\;:\;\phi(t_1,t_2)=0\;\text{or}\;\nabla \phi(t_1,t_2)=0\}.
\end{equation*}

\noindent Now suppose that the collection $\Sigma_\alpha=\phi^{-1}(\alpha)$ forms a folliation of a neighborhood of $\Sigma_0$ on $\mathbb{R}^2$ by connected, smooth curves. So we can apply our Theorem \ref{thm:propagation-Gevrey} for this structure $\mathcal{V}$ and for $\{x_0\}\times\Sigma_0$, for every $x_0\in\mathbb{R}$. Note that in this example it is important that $\{x_0\}\times\Sigma_0$ is contained in the base projection of $\mathrm{T}^0$, otherwise the structure would be elliptic on $\{x_0\}\times\Sigma_0$, and we would not need to use our theorem in this case. Now we give some simple examples of such functions $\phi$:

\begin{enumerate}
    \item $\phi(t_1,t_2)=(t_1-1)^2+(t_2-1)^2-2$;\\
    \item $\phi(t_1,t_2)=t_1-t_2$;\\
    \item $\phi(t_1,t_2)=(t_1+1)(t_2+1)-1$.
\end{enumerate}

\section*{Acknowledgements}

I wish to express my gratitude to Prof. Paulo D. Cordaro for his careful guidance during my Ph.D., and I also wish to thank the reserach group at the University of S\~ao Paulo (S\~ao Paulo and S\~ao Carlos), and at the Federal University of S\~ao Carlos for the helpful seminars and conversations, and in especial  Luis F. Ragognette for his careful reading of the preprint. Finally I wish to thank CNPq for the financial support.

\end{document}